\documentclass[oneside,reqno]{amsart}
\usepackage[foot]{amsaddr} 
\usepackage[utf8]{inputenc}
\usepackage{amsmath}
\usepackage{amsfonts}
\usepackage{amssymb}
\usepackage{setspace}
\usepackage{amssymb}
\usepackage[shortlabels]{enumitem}
\usepackage{xcolor}
\usepackage{graphicx}
\usepackage{appendix}
\usepackage{amsthm}
\newtheorem{thm}{Theorem}[section]
\newtheorem{df}{Definition}[section]

\numberwithin{equation}{section}
\usepackage{bbm}
\newtheorem{rmk}{Remark}[section]

\newtheorem{prop}{Proposition}[section]
\newtheorem{lm}{Lemma}[section]

\usepackage{geometry}
\usepackage{fancyhdr}
\cfoot{}
\usepackage{hyperref}
\hypersetup{
    colorlinks=true,
    linkcolor=blue,
    filecolor=magenta,      
    urlcolor=blue,
    pdftitle={Overleaf Example},
    citecolor=red,
    pdfpagemode=FullScreen,
    }
\usepackage{mfirstuc}
\makeatletter
\def\@setauthors{%
  \begingroup
  \def\thanks{\protect\thanks@warning}%
  \trivlist
  \centering\footnotesize \@topsep30\p@\relax
  \advance\@topsep by -\baselineskip
  \item\relax
  \author@andify\authors
  \def\\{\protect\linebreak}%
  \authors%
  \ifx\@empty\contribs
  \else
    ,\penalty-3 \space \@setcontribs
    \@closetoccontribs
  \fi
  \endtrivlist
  \endgroup
}
\date{}
\begin{document}
\title[Controllability for linear stochastic parabolic equations]{Null controllability for stochastic parabolic equations with Robin boundary conditions}

\author[S. Boulite]{\large Said Boulite$^{1}$ }
\address{$^{1}$ Cadi Ayyad University, National School of Applied Sciences, LMDP, UMMISCO (IRD-UPMC), B.P. 575, Marrakesh, Morocco}
\email{s.boulite@uca.ma}	
\author[A. Elgrou]{\large Abdellatif Elgrou$^{2,\ast}$}
\thanks{\texttt{$^{\ast}$ Corresponding author}}
\address{$^{2}$ Cadi Ayyad University, Faculty of Sciences Semlalia, LMDP, UMMISCO (IRD-UPMC), B.P. 2390, Marrakesh, Morocco}
\email{abdellatif.elgrou@ced.uca.ma}
\author[L. Maniar]{\large Lahcen Maniar$^{2}$}
\email{maniar@uca.ma}

\dedicatory{ \large Dedicated to the memory of Professor Hammadi Bouslous}
\begin{abstract}

We establish the null controllability of forward and backward linear stochastic parabolic equations with linear Robin (or Fourier) boundary conditions. These equations incorporate zero and first order terms with bounded coefficients. To prove our null controllability results, a key tool will be the derivation of two new global Carleman estimates for the weak solutions of the corresponding adjoint equations in negative Sobolev space. These Carleman estimates are established using a duality method.
\end{abstract}
\maketitle
\smallskip
\noindent \textbf{\it Keywords:} {Stochastic parabolic equations, Carleman estimates, Controllability, Observability, Robin boundary conditions.} 

\section{Introduction and main results}
Let $T>0$ and $(\Omega,\mathcal{F},\mathbf{F},\mathbb{P})$ be a fixed complete filtered probability space on which a one-dimensional standard Brownian motion $W(\cdot)$ is defined such that $\mathbf{F}=\{\mathcal{F}_t\}_{t\in[0,T]}$ is the natural filtration generated by $W(\cdot)$ and augmented by all the $\mathbb{P}$-null sets in $\mathcal{F}$. Let $\mathcal{X}$ be a Banach space; we denote by $L^2_{\mathcal{F}_t}(\Omega;\mathcal{X})$ the Banach space of all $\mathcal{X}$-valued $\mathcal{F}_t$-measurable random variables $X$ such that $\mathbb{E}\big(\vert X\vert_\mathcal{X}^2\big)<\infty$, with the canonical norm. $L^2_\mathcal{F}(0,T;\mathcal{X})$ denotes the Banach space consisting of all $\mathcal{X}$-valued $\mathbf{F}$-adapted processes $X(\cdot)$ such that $\mathbb{E}\big(\vert X(\cdot)\vert^2_{L^2(0,T;\mathcal{X})}\big)<\infty$, with the canonical norm. $L^\infty_\mathcal{F}(0,T;\mathcal{X})$ represents the Banach space consisting of all $\mathcal{X}$-valued $\mathbf{F}$-adapted essentially bounded processes, with the canonical norm denoted simply by $|\cdot|_\infty$. $L^2_\mathcal{F}(\Omega;C([0,T];\mathcal{X}))$ indicates the Banach space consisting of all $\mathcal{X}$-valued $\mathbf{F}$-adapted continuous processes $X(\cdot)$ such that $\mathbb{E}\big(\vert X(\cdot)\vert^2_{C([0,T];\mathcal{X})}\big)<\infty$, with the canonical norm and $C([0,T];\mathcal{X})$ denotes the Banach space of all $\mathcal{X}$-valued continuous functions defined on $[0,T]$. Similarly, one can define $L^\infty_\mathcal{F}(\Omega;C^m([0,T];\mathcal{X}))$ for any positive integer $m$.

Let $G\subset\mathbb{R}^N$ ($N\geq1$) be a bounded domain with a $C^2$ boundary $\Gamma$, and $G_0\Subset G$ be a given non-empty open subset strictly contained in $G$ (i.e., $\overline{G_0}\subset G$ where $\overline{G_0}$ denotes the closure of $G_0$). We indicate by $\mathbbm{1}_{G_0}$ the characteristic function of $G_0$ and $dx$ (resp., $d\sigma$) designates the Lebesgue measure (resp., surface measure) in $G$ (resp., on $\Gamma$). Put 
$$Q=(0,T)\times G, \,\,\,\quad \Sigma=(0,T)\times\Gamma\,\,\quad \textnormal{and}\,\,\,\quad Q_0=(0,T)\times G_0.$$ 

Throughout this paper, we assume the following assumptions on the matrix $\mathcal{A}=(a^{jk})_{j,k=1,2,...,N}$:
 \begin{enumerate}[1)]
\item $\mathcal{A}\in L^\infty_\mathcal{F}(\Omega;C^1([0,T];W^{1,\infty}(G;\mathbb{R}^{N\times N})))$ and $a^{jk}=a^{kj}$\,\, for all \,\,$j,k=1,2,...,N$.
\item There exists a constant $c_0>0$ such that
$$\mathcal{A}\xi\cdot\xi\geq c_0\vert\xi\vert^2\quad\textnormal{ for all}\;\;\, (\omega,t,x,\xi)\in\Omega\times Q\times\mathbb{R}^N,$$
where ``\,$\cdot$\,'' is the canonical inner product in $\mathbb{R}^N$ and $|\cdot|$ denotes its associated norm.
\end{enumerate}

The first goal of this paper is to study the null controllability of the following forward stochastic parabolic equation with Robin boundary conditions
\begin{equation}\label{1.1}
\begin{cases}
\begin{array}{ll}
dy - \nabla\cdot(\mathcal{A}\nabla y) \,dt = (a_1 y+B_1\cdot\nabla y +\mathbbm{1}_{G_0}u) \,dt+ (a_2y+B_2\cdot\nabla y+v) \,dW(t) &\textnormal{in}\,\,Q,\\
\partial^\mathcal{A}_\nu y+\beta y=0 &\textnormal{on}\,\,\Sigma,\\
				y(0)=y_0 &\textnormal{in}\,\,G,
			\end{array}
		\end{cases}
\end{equation}
where the coefficients $a_1$, $a_2$, $B_1$, $B_2$ and $\beta$ are assumed to be
$$a_1, a_2\in L_\mathcal{F}^\infty(0, T; L^\infty(G)), \quad B_1, B_2\in L_\mathcal{F}^\infty(0, T;L^\infty(G;\mathbb{R}^N)), \quad \beta\in L_\mathcal{F}^\infty(0, T; L^\infty(\Gamma)).$$
The function $y_0\in L^2(G)$ is the initial state and the control variable consists of the pair 
$$(u,v)\in L^2_\mathcal{F}(0,T;L^2(G_0))\times L^2_\mathcal{F}(0,T;L^2(G)).$$ 
Here, $\partial^\mathcal{A}_\nu y=(\mathcal{A}\nabla y\cdot\nu)|_\Sigma$ denotes the conormal derivative of $y$ where $\nu$ is the outer unit normal vector on $\Gamma$. In what follows, $C$ represents a positive constant depending only on $G$, $G_0$, and $\mathcal{A}$, which may vary from one place to another.

From \cite{yan2018carleman}, one can show that \eqref{1.1} is well-posed i.e., for any initial state
$y_0\in L^2(G)$ and controls $(u,v)\in L^2_\mathcal{F}(0,T;L^2(G_0))\times L^2_\mathcal{F}(0,T;L^2(G))$, the equation \eqref{1.1} has a unique weak solution 
$$y\in L^2_\mathcal{F}(\Omega;C([0,T];L^2(G)))\bigcap L^2_\mathcal{F}(0,T;H^1(G)),$$
that depends continuously on $y_0$, $u$ and $v$.

The first main result of this paper concerns the null controllability of equation \eqref{1.1}.
\begin{thm}\label{thm01.1}
For any initial state $y_0\in L^2(G)$, there exist controls $$(\widehat{u},\widehat{v})\in L^2_\mathcal{F}(0,T;L^2(G_0))\times L^2_\mathcal{F}(0,T;L^2(G)),$$
such that the corresponding solution $\widehat{y}$ of \eqref{1.1} satisfies that $$\widehat{y}(T,\cdot) = 0\,\,\,\; \textnormal{in}\,\,G,\quad\mathbb{P}\textnormal{-a.s.}$$
Moreover, controls $\widehat{u}$ and $\widehat{v}$ can be chosen so that
\begin{align}\label{1.2201}
\begin{aligned}
\vert \widehat{u}\vert^2_{L^2_\mathcal{F}(0,T;L^2(G))}+\vert \widehat{v}\vert^2_{ L^2_\mathcal{F}(0,T;L^2(G))}\leq e^{CK}\,\vert y_0\vert^2_{L^2(G)},
\end{aligned}
\end{align}
where the constant $K$ has the following form
\begin{align*}
K\equiv&\,\,1+\frac{1}{T}+\vert a_1\vert_\infty^{2/3}+\vert a_2\vert_\infty^{2/3}+\vert \beta\vert_\infty^2+\vert B_1\vert_\infty^2+\vert B_2\vert_\infty^2\\
&+T\big(1+\vert a_1\vert_\infty+\vert a_2\vert_\infty^{2}+\vert \beta\vert_\infty^2+\vert B_1\vert_\infty^2\big).
\end{align*}
\end{thm}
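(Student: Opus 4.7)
My plan is the classical duality/Hilbert Uniqueness approach. By a penalisation argument, the null controllability of \eqref{1.1} together with the quantitative cost \eqref{1.2201} is equivalent to the observability inequality
\begin{equation*}
\mathbb{E}\,|z(0)|^2_{L^2(G)} \;\le\; e^{CK}\,\Bigl(\mathbb{E}\!\int_{Q_0}|z|^2\,dx\,dt + \mathbb{E}\!\int_Q |Z|^2\,dx\,dt\Bigr)
\end{equation*}
for every weak solution $(z,Z)$ of the adjoint backward stochastic parabolic equation associated with \eqref{1.1}: a backward SPDE for $z$ with martingale component $Z$, carrying the reversed zero- and first-order coefficients (formally $-a_1 z+\nabla\cdot(B_1 z)-a_2 Z+\nabla\cdot(B_2 Z)$ in the drift) and a Robin-type boundary condition adapted to absorb $\beta$ and the normal trace of $B_1$. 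Once this observability inequality is proved with the announced constant $e^{CK}$, minimising a penalised quadratic functional
\begin{equation*}
J_\varepsilon(z_T)=\tfrac12\,\mathbb{E}\!\int_{Q_0}|z|^2+\tfrac12\,\mathbb{E}\!\int_Q|Z|^2+\varepsilon\,\mathbb{E}|z_T|^2_{L^2(G)}+\mathbb{E}\langle z(0),y_0\rangle_{L^2(G)}
\end{equation*}
over $z_T\in L^2_{\mathcal{F}_T}(\Omega;L^2(G))$ and passing to $\varepsilon\to 0$ yields controls $(\widehat u,\widehat v)=(\mathbbm{1}_{G_0}\widehat z,\widehat Z)$ satisfying \eqref{1.2201}; this reduction is standard.

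The heart of the proof is therefore the observability inequality, which I would extract from a global Carleman estimate for the weak solutions of the adjoint system with Fursikov--Imanuvilov weights
\begin{equation*}
\alpha(t,x)=\frac{e^{2\lambda|\psi|_\infty}-e^{\lambda\psi(x)}}{t(T-t)},\qquad \varphi(t,x)=\frac{e^{\lambda\psi(x)}}{t(T-t)},\qquad \theta=e^{s\alpha},
\end{equation*}
where $\psi\in C^2(\overline G)$ is an auxiliary function with $|\nabla\psi|>0$ on $\overline{G\setminus G_0}$ and sign condition on $\partial_\nu\psi|_\Gamma$ compatible with the Robin boundary data, so that the boundary terms produced by the Carleman identity come with a usable sign. \textbf{The main technical obstacle --- and the announced novelty --- is the first-order term $B_2\cdot\nabla y$ in the diffusion of \eqref{1.1}.} Its adjoint contribution $\nabla\cdot(B_2 Z)$ only makes sense in $H^{-1}(G)$, so a direct pointwise It\^o--Carleman computation on $(z,Z)$ breaks down and the Carleman estimate must be obtained for weak solutions in negative Sobolev space. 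I would proceed exactly as the abstract advertises, via the duality method: apply a classical $H^1$-type Carleman estimate with Robin boundary conditions to an auxiliary forward stochastic test equation, and dualise the resulting inequality to produce the required weighted estimate on $(z,Z)$.

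To close, I would track the dependence of the Carleman constant on the lower-order coefficients. The standard absorption thresholds are $s\ge C(T+T^2)$ for the time-weight residuals; $s\ge CT^2|a_1|_\infty^{2/3}$ and $s\ge CT^2|a_2|_\infty^{2/3}$ to absorb the zero-order perturbations into the dominant $s^3\lambda^4\varphi^3|\theta z|^2$ integrand (explaining the $|a_i|_\infty^{2/3}$ contributions to $K$); and $s\ge CT^2(|B_1|_\infty^2+|B_2|_\infty^2+|\beta|_\infty^2)$ for the first-order and boundary perturbations, where $|\beta|_\infty^2$ arises from a trace/Young inequality applied to the Robin boundary integral. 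Combining the Carleman estimate restricted to an interior subinterval $[T/4,3T/4]$ with a backward $L^2$-energy dissipation estimate to transfer the bound from $z(T/2)$ to $z(0)$ yields an observability constant of the form $e^{CK}$ with precisely the combination $K$ in the statement. Substituting into the duality step of the first paragraph delivers the controls claimed in Theorem~\ref{thm01.1}.
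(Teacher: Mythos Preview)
Your proposal is correct and follows essentially the same strategy as the paper: observability for the adjoint backward equation via a duality-based Carleman estimate (precisely to handle the $H^{-1}$ source $\nabla\cdot(B_2 Z)$), the same absorption thresholds in $\lambda$ producing the constant $K$, the same restriction to $[T/4,3T/4]$ combined with an energy/Gronwall step, and a penalised HUM argument to extract the controls. The only cosmetic difference is that you penalise over the adjoint terminal datum $z_T$ whereas the paper penalises over the control pair $(u,v)$; the two formulations are Fenchel duals of one another and lead to the same optimality system with $(\widehat u,\widehat v)=(-\mathbbm{1}_{G_0}z,-Z)$.
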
 

The second goal of this paper is to prove the null controllability of the following backward stochastic parabolic equation with Robin boundary conditions
	\begin{equation}\label{1.2}
		\begin{cases}
			\begin{array}{ll}
			dy+\nabla\cdot(\mathcal{A}\nabla y) \,dt = (a_1y+a_2Y+B\cdot\nabla y+\mathbbm{1}_{G_0}u)\,dt + Y \,dW(t)	 &\textnormal{in}\,\,Q,\\
\partial^\mathcal{A}_\nu y+\beta y=0
&\textnormal{on}\,\,\Sigma,\\
			y(T)=y_T &\textnormal{in}\,\,G,
			\end{array}
		\end{cases}
	\end{equation}
where $y_T\in L^2_{\mathcal{F}_T}(\Omega;L^2(G))$ is the terminal state, $u\in L^2_\mathcal{F}(0,T;L^2(G_0))$ is the control process and the coefficients $a_1$, $a_2$, $B$ and $\beta$ satisfy the following assumptions
$$a_1, a_2\in L^\infty_\mathcal{F}(0,T;L^\infty(G)), \quad B\in L^\infty_\mathcal{F}(0,T;L^\infty(G;\mathbb{R}^N)), \quad \beta\in L_\mathcal{F}^\infty(0, T; L^\infty(\Gamma)).$$

Similarly to \eqref{1.1}, one can also demonstrate that \eqref{1.2} is well-posed i.e., for any $y_T\in L^2_{\mathcal{F}_T}(\Omega;L^2(G))$ and $u\in L^2_\mathcal{F}(0,T;L^2(G_0))$, there exists a unique weak solution $(y,Y)$ of \eqref{1.2} so that
$$(y,Y)\in \Big(L^2_\mathcal{F}(\Omega;C([0,T];L^2(G)))\bigcap L^2_\mathcal{F}(0,T;H^1(G))\Big)\times L^2_\mathcal{F}(0,T;L^2(G)),$$
which depends continuously on the data $y_T$ and $u$.

The second main result of this paper is the following null controllability of equation \eqref{1.2}.
\begin{thm}\label{thm1.2}
For any terminal state $y_T\in L^2_{\mathcal{F}_T}(\Omega;L^2(G))$, there exists a control $\widehat{u}\in L^2_\mathcal{F}(0,T;L^2(G_0))$ such that the corresponding solution $(\widehat{y},\widehat{Y})$ of \eqref{1.2} fulfills that  $$\widehat{y}(0,\cdot) = 0\,\,\,\; \textnormal{in}\,\,G,\quad\mathbb{P}\textnormal{-a.s.}$$ Furthermore, the control $\widehat{u}$ can be chosen in such a way that
\begin{equation}\label{1.44012}
\vert \widehat{u}\vert^2_{L^2_\mathcal{F}(0,T;L^2(G))}\leq e^{CM}\,\vert y_T\vert^2_{L^2_{\mathcal{F}_T}(\Omega;L^2(G))},
\end{equation}
where the constant $M$ is given by
$$M\equiv1+\frac{1}{T}+\vert a_1\vert_\infty^{2/3}+\vert a_2\vert_\infty^2+\vert \beta\vert_\infty^2+\vert B\vert_\infty^2+T\big(1+\vert a_1\vert_\infty+\vert a_2\vert_\infty^2+\vert \beta\vert_\infty^2+\vert B\vert_\infty^2\big).
$$
\end{thm}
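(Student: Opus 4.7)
The plan is the classical duality method: reduce the null controllability of \eqref{1.2} to an observability inequality for its adjoint forward SPDE, prove that inequality via the paper's global Carleman estimate in a negative Sobolev space, and then construct the control by the Hilbert Uniqueness Method.

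First, I would set up the adjoint. Letting $z$ solve a forward parabolic SPDE with Robin-type boundary condition and applying Itô's formula to $\langle y(t),z(t)\rangle_{L^2(G)}$ together with integration by parts in space, I would identify the coefficients of the adjoint so that the duality reduces to the control term: the cross-variation $\int_G YZ\,dx$ cancels the $a_2 Y$ term through the choice $Z=-a_2 z$, and the $B\cdot\nabla y$ term integrates by parts to produce the drifts $B\cdot\nabla z+(\nabla\cdot B)z$, with the boundary flux $B\cdot\nu$ absorbed into a shifted Robin parameter. Formally the adjoint reads
\begin{equation*}
\begin{cases}
dz-\nabla\cdot(\mathcal A\nabla z)\,dt=\bigl(-a_1 z+B\cdot\nabla z+(\nabla\cdot B)z\bigr)dt-a_2 z\,dW(t) & \text{in }Q,\\
\partial^{\mathcal A}_\nu z+(\beta+B\cdot\nu)z=0 & \text{on }\Sigma,\\
z(0)=z_0 & \text{in }G,
\end{cases}
\end{equation*}
and, since $B\in L^\infty$ only, the term $(\nabla\cdot B)z$ must be read distributionally, which is exactly what motivates the paper's Carleman estimate for weak solutions with source in a negative Sobolev space. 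When $\widehat y(0)=0$, the resulting duality identity reduces to $\mathbb E\langle y_T,z(T)\rangle_{L^2(G)}=\mathbb E\int_{Q_0}\widehat u\,z\,dx\,dt$, so it suffices to establish the observability inequality
\begin{equation*}
\mathbb E|z(T)|^2_{L^2(G)}\leq e^{CM}\,\mathbb E\int_{Q_0}|z|^2\,dx\,dt\qquad\text{for all }z_0\in L^2(G).
\end{equation*}

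Second, this inequality would follow by applying the Carleman estimate on a middle subinterval such as $[T/4,3T/4]$, absorbing the lower-order terms into the left-hand side for sufficiently large Carleman parameters, and then propagating the weighted bound forward to $t=T$ by a standard dissipation estimate for the adjoint equation. The prescribed form of the exponent $M$ is obtained by tracking $|a_1|_\infty$, $|a_2|_\infty$, $|\beta|_\infty$ and $|B|_\infty$ through these two steps together with the scaling of the Carleman parameters.

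Third, I would construct $\widehat u$ by HUM. Observability makes the strictly convex, continuous, quadratic functional
\begin{equation*}
J(z_0)=\tfrac12\,\mathbb E\int_{Q_0}|z|^2\,dx\,dt+\mathbb E\langle y_T,z(T)\rangle_{L^2(G)}
\end{equation*}
coercive on $L^2(G)$, hence it admits a unique minimizer $\widehat z_0$; the first-order optimality condition combined with the duality identity above then shows that $\widehat u=-\mathbbm{1}_{G_0}\widehat z$ is a null control for \eqref{1.2} and that $|\widehat u|^2_{L^2_\mathcal F(0,T;L^2(G))}\leq e^{CM}|y_T|^2_{L^2_{\mathcal F_T}(\Omega;L^2(G))}$, which is exactly \eqref{1.44012}. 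The main difficulty of the proof is therefore concentrated in the negative Sobolev Carleman estimate needed to accommodate the first-order term $B\cdot\nabla y$ under duality; once that estimate is available with sharp coefficient dependence, the HUM/duality conclusion is routine.
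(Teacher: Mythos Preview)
Your overall strategy---identify the forward adjoint \eqref{1.5}, prove the observability inequality $\mathbb{E}|z(T)|^2_{L^2(G)}\le e^{CM}\mathbb{E}\int_{Q_0}z^2\,dx\,dt$ via the Carleman estimate \eqref{3.202002} plus an energy (Gronwall) step, and conclude by HUM---is exactly the paper's route; your adjoint, written as $-a_1z+B\cdot\nabla z+(\nabla\cdot B)z$ with boundary coefficient $\beta+B\cdot\nu$, is the same as the paper's weak form $-a_1z+\nabla\cdot(zB)$ with $\partial_\nu^{\mathcal A}z+\beta z+zB\cdot\nu=0$.

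The one place where you diverge from the paper---and where your write-up has a small gap---is the HUM step. You minimize $J(z_0)=\tfrac12\mathbb{E}\int_{Q_0}z^2+\mathbb{E}\langle y_T,z(T)\rangle$ and assert that observability makes $J$ coercive on $L^2(G)$. But the observability inequality controls $\mathbb{E}|z(T)|^2$, not $|z_0|^2$, by the observation norm; so $J$ is bounded below and coercive only with respect to $\|z_0\|_{\mathrm{obs}}:=(\mathbb{E}\int_{Q_0}z^2)^{1/2}$, which need not dominate $|z_0|_{L^2(G)}$. The standard fix is either to minimize on the completion of $L^2(G)$ in $\|\cdot\|_{\mathrm{obs}}$, or to do what the paper does: run a \emph{penalized} HUM on the control space, i.e.\ minimize $J_\varepsilon(u)=\tfrac12\mathbb{E}\int_Q u^2+\tfrac{1}{2\varepsilon}\mathbb{E}|y(0)|^2_{L^2(G)}$, characterize the minimizer as $u_\varepsilon=-\mathbbm{1}_{G_0}z_\varepsilon$ with $z_\varepsilon$ solving \eqref{1.5} for $z_\varepsilon(0)=\varepsilon^{-1}y_\varepsilon(0)$, use observability to get $\|u_\varepsilon\|^2+\varepsilon^{-1}\mathbb{E}|y_\varepsilon(0)|^2\le e^{CM}|y_T|^2$, and pass to weak limits. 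This avoids the completion issue entirely and is the argument the paper actually carries out (for Theorem~\ref{thm01.1}, declared analogous for Theorem~\ref{thm1.2}).
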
 
\begin{rmk}
Equation \eqref{1.1} $($with $u\equiv v\equiv 0$$)$ $($respectively, equation \eqref{1.2} with $u\equiv 0$$)$ describe various physical phenomena influenced by stochastic factors and boundary constraints. One such example is the heat equation with stochastic perturbations and Robin boundary conditions, representing the evolution of the temperature distribution $y$ in a medium over time, accounting for random fluctuations and boundary interactions. The term ``$-\partial_\nu^{\mathcal{A}} y$'' at the boundary denotes the inward-directed normal heat flux, possibly scaled by a positive coefficient. Specifically, one may suppose that ``$\beta \geq 0$''; however, this assumption is not required in this paper. Hence, the equation ``$-\partial_\nu^{\mathcal{A}} y = \beta y$'' signifies a linear relationship between this flux and the temperature. For more information regarding the physical model described by stochastic partial differential equations such as \eqref{1.1} and \eqref{1.2}, we refer to \cite[Chapter 5]{lu2021mathematical} and the references cited therein.
\end{rmk}
\begin{rmk}
Inequalities \eqref{1.2201} and \eqref{1.44012} provide an estimate of the cost of the control for systems \eqref{1.1} and \eqref{1.2}, respectively. For instance, for \eqref{1.1}: If Theorem \ref{thm01.1} holds, then we have that $$\mathcal{K}_T \leq \sqrt{e^{CK}},$$
where $\mathcal{K}_T=\mathcal{K}_T(G,G_0,T,a_1,a_2,B_1,B_2)$ defines the control cost for equation \eqref{1.1} at time $T$ i.e.,
$$\mathcal{K}_T=\sup_{|y_0|_{L^2(G)}=1}\,\,\inf_{(u,v)\in \Lambda_T(y_0)} |(u,v)|_{L^2_\mathcal{F}(0,T;L^2(G_0))\times L^2_\mathcal{F}(0,T;L^2(G))},$$
with
$$\Lambda_T(y_0)=\big\{(u,v)\in L^2_\mathcal{F}(0,T;L^2(G_0))\times L^2_\mathcal{F}(0,T;L^2(G)),\quad y(T,\cdot) = 0\,\,\,\; \textnormal{in}\,\,G,\quad\mathbb{P}\textnormal{-a.s.}\big\}.$$
\end{rmk}

In the literature, several results concerning the controllability of deterministic parabolic equations are available. For instance, see \cite{fernandez2006null,fursikov1996controllability,imanuvilov2003carleman,lions1972some}. 
On the other hand, for progress on the controllability of stochastic parabolic equations, we refer the reader to \cite{barbu2003carleman,Preprintelgrou23,SPEwithDBC,liu2014global,lu2011some,fourthorder,lu2021mathematical,tang2009null,yan2018carleman} and the references cited therein. The initial findings regarding the controllability of certain stochastic heat equations are attributed to \cite{barbu2003carleman}. In contrast, the controllability of the general case of both forward and backward stochastic parabolic equations was achieved in \cite{tang2009null} by deriving new global Carleman estimates for backward and forward equations using a weighted identity method. While the previous work addresses the controllability of forward equations with an extra control on the diffusion part, using the spectral method, the author in \cite{lu2011some} establishes the controllability with only one control on the drift when the coefficients are space-independent. In \cite{liu2014global}, an improved Carleman estimate for forward stochastic parabolic equations is found using the duality method. This estimate is utilized to prove the controllability of backward stochastic parabolic equations incorporating zero-order terms with bounded coefficients. For a more extensive analysis of the controllability of forward and backward stochastic parabolic equations, including cases with zero and first order terms with only bounded coefficients, we refer to \cite{Preprintelgrou23}. We remark that all the previous papers deal with the controllability of stochastic parabolic equations with Dirichlet boundary conditions. To the best of our knowledge, \cite{yan2018carleman} is the only paper dealing with Carleman estimates and controllability for stochastic parabolic equations with Robin boundary conditions. The controllability results in \cite{yan2018carleman} are established considering only zero order terms. In the present work, we extend these controllability results to include both zero and first order terms with only bounded coefficients. This extension requires deriving new global Carleman estimates with some source terms in negative Sobolev space.\\

For any parameters $\lambda, \mu\geq1$, we define the following weight functions: For all $(t,x)\in Q$
\begin{equation}\label{2.2012}
\begin{array}{l}
\alpha=\alpha(t,x) = \displaystyle\frac{e^{\mu\psi(x)}-e^{2\mu\vert\psi\vert_\infty}}{t(T-t)},\qquad\;\varphi=\varphi(t,x) = \displaystyle\frac{e^{\mu\psi(x)}}{t(T-t)},\qquad\; \theta=e^{\lambda\alpha},
\end{array}
\end{equation}
where $\psi$ is a function satisfying that
\begin{align}\label{psicond}
\begin{aligned}
\psi\in C^2(\overline{G}),\quad\quad\psi(x)>0 \,\,\;\textnormal{in}\,\,G,\quad\quad&\psi(x)=0\,\,\;\textnormal{on}\,\,\Gamma,\\
\vert\nabla\psi(x)\vert>0\,\;\,\textnormal{in}\,\,\overline{G\setminus G_1},
\end{aligned}
\end{align}
with $G_1\Subset G_0$ is a nonempty open set. The existence of $\psi$ satisfying \eqref{psicond} is proved in \cite{fursikov1996controllability}.
 
We first consider the following general adjoint backward equation 
\begin{equation}\label{202.3}
\begin{cases}
\begin{array}{ll}
dz + \nabla\cdot(\mathcal{A}\nabla z) \,dt = (F_1+\nabla\cdot F) \,dt + Z \,dW(t) &\textnormal{in}\,\,Q,\\
\partial^\mathcal{A}_\nu z-F\cdot\nu=F_2 &\textnormal{on}\,\,\Sigma,\\
z(T)=z_T &\textnormal{in}\,\,G,
			\end{array}
		\end{cases}
\end{equation}
where $z_T\in L^2_{\mathcal{F}_T}(\Omega;L^2(G))$ and $F_1$, $F_2$ and $F$ are assumed to be 
$$F_1\in L^2_\mathcal{F}(0,T;L^2(G)), \quad F_2\in L^2_\mathcal{F}(0,T;L^2(\Gamma)) \quad\textnormal{and} \quad F\in L^2_\mathcal{F}(0,T;L^2(G;\mathbb{R}^N)).$$

To prove Theorem \ref{thm01.1}, we will derive the following Carleman estimate for equation \eqref{202.3}.
\begin{thm}\label{thm02.1}
There exists a large $\mu_0\geq1$ such that for $\mu=\mu_0$, one can find a positive constant $C$ depending only on $G$, $G_0$, $\mu_0$ and $\mathcal{A}$ so that for all $z_T\in L^2_{\mathcal{F}_T}(\Omega;L^2(G))$, $F_1\in L^2_\mathcal{F}(0,T;L^2(G))$, $F_2\in L^2_\mathcal{F}(0,T;L^2(\Gamma))$ and $F\in L^2_\mathcal{F}(0,T;L^2(G;\mathbb{R}^N))$, the weak solution $(z,Z)$ of \eqref{202.3} satisfies that 
\begin{align}\label{1.014}
\begin{aligned}
&\,\lambda^3\mathbb{E}\int_Q \theta^2\varphi^3z^2\,dxdt+\lambda\mathbb{E}\int_Q \theta^2\varphi\vert\nabla z\vert^2\,dxdt+\lambda^2\mathbb{E}\int_\Sigma \theta^2\varphi^2z^2\,d\sigma dt\\
&\leq  C\Bigg[\lambda^3\mathbb{E}\int_{Q_0} \theta^2\varphi^3z^2\,dxdt+\mathbb{E}\int_Q \theta^2 F_1^2\,dxdt+\lambda\mathbb{E}\int_\Sigma \theta^2\varphi F_2^2\,d\sigma dt\\
&\quad\quad\;\,+\lambda^2\mathbb{E}\int_Q \theta^2\varphi^2\vert F\vert^2\,dxdt+\lambda^2\mathbb{E}\int_Q \theta^2\varphi^2Z^2\,dxdt\Bigg],
\end{aligned}
\end{align}
for all $\lambda\geq C(T+T^2)$.
\end{thm}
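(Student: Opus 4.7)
The plan is to follow the \emph{duality method} of \cite{liu2014global,Preprintelgrou23}, adapted to the Robin/flux setting at hand. Rather than attack \eqref{1.014} directly by a weighted-identity argument on the backward equation \eqref{202.3} (which is poorly suited to the $H^{-1}$-type bulk source $\nabla\cdot F$ and the flux boundary data $F\cdot\nu$), I would first establish a Carleman estimate for a \emph{forward} stochastic parabolic equation with homogeneous Neumann boundary condition and $L^2$ source terms, and then transfer it to \eqref{202.3} via an It\^o pairing.

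Concretely, for any $h,H\in L^2_\mathcal{F}(0,T;L^2(G))$, let $p$ solve the forward auxiliary problem
\begin{equation*}
\begin{cases}
\begin{array}{ll}
dp - \nabla\cdot(\mathcal{A}\nabla p)\,dt = h\,dt + H\,dW(t) &\textnormal{in}\,\,Q,\\
\partial^\mathcal{A}_\nu p = 0 &\textnormal{on}\,\,\Sigma,\\
p(0)=0 &\textnormal{in}\,\,G.
\end{array}
\end{cases}
\end{equation*}
By the weighted-identity method (adapted from \cite{yan2018carleman} so as to include a boundary trace term of weight $\theta^2\varphi^2$), one derives a Carleman estimate controlling $\lambda^3\mathbb{E}\int_Q\theta^2\varphi^3 p^2+\lambda\mathbb{E}\int_Q\theta^2\varphi|\nabla p|^2+\lambda^2\mathbb{E}\int_\Sigma\theta^2\varphi^2 p^2$ by a local observation on $Q_0$ plus suitable weighted norms of $h$ and $H$. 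Applying It\^o's formula to $pz$, integrating by parts in the $\mathcal{A}\nabla\cdot$-terms and taking expectation, the boundary integral $\int_\Sigma p\,(F\cdot\nu)\,d\sigma$ generated by $\nabla\cdot F$ cancels exactly against the matching piece of the flux boundary condition on $z$, while $\partial^\mathcal{A}_\nu p=0$ eliminates the $z\,\partial^\mathcal{A}_\nu p$ contribution. This yields the clean duality identity
\begin{equation*}
\mathbb{E}\int_G p(T)z_T\,dx = \mathbb{E}\int_Q\bigl[\,zh + HZ + pF_1 - F\cdot\nabla p\,\bigr]\,dxdt - \mathbb{E}\int_\Sigma p\,F_2\,d\sigma dt.
\end{equation*}

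To recover \eqref{1.014}, I would apply this identity with three strategic source pairs $(h,H)$: $h=\lambda^3\theta^2\varphi^3 z$ reproducing the bulk term on the LHS, a source designed to yield $\lambda\theta^2\varphi|\nabla z|^2$ for the gradient term, and a boundary-concentrated source accounting for the trace term $\lambda^2\int_\Sigma\theta^2\varphi^2 z^2$. For each choice, use the forward Carleman estimate to bound $p,\nabla p,p|_\Sigma$ on the RHS, then apply Cauchy--Schwarz to transfer $\mathbb{E}\int_Q F\cdot\nabla p$ into $\lambda^2\mathbb{E}\int_Q\theta^2\varphi^2|F|^2$ (plus a small multiple of $\lambda\mathbb{E}\int_Q\theta^2\varphi|\nabla p|^2$ absorbable into the forward LHS), and treat the $pF_1,pF_2$ contributions analogously; the $HZ$ pairing is absorbed into $\lambda^2\mathbb{E}\int_Q\theta^2\varphi^2 Z^2$. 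Summing the three identities and taking $\lambda\geq C(T+T^2)$ to absorb lower-order errors into the dominant $\lambda^3\theta^2\varphi^3$ term gives \eqref{1.014}.

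The main technical obstacle is the pairing term $\mathbb{E}\int_G p(T)z_T\,dx$, which carries no natural Carleman weight since $\theta(T,\cdot)=0$; it must be handled either by running the argument with a cutoff Carleman weight $\theta_\varepsilon$ and passing to the limit, or by choosing the auxiliary sources $h,H$ supported on $[0,T-\delta]$ so that $p(T)$ is already controlled by the forward Carleman norm (at the cost of a $\delta$-loss absorbed into the $\lambda^3$ factor). A secondary difficulty is refining the forward Carleman estimate of the first step so as to carry the sharp boundary trace $\lambda^2\mathbb{E}\int_\Sigma\theta^2\varphi^2 p^2$, which is what allows the RHS contribution $\lambda\mathbb{E}\int_\Sigma\theta^2\varphi F_2^2$ to be produced; this requires a careful treatment of the conormal trace in the weighted-identity method that goes beyond the Dirichlet-case arguments of \cite{liu2014global}.
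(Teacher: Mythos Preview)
Your high-level strategy---the duality method of \cite{liu2014global}---is exactly what the paper uses, but your execution has two genuine gaps and one suboptimal choice.

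\textbf{Gap 1: the boundary trace of $z$.} With a \emph{homogeneous} Neumann condition on your auxiliary forward equation, the It\^o pairing never produces a term of the form $\int_\Sigma z\cdot(\text{anything})$; no bulk source $(h,H)$, however ``boundary-concentrated'', can manufacture one. Hence you cannot recover $\lambda^2\mathbb{E}\int_\Sigma\theta^2\varphi^2 z^2$ on the left of \eqref{1.014} this way. The paper's fix is to impose the \emph{nonhomogeneous} Neumann condition $\partial^\mathcal{A}_\nu y = \lambda^2\theta^2\varphi^2 z$ on the auxiliary forward equation \eqref{2.1}; the duality pairing then delivers exactly $\lambda^2\mathbb{E}\int_\Sigma\theta^2\varphi^2 z^2$ on the left of \eqref{2.27012}, alongside the bulk term generated by the drift source $\lambda^3\theta^2\varphi^3 z$. (The boundary trace of the \emph{auxiliary} state in the forward Carleman is a separate matter---it is what lets you produce $\lambda\mathbb{E}\int_\Sigma\theta^2\varphi F_2^2$ on the right, which you correctly identified; but it does not give you the $z$-trace on the left.)

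\textbf{Gap 2: the gradient term.} There is no choice of $(h,H)\in L^2$ making the linear pairing $\mathbb{E}\int_Q zh$ equal to $\lambda\mathbb{E}\int_Q\theta^2\varphi|\nabla z|^2$; that would require $h$ to depend nonlinearly on $\nabla z$. The paper does not attempt duality for this term at all: it computes $d\bigl|\theta\varphi^{1/2}z\bigr|^2_{L^2(G)}$ directly, uses the weight-derivative bounds \eqref{2.32012}, and obtains the standalone estimate \eqref{2.019}, which is then added to the duality estimate \eqref{2.014}.

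\textbf{Suboptimal: the $p(T)z_T$ term.} Rather than cutoffs or support restrictions, the paper eliminates this term by solving a \emph{null-controllability} problem for the auxiliary forward equation: it adjoins controls $(\mathbbm{1}_{G_0}u,v)$ to the drift and diffusion of \eqref{2.1}, and Proposition~\ref{prop2.4} (proved via penalized HUM and the known Carleman estimate of Lemma~\ref{lm2.2}) furnishes $(\widehat{u},\widehat{v})$ driving $\widehat{y}(T)=0$ together with the negative-weight bound \eqref{2.2} on $\widehat{y},\nabla\widehat{y},\widehat{y}|_\Sigma,\widehat{u},\widehat{v}$. Those are precisely what is needed to absorb the five pairing terms in \eqref{2.3017} via Young's inequality. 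This controllability step---one auxiliary problem, one pairing, no limiting argument---is the structural device your plan is missing.
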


Secondly, we introduce the following general adjoint forward equation	\begin{equation}\label{3.010}
		\begin{cases}
			\begin{array}{ll}
			dz-\nabla\cdot(\mathcal{A}\nabla z) \,dt = (F_1+\nabla\cdot F)\,dt + F_2 \,dW(t)	 &\textnormal{in}\,\,Q,\\
		\partial^\mathcal{A}_\nu z+F\cdot\nu=F_3 &\textnormal{on}\,\,\Sigma,\\
			 z(0)=z_0 &\textnormal{in}\,\,G,
			\end{array}
		\end{cases}
	\end{equation}
where $z_0\in L^2(G)$ and the processes $F_1$, $F_2$, $F_3$ and $F$ satisfy that
$$F_1,F_2\in L^2_\mathcal{F}(0,T;L^2(G)), \quad F_3\in L^2_\mathcal{F}(0,T;L^2(\Gamma))\quad \textnormal{and} \quad F\in L^2_\mathcal{F}(0,T;L^2(G;\mathbb{R}^N)).$$

The following Carleman estimate for equation \eqref{3.010} will be the key tool to prove Theorem \ref{thm1.2}. 
\begin{thm}\label{thm1.1}
There exists a large $\mu_0\geq1$ such that for $\mu=\mu_0$, one can find a positive constant $C$ depending only on $G$, $G_0$, $\mu_0$ and $\mathcal{A}$ such that for all  $F_1,F_2\in L^2_\mathcal{F}(0,T;L^2(G))$, $F_3\in L^2_\mathcal{F}(0,T;L^2(\Gamma))$, $F\in L^2_\mathcal{F}(0,T;L^2(G;\mathbb{R}^N))$ and $z_0\in L^2(G)$, the weak solution $z$ of \eqref{3.010} fulfills that
\begin{align}
\begin{aligned}
    &\,\lambda^3\mathbb{E}\int_Q \theta^2\varphi^3z^2\,dxdt + \lambda\mathbb{E}\int_Q \theta^2\varphi\vert\nabla z\vert^2\,dxdt+\lambda^2\mathbb{E}\int_\Sigma \theta^2\varphi^2z^2\,d\sigma dt\\
&\leq C\Bigg[\lambda^3\mathbb{E}\int_{Q_0} \theta^2\varphi^3z^2\,dxdt+\mathbb{E}\int_Q \theta^2F_1^2\,dxdt+\lambda\mathbb{E}\int_\Sigma \theta^2\varphi F_3^2\,d\sigma dt\\
\label{3.202002}&\quad\quad\,\;+\lambda^2\mathbb{E}\int_Q \theta^2\varphi^2\vert F\vert^2\,dxdt+\lambda^2\mathbb{E}\int_Q \theta^2\varphi^2F_2^2\,dxdt\Bigg],
\end{aligned}
\end{align}
for all $\lambda\geq C(T+T^2)$.
\end{thm}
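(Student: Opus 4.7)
The plan is to prove Theorem~\ref{thm1.1} by a duality argument, reducing the desired forward Carleman estimate \eqref{3.202002} to the previously established backward Carleman estimate \eqref{1.014} of Theorem~\ref{thm02.1}. Concretely, we will pair the forward solution $z$ of \eqref{3.010} with the solution of an auxiliary backward problem of the form \eqref{202.3}, use It\^o's formula together with spatial integration by parts to derive an explicit duality identity, and then invoke Theorem~\ref{thm02.1} to control the auxiliary solution in terms of freely-chosen test data.

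\textbf{Step 1 (Duality identity).} First, for arbitrary test processes $\xi\in L^2_\mathcal{F}(0,T;L^2(G))$ and $\tilde\xi\in L^2_\mathcal{F}(0,T;L^2(G;\mathbb{R}^N))$, introduce the auxiliary backward equation
\begin{equation*}
\begin{cases}
d\phi + \nabla\cdot(\mathcal{A}\nabla\phi)\,dt = (\xi+\nabla\cdot\tilde\xi)\,dt + \Phi\,dW(t) & \textnormal{in }Q,\\
\partial^\mathcal{A}_\nu\phi - \tilde\xi\cdot\nu = 0 & \textnormal{on }\Sigma,\\
\phi(T)=0 & \textnormal{in }G,
\end{cases}
\end{equation*}
which fits the template \eqref{202.3} with data $(F_1,F,F_2,z_T)=(\xi,\tilde\xi,0,0)$. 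Computing $d(z\phi)$ by It\^o's formula, integrating over $Q$, and integrating by parts in space, we find that the boundary contributions involving $F\cdot\nu$ (from the Robin condition of $z$) and $\tilde\xi\cdot\nu$ (from the Robin condition of $\phi$) cancel pairwise with their divergence companions, producing the clean identity
\begin{equation*}
\mathbb{E}\int_Q (z\xi - \tilde\xi\cdot\nabla z)\,dxdt = -\mathbb{E}\int_G z_0\phi(0)\,dx - \mathbb{E}\int_Q \phi F_1\,dxdt - \mathbb{E}\int_\Sigma \phi F_3\,d\sigma dt + \mathbb{E}\int_Q F\cdot\nabla\phi\,dxdt - \mathbb{E}\int_Q F_2\Phi\,dxdt.
\end{equation*}

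\textbf{Step 2 (Carleman for $\phi$ and Riesz duality).} Next, we apply Theorem~\ref{thm02.1} to $(\phi,\Phi)$. Combined with an auxiliary weighted energy identity that absorbs the $\lambda^2\mathbb{E}\int \theta^2\varphi^2\Phi^2$ contribution from the right of \eqref{1.014}, this bounds the weighted $H^1$-norm of $\phi$ and its weighted trace norm in terms of the weighted $L^2$-norms of $\xi,\tilde\xi$ and a $Q_0$-localized term in $\phi$. We then view the identity above as a bilinear pairing of $(z,\nabla z)$ against $(\xi,\tilde\xi)$ and apply weighted Cauchy--Schwarz to each right-hand-side term: the $\phi$-side factors are absorbed by the Carleman bound for $\phi$, while the data-side factors are arranged to produce exactly the four integrals on the right of \eqref{3.202002}. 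Taking the supremum over $(\xi,\tilde\xi)$ in the unit ball of the appropriate dual weighted norm and invoking Riesz representation recovers the three weighted norms of $z$, $\nabla z$, $z|_\Sigma$ on the left of \eqref{3.202002}; the $Q_0$-localization in $\phi$ becomes a $Q_0$-localization in $z$ after a standard cutoff refinement, and lower-order contributions are absorbed for $\lambda\ge C(T+T^2)$.

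\textbf{Main obstacle.} The chief technical difficulty will be handling the process $\Phi$: the backward Carleman estimate \eqref{1.014} places $\lambda^2\mathbb{E}\int \theta^2\varphi^2 Z^2$ on its right-hand side, but in our duality setup $\Phi$ (the $Z$ of the auxiliary equation) is not a source---it is determined by the martingale representation---so closing the duality loop demands an independent weighted energy inequality bounding $\lambda^2\mathbb{E}\int\theta^2\varphi^2\Phi^2$ in terms of $\xi$ and $\tilde\xi$. The natural route is to apply It\^o's formula to the weighted quadratic $\theta^2\varphi^2\phi^2$ and carefully balance the resulting cross-terms against the Carleman weight, in the spirit of the method used in \cite{liu2014global}. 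A second, more routine subtlety is the precise matching of weighted spaces in the Riesz duality step so that the four data-side integrals on the right of \eqref{3.202002} emerge with the correct exponents of $\theta$, $\varphi$ and $\lambda$.
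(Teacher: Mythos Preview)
Your approach differs substantially from the paper's, and the obstacle you identify is fatal to the route you propose. The paper does \emph{not} invoke Theorem~\ref{thm02.1} to prove Theorem~\ref{thm1.1}. Instead it introduces a \emph{controlled} backward auxiliary equation \eqref{3.6} whose drift source is $\lambda^3\theta^2\varphi^3 z+\mathbbm{1}_{G_0}u$ and whose Neumann datum is $-\lambda^2\theta^2\varphi^2 z$, and proves (Proposition~\ref{prop3.1}) that one can choose $\widehat u$ so that $\widehat y(0)=0$ together with the negative-weight estimate \eqref{3.7} on $\widehat y,\nabla\widehat y,\widehat Y,\widehat u$. The point is that the penalized HUM dual of this controlled backward problem is a \emph{random} (pathwise deterministic) forward equation, so the Carleman estimate actually used is the deterministic one of Lemma~\ref{lm3.1} from \cite{fernandez2006null}; there is no $Z$-term on its right-hand side, and the bound on $\widehat Y$ is obtained separately by computing $d(\theta_\varepsilon^{-2}\varphi^{-2}y_\varepsilon^2)$, where the It\^o correction $\theta_\varepsilon^{-2}\varphi^{-2}Y_\varepsilon^2$ appears on the left with the good sign. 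The duality pairing then involves $\mathbb{E}\int F_2\widehat Y$, which is controlled by the \emph{negative}-weight norm $\lambda^{-2}\mathbb{E}\int\theta^{-2}\varphi^{-2}\widehat Y^2$ already supplied by \eqref{3.7}.

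Your plan, by contrast, applies \eqref{1.014} to the uncontrolled auxiliary pair $(\phi,\Phi)$ and then tries to bound the resulting $\lambda^2\mathbb{E}\int\theta^2\varphi^2\Phi^2$ by an It\^o computation on $\theta^2\varphi^2\phi^2$. This does not close: since $|(\theta^2\varphi^2)_t|\le C\lambda T\,\theta^2\varphi^4$, the time-derivative term contributes $C\lambda^3 T\,\mathbb{E}\int\theta^2\varphi^4\phi^2$, which cannot be absorbed by the $\lambda^3\mathbb{E}\int\theta^2\varphi^3\phi^2$ on the left of \eqref{1.014} because $T\varphi\ge cT^{-1}$ is unbounded; the gradient cross-term is worse still. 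There is also a second gap: your duality identity contains $-\mathbb{E}\int_G z_0\,\phi(0)\,dx$, and with no control in the auxiliary equation there is nothing forcing $\phi(0)=0$, so this term cannot be bounded by any quantity appearing on the right of \eqref{3.202002}. Both issues are exactly what the introduction of the control $\mathbbm{1}_{G_0}u$ in \eqref{3.6} is designed to resolve: the null-controllability $\widehat y(0)=0$ kills the endpoint term, and the weighted estimate on $\widehat Y$ comes in the negative-power scale $\theta^{-2}\varphi^{-2}$ where the It\^o computation \emph{does} close (cf.\ \eqref{B019}--\eqref{B1230}).
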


In establishing our Carleman estimates \eqref{1.014} and \eqref{3.202002}, we employ the duality method, along with some known Carleman estimates. Specifically, for \eqref{1.014} (resp., \eqref{3.202002}), we will use the known Carleman estimate for backward stochastic parabolic equations (resp., deterministic parabolic equations) with non-homogeneous Neumann boundary conditions as provided in \cite[Theorem 1]{yan2018carleman} (resp., \cite[Theorem 1]{fernandez2006null}). 
\begin{rmk}
It would be quite interesting to study the semilinear case of equations \eqref{1.1} and \eqref{1.2}. For global Lipschitz nonlinearities, it is expected that the controllability of such equations can be achieved following the approach used in the case of Dirichlet boundary conditions, see \cite{san23,semzhxl}. However, for a large class of nonlinearities as discussed in \cite{fernsemilinFour,fernandezzuaua2000}, the problem remains open. This challenge arises from the lack of compactness embeddings for the solution spaces encountered in the analysis of controllability for stochastic partial differential equations.
\end{rmk}
\begin{rmk}
Based on the Carleman estimates \eqref{1.014} and \eqref{3.202002}, one can readily deduce the unique continuation property for solutions of the adjoint equations corresponding to \eqref{1.1} and \eqref{1.2}, respectively. Consequently, employing the duality argument allows us to establish the approximate controllability for both equations \eqref{1.1} and \eqref{1.2}.
\end{rmk}
\begin{rmk}
Studying the null controllability of system \eqref{1.1} with only one control in the drift would be highly interesting. However, this task is still far from completion. For partial progress on this issue, we refer to \cite{lu2011some,observineqback}.
\end{rmk}

The rest of this paper is organized as follows. In Section \ref{sec2w}, we briefly establish the well-posedness of equations \eqref{202.3} and \eqref{3.010} and provide some preliminary tools that will be essential in the sequel. Sections \ref{sec2} and \ref{sec3} are devoted to establishing Theorems \ref{thm02.1} and \ref{thm1.1}, respectively. In Section \ref{sec4}, we apply our Carleman estimates to derive the appropriate observability inequalities for the adjoint systems of \eqref{1.1} and \eqref{1.2}. The proof of Theorems \ref{thm01.1} and \ref{thm1.2} is given in Section \ref{sec5}. Finally, we have included appendices \ref{appsec1} and \ref{appendsecB} in which we prove propositions that will be important in deriving our Carleman estimates.
\section{Well-posedness and some preliminary results}\label{sec2w}
We first recall the definitions of weak solutions of equations \eqref{202.3} and \eqref{3.010}.
\begin{df}
\begin{enumerate}[1.]
\item A process 
$$(z,Z)\in\big(L^2_\mathcal{F}(\Omega;C([0,T];L^2(G)))\bigcap L^2_\mathcal{F}(0,T;H^1(G))\big)\times L^2_\mathcal{F}(0,T;L^2(G))$$ is said to be a weak solution of \eqref{202.3} if for any $t\in[0,T]$ and all $\zeta\in C^1(\overline{G})$, it holds that
\begin{align*}
\int_G(z_T-z(t))\zeta\,dx=&\int_t^T\int_G \mathcal{A}\nabla z\cdot\nabla\zeta \,dxds-\int_t^T\int_\Gamma F_2\zeta \,dxds+\int_t^T\int_G F_1\zeta \,dxds\\
&-\int_t^T\int_G F\cdot\nabla\zeta \,dxds+\int_t^T\int_G Z\zeta \,dxdW(s).
\end{align*}
\item We call a process 
$$z\in L^2_\mathcal{F}(\Omega;C([0,T];L^2(G)))\bigcap L^2_\mathcal{F}(0,T;H^1(G))$$ a weak solution of \eqref{3.010} if for any $t\in[0,T]$ and all $\zeta\in C^1(\overline{G})$, we have that
\begin{align*}
\int_G(z(t)-z_0)\zeta\,dx=&-\int_0^t\int_G \mathcal{A}\nabla z\cdot\nabla\zeta \,dxds-\int_0^t\int_\Gamma F_3\zeta \,dxds+\int_0^t\int_G F_1\zeta \,dxds\\
&-\int_0^t\int_G F\cdot\nabla\zeta \,dxds+\int_0^t\int_G F_2\zeta \,dxdW(s).
\end{align*}
\end{enumerate}
\end{df}
Borrowing some ideas from the approach used in \cite{yan2018carleman}, we establish the following well-posedness of equations \eqref{202.3} and \eqref{3.010}.
\begin{prop}
\begin{enumerate}[1.]
\item Let $z_T\in L^2_{\mathcal{F}_T}(\Omega;L^2(G))$, $F_1\in L^2_\mathcal{F}(0,T;L^2(G))$, $F_2\in L^2_\mathcal{F}(0,T;L^2(\Gamma))$, $F\in L^2_\mathcal{F}(0,T;L^2(G;\mathbb{R}^N))$. Then there exists a unique weak solution
$$(z,Z)\in \Big(L^2_\mathcal{F}(\Omega;C([0,T];L^2(G)))\bigcap L^2_\mathcal{F}(0,T;H^1(G))\Big)\times L^2_\mathcal{F}(0,T;L^2(G))$$
of equation \eqref{202.3} such that
\begin{align*}
&|z|_{L^2_\mathcal{F}(\Omega;C([0,T];L^2(G)))}+|z|_{L^2_\mathcal{F}(0,T;H^1(G))}+|Z|_{L^2_\mathcal{F}(0,T;L^2(G))}\\
&\leq C\, \big(|z_T|_{L^2_{\mathcal{F}_T}(\Omega;L^2(G))}+|F_1|_{L^2_\mathcal{F}(0,T;L^2(G))}+|F_2|_{L^2_\mathcal{F}(0,T;L^2(\Gamma))}\\
&\qquad\;+|F|_{L^2_\mathcal{F}(0,T;L^2(G;\mathbb{R}^N))}\big),
\end{align*}
for some positive constant $C$ depending only on $G$ and $T$.
\item Let $z_0\in L^2(G)$, $F_1\in L^2_\mathcal{F}(0,T;L^2(G))$, $F_2\in L^2_\mathcal{F}(0,T;L^2(G))$, $F_3\in L^2_\mathcal{F}(0,T;L^2(\Gamma))$, and $F\in L^2_\mathcal{F}(0,T;L^2(G;\mathbb{R}^N))$. Then there exists a unique weak solution
$$z\in L^2_\mathcal{F}(\Omega;C([0,T];L^2(G)))\bigcap L^2_\mathcal{F}(0,T;H^1(G))$$
of \eqref{3.010} so that
\begin{align*}
&|z|_{L^2_\mathcal{F}(\Omega;C([0,T];L^2(G)))}+|z|_{L^2_\mathcal{F}(0,T;H^1(G))}\\
&\leq C\,\big(|z_0|_{L^2(G)}+|F_1|_{L^2_\mathcal{F}(0,T;L^2(G))}+|F_2|_{L^2_\mathcal{F}(0,T;L^2(G))}\\
&\qquad\;+|F_3|_{L^2_\mathcal{F}(0,T;L^2(\Gamma))}+|F|_{L^2_\mathcal{F}(0,T;L^2(G;\mathbb{R}^N))}\big),
\end{align*}
for some positive constant $C$ depending only on $G$ and $T$.
\end{enumerate}
\end{prop}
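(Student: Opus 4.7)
The plan is to prove parts (1) and (2) in parallel, following the scheme sketched in \cite{yan2018carleman}. The common difficulty is that $\nabla\cdot F$ lies only in $L^2_\mathcal{F}(0,T;H^{-1}(G))$, so the equations must be interpreted variationally. The crucial observation is that, upon integrating by parts against a sufficiently regular test function (in particular, against the solution itself in the energy estimate), the boundary contribution $\int_\Gamma z\, F\cdot\nu\,d\sigma$ produced by $\nabla\cdot F$ cancels exactly against the $F\cdot\nu$ appearing in the Robin-type conormal condition, leaving only the clean trace term $\int_\Gamma F_j z\,d\sigma$ ($j=2$ for the backward case, $j=3$ for the forward case).

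For Part 2 (the forward equation \eqref{3.010}), I would carry out a Galerkin approximation in the Hilbert basis $\{e_k\}$ of eigenfunctions of $-\Delta+I$ with homogeneous Neumann data on $\Gamma$. The projected system for $z^{(n)}=\sum_{k=1}^n \alpha_k^{(n)}(t)e_k$ is a finite-dimensional linear SDE with adapted Lipschitz drift and diffusion, for which well-posedness is standard. Applying Itô's formula to $|z^{(n)}|^2_{L^2(G)}$ and testing the weak formulation against $z^{(n)}$, the boundary cancellation described above yields
$$
d|z^{(n)}|_{L^2}^2 + 2\int_G \mathcal{A}\nabla z^{(n)}\cdot\nabla z^{(n)}\,dx\,dt = 2\left(\int_G F_1 z^{(n)}\,dx - \int_G F\cdot\nabla z^{(n)}\,dx + \int_\Gamma F_3 z^{(n)}\,d\sigma\right)dt + |F_2|_{L^2}^2\,dt + 2\langle z^{(n)},F_2\rangle\,dW.
$$
Using the uniform ellipticity constant $c_0$, Young's inequality to absorb $F\cdot\nabla z^{(n)}$ into the dissipation, and the trace inequality $|z|_{L^2(\Gamma)}^2\le \varepsilon|\nabla z|_{L^2(G)}^2+C_\varepsilon |z|_{L^2(G)}^2$ (valid because $\Gamma\in C^2$) to handle $\int_\Gamma F_3 z^{(n)}\,d\sigma$, I take expectation and apply Grönwall to obtain a uniform $L^2_\mathcal{F}(0,T;H^1(G))$ bound on $z^{(n)}$. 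Weak compactness produces a limit $z$ satisfying the weak formulation, and the Burkholder--Davis--Gundy inequality upgrades the bound to the space $L^2_\mathcal{F}(\Omega;C([0,T];L^2(G)))$. Uniqueness (and hence the continuous-dependence estimate) follows by applying the same identity to the difference of two weak solutions.

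For Part 1 (the backward equation \eqref{202.3}), a direct Galerkin scheme fails because the martingale integrand $Z$ must be reconstructed. I would instead approximate the data by smooth sequences $(F_1^n,F_2^n,F^n,z_T^n)$ with $F^n$ regular enough that $\nabla\cdot F^n\in L^2_\mathcal{F}(0,T;L^2(G))$. The smoothed problem falls within the inhomogeneous Neumann backward SPDE framework established in \cite{yan2018carleman}, giving a unique solution $(z^n,Z^n)$. The same integration-by-parts and Itô computation --- now also producing a nonnegative $\mathbb{E}\int_Q (Z^n)^2\,dx\,dt$ term, which sits on the left alongside the dissipation --- yields a uniform energy estimate in $n$. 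Passing to the limit in the weak formulation and using lower semicontinuity of norms concludes existence, with continuous dependence and uniqueness obtained exactly as in Part~2.

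The main obstacle is the rigorous justification of Itô's formula when the solution is only $H^1$-valued and the source lies in $H^{-1}$: the inner product $\langle z, \nabla\cdot F\rangle$ is not pointwise-defined and must be interpreted through the integration-by-parts/boundary-cancellation identity above. This is precisely what forces the approximation step in Part~1 and the Galerkin truncation in Part~2, so that all computations are first performed at the level of smooth objects and passed to the limit only after the a priori bounds are secured. A secondary technical issue is the boundary trace inequality, which is standard here thanks to the $C^2$ regularity of $\Gamma$ assumed from the outset.
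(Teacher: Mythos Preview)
Your sketch is sound and matches what the paper intends: the paper does not actually write out a proof of this proposition, it only states that the result is obtained ``borrowing some ideas from the approach used in \cite{yan2018carleman}'', which is precisely the scheme you describe (Galerkin/finite-dimensional reduction for the forward equation, smoothing of $F$ plus the yan2018carleman well-posedness for the backward one, with the $F\cdot\nu$ boundary cancellation making the a~priori estimates uniform). There is therefore nothing substantive to compare against; your write-up is already more detailed than the paper's.

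Two small points worth tightening. First, keep in mind that $\mathcal{A}$ is $\omega$- and $t$-dependent, so your Neumann--Laplacian eigenbasis is not diagonalising the operator; this is harmless for Galerkin (any $H^1(G)$ basis works), but the projected drift then contains the full bilinear form $\int_G \mathcal{A}\nabla z^{(n)}\cdot\nabla e_k$, and you should remark that adaptedness of the approximate coefficients is inherited from $\mathcal{A}\in L^\infty_\mathcal{F}$. Second, in Part~1 your uniform energy bound must be derived \emph{after} the boundary cancellation, i.e.\ directly from the weak formulation with test function $z^n$, not from the yan2018carleman estimate applied to the reformulated data $\tilde F_1=F_1^n+\nabla\cdot F^n$, $\tilde F_2=F_2^n+F^n\cdot\nu$ (the latter would blow up with $|\nabla\cdot F^n|_{L^2}$); you clearly have this in mind, but it should be stated explicitly.
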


In the subsequent analysis, we frequently utilize weak forms of stochastic processes. Therefore, the following Itô's formula will be crucial for performing certain computations. For further details and proofs, see, for instance, \cite[Chapter 2]{lu2021mathematical}.
\begin{lm}\label{lm1.1}
Let $z,y\in L^2_\mathcal{F}(0,T;H^1(G))$, $z_T\in L^2_{\mathcal{F}_T}(\Omega;L^2(G))$, $y_0\in L^2(G)$, $\phi,\widetilde{\phi}\in L^2_\mathcal{F}(0,T;(H^{1}(G))')$ and $\Psi,Z\in L^2_\mathcal{F}(0,T;L^2(G))$, such that for all $t\in [0,T]$, the processes $(z,Z)$ and $y$ solve respectively the following equations
\begin{align*}
z(t)=z_T-\int_t^T \phi(s)\,ds-\int_t^T Z(s)\,dW(s),\,\,\mathbb{P}\textnormal{-a.s.},\\
y(t)=y_0+\int_0^t \widetilde{\phi}(s)\,ds+\int_0^t \Psi(s)\,dW(s),\,\,\mathbb{P}\textnormal{-a.s.}
\end{align*}
Then the following formulas hold
\begin{enumerate}[1.]
\item For any $t\in[0,T]$, it holds that
\begin{align*}
\vert z(t)\vert^2_{L^2(G)}=&\,\vert z(0)\vert^2_{L^2(G)}+2\int_0^t \langle\phi(s),z(s)\rangle_{(H^{1}(G))',H^1(G)}\,ds\\
&+2\int_0^t (z(s),Z(s))_{L^2(G)}\,dW(s)+\int_0^t \vert Z(s)\vert_{L^2(G)}^2\,ds,\;\mathbb{P}\textnormal{-a.s.}
\end{align*}
\item For all $t\in[0,T]$, we have that
\begin{align*}
(z(t),y(t))_{L^2(G)}=&\,( z(0),y_0)_{L^2(G)}+\int_0^t \langle\phi(s),y(s)\rangle_{(H^{1}(G))',H^1(G)}\,ds\\
&+\int_0^t \langle\widetilde{\phi}(s),z(s)\rangle_{(H^{1}(G))',H^1(G)}\,ds+\int_0^t (y(s),Z(s))_{L^2(G)}\,dW(s)\\
&+\int_0^t (z(s),\Psi(s))_{L^2(G)}\,dW(s)+\int_0^t (Z(s),\Psi(s))_{L^2(G)}\,ds,\;\mathbb{P}\textnormal{-a.s.}
\end{align*}
\end{enumerate}
\end{lm}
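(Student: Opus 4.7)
The plan is to derive both identities by a Galerkin regularization argument, reducing the statement in the Gelfand triple $H^1(G)\hookrightarrow L^2(G)\hookrightarrow (H^1(G))'$ to the classical finite-dimensional It\^o formula and then passing to the limit.

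First I would fix an orthonormal basis $\{e_k\}_{k\geq 1}$ of $L^2(G)$ made of eigenfunctions of a self-adjoint elliptic operator with Neumann boundary conditions on $G$, so that $\{e_k\}$ is simultaneously orthogonal in $H^1(G)$, and the $L^2(G)$-orthogonal projection $P_n$ onto $V_n=\mathrm{span}\{e_1,\dots,e_n\}$ extends by self-adjoint duality to a projection on $(H^1(G))'$ with $P_n u\to u$ in $H^1(G)$ whenever $u\in H^1(G)$ and $P_n\eta\to\eta$ in $(H^1(G))'$ whenever $\eta\in (H^1(G))'$. Set $z_n=P_n z$, $Z_n=P_n Z$, $y_n=P_n y$, $\Psi_n=P_n\Psi$, $\phi_n=P_n\phi$, $\widetilde\phi_n=P_n\widetilde\phi$. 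Each $z_n$ and $y_n$ is then a $V_n$-valued It\^o process whose coordinates in the basis satisfy ordinary It\^o SDEs, so the classical multivariate It\^o formula applies to $|z_n(t)|^2_{L^2(G)}$ and $(z_n(t),y_n(t))_{L^2(G)}$ and yields identities (1) and (2) at the Galerkin level, with every duality pairing replaced by an honest $L^2(G)$ inner product such as $(\phi_n,z_n)_{L^2(G)}$, $(\widetilde\phi_n,z_n)_{L^2(G)}$, and $(\phi_n,y_n)_{L^2(G)}$.

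Next I would pass to the limit $n\to\infty$. By the choice of the basis, $z_n\to z$ and $y_n\to y$ strongly in $L^2_\mathcal{F}(0,T;H^1(G))$, $Z_n\to Z$ and $\Psi_n\to\Psi$ strongly in $L^2_\mathcal{F}(0,T;L^2(G))$, and $\phi_n\to\phi$, $\widetilde\phi_n\to\widetilde\phi$ strongly in $L^2_\mathcal{F}(0,T;(H^1(G))')$. The key observation is that for $u_n,v_n\in V_n$ one has $(v_n,u_n)_{L^2(G)}=\langle v_n,u_n\rangle_{(H^1(G))',H^1(G)}$, so combining strong convergence in $(H^1(G))'$ with strong convergence in $H^1(G)$ gives $(\phi_n,z_n)_{L^2(G)}\to\langle\phi,z\rangle_{(H^1(G))',H^1(G)}$ in $L^1(\Omega\times(0,T))$, and analogously for the cross terms. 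The quadratic variation integrals $\int_0^t|Z_n|^2_{L^2(G)}\,ds$ and $\int_0^t(Z_n,\Psi_n)_{L^2(G)}\,ds$ pass to the limit by Cauchy--Schwarz and strong $L^2$ convergence, while the stochastic integrals converge in $L^2(\Omega)$ uniformly on $[0,T]$ by the It\^o isometry. A standard subsequence extraction then yields $\mathbb{P}$-a.s.\ pointwise convergence of $|z_n(t)|^2_{L^2(G)}$ and $(z_n(t),y_n(t))_{L^2(G)}$ for every $t\in[0,T]$.

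The main technical obstacle is verifying the compatibility of the chosen basis with the Gelfand triple so that the convergence $(\phi_n,z_n)_{L^2(G)}\to\langle\phi,z\rangle_{(H^1(G))',H^1(G)}$ becomes rigorous; this is precisely why one selects a basis whose finite linear combinations are dense in $H^1(G)$ (not merely in $L^2(G)$), and why spectral bases of elliptic operators with compatible boundary conditions are the natural choice. Once this compatibility is secured, all the remaining limit passages are routine applications of the It\^o isometry and dominated convergence, and identity (2) can alternatively be recovered from (1) by polarization applied to $z\pm y$, whose It\^o decompositions have drifts $\widetilde\phi\pm(-\phi)$ and diffusions $\Psi\pm Z$.
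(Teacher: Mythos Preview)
Your Galerkin regularization argument is a correct and standard route to this infinite-dimensional It\^o formula in the variational (Gelfand triple) setting, and the technical points you flag---choosing a spectral basis that is simultaneously orthogonal in $L^2(G)$ and $H^1(G)$ so that the projections are uniformly bounded on $H^1(G)$ and on $(H^1(G))'$, then passing the duality pairings, quadratic-variation terms, and stochastic integrals to the limit via strong convergence and the It\^o isometry---are exactly the right ones. One small remark: to obtain the identities for \emph{every} $t\in[0,T]$ $\mathbb{P}$-a.s.\ (rather than for a.e.\ $t$), you should argue uniform-in-$t$ convergence of the right-hand sides in $L^2(\Omega)$ (Doob's maximal inequality handles the stochastic integrals) and then use that both sides have continuous modifications; mere subsequence extraction gives only almost-everywhere-in-$t$ statements.

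As to comparison with the paper: the paper does \emph{not} prove this lemma. It states it as a known tool and refers the reader to \cite[Chapter~2]{lu2021mathematical} for details and proofs. What you have written is essentially the proof one finds there (or in the classical references of Pardoux and Krylov--Rozovski\u{\i}), so your approach is fully consistent with the literature the paper invokes, even though the paper itself offers no argument to compare against.
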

In the above lemma, $(\cdot,\cdot)_{L^2(G)}$ denotes the inner product in $L^2(G)$, and $\langle\cdot,\cdot\rangle_{(H^{1}(G))',H^1(G)}$ is the duality product between $H^1(G)$ and its dual $(H^{1}(G))'$ with respect to the pivot space $L^2(G)$.

The divergence operator ``$\nabla\cdot\,$" appearing in equations \eqref{202.3} and \eqref{3.010} needs to be interpreted in the weak sense, as given by the following known result.
\begin{lm}\label{lm1.2}
For any $F\in L^2(G;\mathbb{R}^N)$, we have the following extension of the operator divergence of $F$ as the linear continuous operator on $H^1(G)$ given by
\begin{equation*}
\nabla\cdot F:H^1(G)\longrightarrow\mathbb{R},\;\;\;\; z \longmapsto -\int_G F\cdot\nabla z\, \,dx+\langle F\cdot\nu,z|_\Gamma\rangle_{H^{-1/2}(\Gamma),H^{1/2}(\Gamma)}.
\end{equation*}
\end{lm}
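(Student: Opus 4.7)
The plan is to verify that the right-hand side formula defines a well-defined bounded linear functional on $H^1(G)$, and that this functional coincides with the classical divergence whenever $F$ admits extra smoothness. The argument separates into bounding the interior term $z \mapsto -\int_G F\cdot\nabla z\,dx$, interpreting and bounding the boundary duality term $z \mapsto \langle F\cdot\nu, z|_\Gamma\rangle_{H^{-1/2}(\Gamma),H^{1/2}(\Gamma)}$, and matching the formula with the divergence theorem on a dense subclass of smooth vector fields.

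For the interior term, linearity in $z$ is automatic from the linearity of the integral, and continuity follows immediately from the Cauchy--Schwarz inequality: $\left|\int_G F\cdot\nabla z\,dx\right| \le |F|_{L^2(G;\mathbb{R}^N)}|\nabla z|_{L^2(G;\mathbb{R}^N)} \le |F|_{L^2(G;\mathbb{R}^N)}|z|_{H^1(G)}$. For the boundary term, I would first invoke the continuity of the trace operator $\gamma_0\colon H^1(G)\to H^{1/2}(\Gamma)$ and then use the duality estimate $\bigl|\langle F\cdot\nu, z|_\Gamma\rangle_{H^{-1/2}(\Gamma),H^{1/2}(\Gamma)}\bigr| \le |F\cdot\nu|_{H^{-1/2}(\Gamma)}\,|z|_{H^1(G)}$, with $F\cdot\nu$ understood via the standard normal trace construction on divergence-integrable fields. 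Summing the two bounds gives the desired continuous linear functional on $H^1(G)$. To confirm this is indeed the natural extension of the classical divergence, I would recall that for $F\in C^1(\overline{G};\mathbb{R}^N)$ the divergence theorem yields $\int_G(\nabla\cdot F)z\,dx = -\int_G F\cdot\nabla z\,dx + \int_\Gamma(F\cdot\nu)z\,d\sigma$, so the stated formula agrees with the classical pairing of $\nabla\cdot F$ against $z$; density of $C^1(\overline{G};\mathbb{R}^N)$ in $L^2(G;\mathbb{R}^N)$ then closes the extension via a limiting argument applied to the interior part, while the boundary part passes to the limit in a compatible manner.

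The most delicate point is assigning rigorous meaning to $F\cdot\nu$ when $F$ is only assumed to lie in $L^2(G;\mathbb{R}^N)$, since a generic $L^2$ vector field does not carry a well-defined normal trace. I would handle this by treating the full right-hand side as the primary object: the interior integral alone already determines a continuous linear functional on $H^1(G)$ with norm controlled by $|F|_{L^2(G;\mathbb{R}^N)}$, and the boundary duality pairing enters as the integration-by-parts remnant whose precise interpretation is inherited from whatever further regularity (for instance, $\nabla\cdot F \in L^2(G)$, so $F\in H(\mathrm{div};G)$) $F$ possesses in context. In the applications to \eqref{202.3}--\eqref{3.010}, $F$ arises as a source term whose contribution is paired against test functions $\zeta \in C^1(\overline{G})$ in exactly this form, so the extended divergence as stated is fully consistent with the weak-solution definitions given above and suffices for every use of the lemma that appears later in the paper.
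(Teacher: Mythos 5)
The paper offers no proof of this lemma at all: it is stated as a ``known result'' and immediately used, so there is no in-paper argument to compare yours against. Your treatment of the interior term is correct and complete --- linearity is immediate and the bound $\bigl|\int_G F\cdot\nabla z\,dx\bigr|\le |F|_{L^2(G;\mathbb{R}^N)}|z|_{H^1(G)}$ follows from Cauchy--Schwarz, and this alone already gives a continuous linear functional on $H^1(G)$ extending the action of $\nabla\cdot F$ on $H^1_0(G)$.

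The delicate point you flag is a genuine one, and it deserves to be stated more sharply than ``inherited from whatever further regularity $F$ possesses in context.'' For an arbitrary $F\in L^2(G;\mathbb{R}^N)$ there is no normal trace: the continuous map $F\mapsto F\cdot\nu|_\Gamma\in H^{-1/2}(\Gamma)$ is defined on $H(\mathrm{div};G)=\{F\in L^2:\nabla\cdot F\in L^2(G)\}$ via the very integration-by-parts identity one is trying to establish, and it does not extend to all of $L^2(G;\mathbb{R}^N)$. So the lemma, read literally, assigns a meaning to a pairing that does not exist in general; your density argument cannot close this, because $F_n\to F$ in $L^2$ gives no control on $F_n\cdot\nu$ in $H^{-1/2}(\Gamma)$. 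The correct resolution --- which you arrive at in your last paragraph and which matches how the lemma actually functions in the paper --- is that the boundary pairing is never evaluated intrinsically from $F$: in the weak formulations of \eqref{202.3} and \eqref{3.010} the quantity $F\cdot\nu$ is prescribed through the boundary condition (it is traded for the data $F_2$, resp.\ $F_3$), so only the interior term $-\int_G F\cdot\nabla\zeta\,dx$ ever appears against test functions. In other words, the lemma should be read as: once a boundary value $F\cdot\nu\in H^{-1/2}(\Gamma)$ is \emph{assigned}, the displayed formula defines a continuous functional on $H^1(G)$ consistent with the classical divergence for smooth $F$; your verification then goes through verbatim. With that reformulation made explicit, your argument is sound and is, for all practical purposes, the proof the authors have in mind.
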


In Section \ref{sec4}, to establish the energy estimate for weak solutions of equations \eqref{202.3} and \eqref{3.010}, we will require the following trace estimate from \cite{fernandez2006null,Grisvard}.
\begin{lm}
 There exists a constant $C=C(G)>0$ so that
\begin{equation}\label{estongamma}
\int_\Gamma z^2 \,d\sigma \leq C\Bigg(\int_G \big(z^2+|\nabla z|^2\big)\,dx\Bigg)^{1/2}\Bigg(\int_G z^2\,dx\Bigg)^{1/2}\quad\textnormal{for all} \;\;z\in H^1(G).
\end{equation}
\end{lm}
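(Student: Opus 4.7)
The plan is to reduce the boundary integral to an interior integral via the divergence theorem, and then estimate the resulting interior terms using the Cauchy--Schwarz inequality to produce the geometric-mean form on the right-hand side.

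First, since $\Gamma$ is $C^2$, the outer unit normal $\nu$ extends to a vector field $\eta\in C^1(\overline{G};\mathbb{R}^N)$ with $\eta=\nu$ on $\Gamma$ (this is a standard tubular-neighborhood construction). For $z\in C^1(\overline{G})$, I would write
\[
\int_\Gamma z^2\,d\sigma=\int_\Gamma z^2\,(\eta\cdot\nu)\,d\sigma=\int_G\nabla\cdot(z^2\eta)\,dx=\int_G z^2\,\nabla\cdot\eta\,dx+2\int_G z\,\eta\cdot\nabla z\,dx,
\]
using the divergence theorem. Since $\nabla\cdot\eta$ and $\eta$ are bounded on $\overline{G}$ by a constant depending only on $G$, this gives
\[
\int_\Gamma z^2\,d\sigma\le C\int_G z^2\,dx+C\Bigl(\int_G z^2\,dx\Bigr)^{1/2}\Bigl(\int_G|\nabla z|^2\,dx\Bigr)^{1/2}
\]
after applying Cauchy--Schwarz to the gradient term.

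Next, I would absorb both terms into the desired product. For the first term I simply use
\[
\int_G z^2\,dx\le\Bigl(\int_G z^2\,dx\Bigr)^{1/2}\Bigl(\int_G\bigl(z^2+|\nabla z|^2\bigr)\,dx\Bigr)^{1/2},
\]
and for the second term I use monotonicity to replace $\int_G|\nabla z|^2\,dx$ by $\int_G(z^2+|\nabla z|^2)\,dx$. Combining these gives exactly the estimate \eqref{estongamma} for $z\in C^1(\overline{G})$, with $C$ depending only on $G$.

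Finally, I would extend the inequality to arbitrary $z\in H^1(G)$ by density: $C^1(\overline{G})$ is dense in $H^1(G)$ (since $\Gamma\in C^2$), the trace operator is continuous from $H^1(G)$ into $L^2(\Gamma)$, and both sides of \eqref{estongamma} are continuous functionals of $z$ with respect to the $H^1(G)$-norm, so passing to the limit concludes the proof. There is no real obstacle here; the only point requiring care is the construction of the $C^1$ extension $\eta$ of the normal, which is where the $C^2$ regularity of $\Gamma$ is used.
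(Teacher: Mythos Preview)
Your argument is correct. The paper does not actually give a proof of this lemma; it merely records the estimate and cites \cite{fernandez2006null,Grisvard} for it. Your proof via extending the outer normal to a $C^1$ vector field on $\overline{G}$, applying the divergence theorem to $z^2\eta$, and then using Cauchy--Schwarz is precisely the standard argument one finds in those references (in particular in Grisvard), so there is nothing to compare: you have supplied what the paper omits, and by the expected route.
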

\section{Carleman estimate for backward stochastic parabolic equations}\label{sec2}
This section is devoted to proving Theorem \ref{thm02.1}. Firstly, it is easy to check that there exists a positive constant $C=C(G,G_0)$ such that for all $(t,x)\in Q$ 
	\begin{equation}\label{2.301}
		\begin{array}{l}
	\varphi\geq CT^{-2},\qquad\vert\varphi_t\vert\leq CT\varphi^2,\qquad\vert\varphi_{tt}\vert\leq CT^2\varphi^3,\\\\
			\vert\alpha_t\vert\leq CTe^{2\mu\vert\psi\vert_\infty}\varphi^2,\qquad\vert\alpha_{tt}\vert\leq CT^2e^{2\mu\vert\psi\vert_\infty}\varphi^3.
		\end{array}
	\end{equation} 

We will apply the duality method for the proof of Theorem \ref{thm02.1}. Hence, we consider the following controlled forward stochastic parabolic equation 
\begin{equation}\label{2.1}
\begin{cases}
\begin{array}{ll}
dy - \nabla\cdot(\mathcal{A}\nabla y) \,dt = (\lambda^3\theta
^2\varphi^3z+\mathbbm{1}_{G_0}u) \,dt + v \,dW(t) &\textnormal{in}\,\,Q,\\
\partial^\mathcal{A}_\nu y= \lambda^2\theta^2\varphi^2 z&\textnormal{on}\,\,\Sigma,\\
y(0)=0 &\textnormal{in}\,\,G,
			\end{array}
		\end{cases}
\end{equation}
where $(u,v)\in L^2_\mathcal{F}(0,T;L^2(G_0))\times L^2_\mathcal{F}(0,T;L^2(G))$ is the control process, $(z,Z)$ is the solution of \eqref{202.3} and $\theta$ and $\varphi$ are the weight functions provided in \eqref{2.2012}. 

We have the following controllability result for system \eqref{2.1}. For completeness, the proof of this result is given in Appendix \ref{appsec1}.
\begin{prop}\label{prop2.4}
There exists $(\widehat{u},\widehat{v})\in L^2_\mathcal{F}(0,T;L^2(G_0))\times L^2_\mathcal{F}(0,T;L^2(G))$ a pair of controls such that the associated solution $\widehat{y}\in L^2_\mathcal{F}(\Omega;C([0,T];L^2(G)))\cap L^2_\mathcal{F}(0,T;H^1(G))$ of \eqref{2.1} satisfies 
$$\widehat{y}(T,\cdot)=0 \;\;\textnormal{in}\;\, G, \;\,\mathbb{P}\textnormal{-a.s.}$$ 
Moreover, there exists $\mu_0\geq1$ such that for $\mu=\mu_0$, one can find a positive constant $C$ depending only on $G$, $G_0$, $\mu_0$ and $\mathcal{A}$ such that 
\begin{align}
\begin{aligned}
&\,\lambda^{-3}\mathbb{E}\int_{Q} \theta^{-2}\varphi^{-3}\widehat{u}^2\,dxdt+\mathbb{E}\int_Q \theta^{-2}\widehat{y}^2\,dxdt+\lambda^{-1}\mathbb{E}\int_\Sigma\theta^{-2}\varphi^{-1}\widehat{y}^2\,d\sigma dt\\
&+\lambda^{-2}\mathbb{E}\int_Q \theta^{-2}\varphi^{-2}\vert\nabla\widehat{y}\vert^2\,dxdt+\lambda^{-2}\mathbb{E}\int_Q \theta^{-2}\varphi^{-2}\widehat{v}^2\,dxdt\\
\label{2.2}&
\leq C\Bigg[\lambda^3\mathbb{E}\int_Q \theta^2\varphi^3z^2\,dxdt + \lambda^2\mathbb{E}\int_\Sigma \theta^2\varphi^2z^2\,d\sigma dt\Bigg],
\end{aligned}
\end{align}
for all $\lambda\geq C(T+T^2)$.
\end{prop}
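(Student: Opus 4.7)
The plan is to apply the Hilbert Uniqueness Method: identify the adjoint of \eqref{2.1}, derive an observability inequality for it from the Carleman estimate of \cite[Theorem 1]{yan2018carleman} for backward SPEs with homogeneous Neumann boundary condition, construct the controls by a penalized duality minimization, and close the estimate \eqref{2.2} with a weighted energy estimate for $\hat y$.

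First I would identify the adjoint. For $p_T\in L^2_{\mathcal{F}_T}(\Omega;L^2(G))$, let $(p,P)$ be the solution of the backward SPE
$$dp + \nabla\cdot(\mathcal{A}\nabla p)\,dt = P\,dW(t)\,\,\textnormal{in}\,\,Q,\quad \partial^\mathcal{A}_\nu p = 0\,\,\textnormal{on}\,\,\Sigma,\quad p(T) = p_T.$$
Applying Lemma \ref{lm1.1} (part 2) to $(y,p)_{L^2(G)}$ and using Lemma \ref{lm1.2} to handle the $\mathcal{A}$-terms, the primal boundary condition contributes $\mathbb{E}\int_\Sigma \lambda^2\theta^2\varphi^2 zp\,d\sigma dt$ while the adjoint's homogeneous Neumann BC contributes $0$, giving the duality identity
$$\mathbb{E}(y(T),p_T)_{L^2(G)} = \mathbb{E}\int_Q(\lambda^3\theta^2\varphi^3 z + \mathbbm{1}_{G_0}u)p\,dxdt + \mathbb{E}\int_Q vP\,dxdt + \mathbb{E}\int_\Sigma \lambda^2\theta^2\varphi^2 zp\,d\sigma dt.$$
Since the adjoint is a backward SPE with homogeneous Neumann BC, \cite[Theorem 1]{yan2018carleman} applies directly with vanishing source data to yield the observability inequality
$$\lambda^3\mathbb{E}\int_Q\theta^2\varphi^3 p^2\,dxdt + \lambda^2\mathbb{E}\int_\Sigma\theta^2\varphi^2 p^2\,d\sigma dt \leq C\Big[\lambda^3\mathbb{E}\int_{Q_0}\theta^2\varphi^3 p^2\,dxdt + \lambda^2\mathbb{E}\int_Q \theta^2\varphi^2 P^2\,dxdt\Big],$$
valid for $\lambda\geq C(T+T^2)$ and $\mu = \mu_0$. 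This is precisely the pairing that allows one to dominate the $z$-dependent linear terms of the duality identity by the intended quadratic cost.

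Next I would construct the controls by penalization. For $\varepsilon > 0$, introduce
$$J_\varepsilon(p_T) = \tfrac12\mathbb{E}\!\int_{Q_0}\!\lambda^3\theta^2\varphi^3 p^2 + \tfrac12\mathbb{E}\!\int_Q\!\lambda^2\theta^2\varphi^2 P^2 + \tfrac{\varepsilon}{2}|p_T|^2_{L^2_{\mathcal{F}_T}(\Omega;L^2(G))} + \mathbb{E}\!\int_Q\!\lambda^3\theta^2\varphi^3 zp + \mathbb{E}\!\int_\Sigma\!\lambda^2\theta^2\varphi^2 zp.$$
Cauchy--Schwarz on the linear terms combined with the observability makes $J_\varepsilon$ continuous, strictly convex, and coercive on $L^2_{\mathcal{F}_T}(\Omega;L^2(G))$, so it admits a unique minimizer $\hat p_T^\varepsilon$. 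Setting $\hat u^\varepsilon := \lambda^3\theta^2\varphi^3\hat p^\varepsilon\mathbbm{1}_{G_0}$ and $\hat v^\varepsilon := \lambda^2\theta^2\varphi^2\hat P^\varepsilon$, the Euler--Lagrange equation combined with the duality identity forces $\hat y^\varepsilon(T) = -\varepsilon\hat p_T^\varepsilon$, while $J_\varepsilon(\hat p_T^\varepsilon) \leq J_\varepsilon(0) = 0$ gives, via Young's inequality, uniform-in-$\varepsilon$ bounds on $\hat u^\varepsilon,\hat v^\varepsilon$ in the weighted norms appearing on the LHS of \eqref{2.2}, dominated by the RHS of \eqref{2.2}. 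Passing to a weak limit $(\hat u,\hat v)$, and noting that $\varepsilon\hat p_T^\varepsilon \to 0$ strongly in $L^2_{\mathcal{F}_T}(\Omega;L^2(G))$ (since $\sqrt\varepsilon|\hat p_T^\varepsilon|_{L^2}$ is bounded), I obtain controls that steer $\hat y$ to $0$ at time $T$.

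To recover the remaining $\hat y$-terms in \eqref{2.2}, namely the weighted $L^2$ norms of $\hat y$, $\nabla\hat y$ and the boundary trace of $\hat y$, I would apply It\^o's formula (Lemma \ref{lm1.1}, part 1) to $|\theta^{-1}\hat y|^2_{L^2(G)}$, integrate by parts to produce the $|\nabla\hat y|^2$ energy plus a boundary term involving $\partial^\mathcal{A}_\nu\hat y = \lambda^2\theta^2\varphi^2 z$, and absorb the time-derivative contribution $|\partial_t\theta^{-2}|\le C\lambda T\varphi^2\theta^{-2}$ (from \eqref{2.301}) together with the boundary term (handled via the trace inequality \eqref{estongamma}) by taking $\lambda$ large. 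The main technical obstacle is this weighted energy step: the weight $\theta^{-1}$ is singular at $t = 0,T$, so the It\^o computation must be performed on truncated intervals with a subsequent passage to the limit, and the boundary source $\lambda^2\theta^2\varphi^2 z$ is large, so its contribution must be controlled using the gain from the interior integration-by-parts term rather than a naive trace bound. Assembling all these estimates yields \eqref{2.2}.
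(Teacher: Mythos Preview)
Your dual HUM construction of the controls is a legitimate alternative to the paper's primal penalization, and the observability inequality you extract from \cite[Theorem~1]{yan2018carleman} with $F_1=F_2=0$ does yield the uniform weighted bounds on $(\widehat u,\widehat v)$. The gap is in the last step: the single It\^o computation on $|\theta^{-1}\widehat y|^2_{L^2(G)}$ (or even $|\theta^{-1}\varphi^{-1}\widehat y|^2_{L^2(G)}$) does \emph{not} deliver the three $\widehat y$-terms of \eqref{2.2}. Integration by parts gives you the gradient energy, but the boundary contribution is
\[
2\,\mathbb{E}\!\int_\Sigma \theta^{-2}\varphi^{-2}\,\widehat y\,\partial_\nu^{\mathcal A}\widehat y\,d\sigma dt
=2\lambda^{2}\,\mathbb{E}\!\int_\Sigma z\,\widehat y\,d\sigma dt,
\]
because on $\Gamma$ one has $\psi=0$ and the $\theta$-weights cancel exactly. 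This term carries no exponential decay on $\widehat y$, and neither the trace inequality \eqref{estongamma} nor the interior gradient term can absorb it with the correct power of $\lambda$ and $\varphi$; any Young splitting produces either $\lambda^{5}\int_\Sigma\theta^{2}\varphi\,z^{2}$ (wrong power) or $\int_\Sigma\varphi^{2}\widehat y^{2}$ (no $\theta^{-2}$ weight), and the latter cannot be closed from the energy identity alone. Likewise, the bulk term $\mathbb{E}\int_Q\theta^{-2}\widehat y^{2}$ never appears as a good-sign term in the identity, so you have no independent control of it.

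The paper resolves this by enlarging the penalized functional to include the weighted state norms $\frac12\mathbb{E}\int_Q\theta_\varepsilon^{-2}y^{2}$ and $\frac12\lambda^{-1}\mathbb{E}\int_\Sigma\theta_\varepsilon^{-2}\varphi^{-1}y^{2}$ (with a regularized weight $\theta_\varepsilon$). The Euler--Lagrange system then forces the adjoint $(r_\varepsilon,R_\varepsilon)$ to carry the sources $F_1=-\theta_\varepsilon^{-2}y_\varepsilon$ and Neumann data $F_2=\lambda^{-1}\theta_\varepsilon^{-2}\varphi^{-1}y_\varepsilon$; the Carleman estimate \eqref{3.6carles} is applied \emph{with these nonzero sources}, and since $\theta^{2}\theta_\varepsilon^{-2}\leq1$ the source contributions are dominated by the very state norms already present in the cost. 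This closes the bulk and boundary $y$-bounds directly from the minimization (inequality \eqref{2.07}), after which the gradient bound follows from an It\^o computation on $\theta_\varepsilon^{-2}\varphi^{-2}y_\varepsilon^{2}$ that now uses the already-controlled quantities on the right-hand side. Your scheme would need exactly this enrichment of the cost (or an equivalent auxiliary duality step) to close.
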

Using Proposition \ref{prop2.4}, we are now ready to prove Theorem \ref{thm02.1}. 
\begin{proof}[Proof of Theorem \ref{thm02.1}]
Fix $\mu=\mu_0$ given in Proposition \ref{prop2.4}. Let $(z,Z)$ be the solution of \eqref{202.3} and $\widehat{y}$ be the solution of \eqref{2.1} associated with controls $(\widehat{u},\widehat{v})$ obtained in Proposition \ref{prop2.4}. Applying Itô's formula, computing $d(\widehat{y},z)_{L^2(G)}$, integrating the result on $(0,T)$ and taking the expectation on both sides, we obtain 
\begin{align*}
\begin{aligned}
0=&\,\lambda^3\mathbb{E}\int_Q \theta^2\varphi^3z^2\,dxdt+\lambda^2\mathbb{E}\int_\Sigma \theta^2\varphi^2z^2\,d\sigma dt+\mathbb{E}\int_Q 
 F_1\widehat{y}\,dxdt\\
&
-\mathbb{E}\int_\Sigma F_2\widehat{y}\,d\sigma dt-\mathbb{E}\int_Q F\cdot\nabla\widehat{y}\,dxdt+\mathbb{E}\int_Q \mathbbm{1}_{G_0}\widehat{u}z\,dxdt+\mathbb{E}\int_Q \widehat{v}Z \,dxdt.
\end{aligned}
\end{align*}
Then it follows that
\begin{align}
\begin{aligned}
&\,\lambda^3\mathbb{E}\int_Q \theta^2\varphi^3z^2\,dxdt
+\lambda^2\mathbb{E}\int_\Sigma \theta^2\varphi^2z^2\,d\sigma dt\\
\label{2.27012}&=\mathbb{E}\int_\Sigma F_2 \widehat{y} \,d\sigma dt-\mathbb{E}\int_Q \big[F_1\widehat{y}-F\cdot\nabla\widehat{y}+\mathbbm{1}_{G_0}\widehat{u}z+\widehat{v}Z\big] \,dxdt.
\end{aligned}
\end{align}
Using Young's inequality in the right-hand side of \eqref{2.27012}, we have for any $\varepsilon>0$
\begin{align}
\begin{aligned}
&\,\lambda^3\mathbb{E}\int_Q \theta^2\varphi^3z^2\,dxdt
+\lambda^2\mathbb{E}\int_\Sigma \theta^2\varphi^2z^2\,d\sigma dt\\
&\leq \varepsilon\Bigg[\lambda^{-1}\mathbb{E}\int_\Sigma\theta^{-2}\varphi^{-1}\widehat{y}^2\,d\sigma dt+\mathbb{E}\int_Q\theta^{-2}\widehat{y}^2\,dxdt+\lambda^{-2}\mathbb{E}\int_Q \theta^{-2}\varphi^{-2}\vert\nabla\widehat{y}\vert^2\,dxdt\\
&\quad\quad\;+\lambda^{-3}\mathbb{E}\int_{Q} \theta^{-2}\varphi^{-3}\widehat{u}^2\,dxdt+\lambda^{-2}\mathbb{E}\int_Q \theta^{-2}\varphi^{-2}\widehat{v}^2\,dxdt\Bigg]\\
&\quad +C(\varepsilon)\Bigg[\lambda\mathbb{E}\int_\Sigma \theta^2\varphi F_2^2\,d\sigma dt+\mathbb{E}\int_Q \theta^2 F_1^2\,dxdt+\lambda^2\mathbb{E}\int_Q \theta^2\varphi^2\vert F\vert^2\,dxdt\\
\label{2.3017}&\quad\quad\quad\quad\;\;\,+\lambda^3\mathbb{E}\int_{Q_0} \theta^2\varphi^3z^2\,dxdt+\lambda^2\mathbb{E}\int_Q \theta^2\varphi^2 Z^2\,dxdt\Bigg].
\end{aligned}
\end{align}
Now, recalling the inequality \eqref{2.2}, the estimate \eqref{2.3017} implies that for a sufficiently small $\varepsilon>0$, it holds that
\begin{align}\label{2.014}
\begin{aligned}
&\lambda^3\mathbb{E}\int_Q \theta^2\varphi^3z^2\,dxdt+\lambda^2\mathbb{E}\int_\Sigma \theta^2\varphi^2z^2\,d\sigma dt\\
&\leq\, C\Bigg[\lambda\mathbb{E}\int_\Sigma \theta^2\varphi F_2^2\,d\sigma dt+\mathbb{E}\int_Q \theta^2 F_1^2\,dxdt+\lambda^2\mathbb{E}\int_Q \theta^2\varphi^2\vert F\vert^2\,dxdt\\&\quad\quad\;\;\,+\lambda^3\mathbb{E}\int_{Q_0} \theta^2\varphi^3z^2\,dxdt+\lambda^2\mathbb{E}\int_Q \theta^2\varphi^2 Z^2\,dxdt\Bigg],
\end{aligned}
\end{align}
for $\lambda\geq C(T+T^2)$.

On the other hand, computing $d\big|\theta\varphi^\frac{1}{2}z\big|^2_{L^2(G)}$, integrating the obtained equality on $(0,T)$ and taking the expectation on both sides, we arrive at
\begin{align}\label{2.31012}
\begin{aligned}
2\mathbb{E}\int_Q \theta^2\varphi \mathcal{A}\nabla z\cdot\nabla z \,dxdt=&\,-\mathbb{E}\int_Q (\theta^2\varphi)_tz^2\,dxdt-2\mathbb{E}\int_Q z\mathcal{A}\nabla z\cdot\nabla(\theta^2\varphi)\,dxdt\\
&\,
+2\mathbb{E}\int_\Sigma \theta^2\varphi F_2z \,d\sigma dt-2\mathbb{E}\int_Q \theta^2\varphi F_1z \,dxdt\\
&\,+2\mathbb{E}\int_Q zF\cdot\nabla(\theta^2\varphi)\,dxdt+2\mathbb{E}\int_Q \theta^2\varphi F\cdot\nabla z \,dxdt\\&\,-\mathbb{E}\int_Q \theta^2\varphi Z^2\,dxdt.
\end{aligned}
\end{align}
Recalling \eqref{2.301}, it is easy to check that for a large $\lambda\geq CT^2$, we have
\begin{equation}\label{2.32012}
    \vert (\theta^2\varphi)_t\vert\leq CT\lambda\theta^2\varphi^3,\,\,\;\qquad\;\; \vert\nabla(\theta^2\varphi)\vert\leq C\lambda\theta^2\varphi^2.
\end{equation}
Using \eqref{2.32012} in the right-hand side of \eqref{2.31012}, we deduce that for $\lambda\geq CT^2$
\begin{align}
\begin{aligned}
\mathbb{E}\int_Q \theta^2\varphi\vert\nabla z\vert^2\,dxdt\leq&\, CT\lambda\mathbb{E}\int_Q \theta^2\varphi^3z^2\,dxdt\\
&+C\lambda\mathbb{E}\int_Q \theta^2\varphi^2\vert z\vert\vert\nabla z\vert \,dxdt+C\mathbb{E}\int_\Sigma \theta^2\varphi\vert F_2\vert\vert z\vert \,d\sigma dt\\
\label{2.017}&
    +C\mathbb{E}\int_Q \theta^2\varphi\vert F_1\vert\vert z\vert \,dxdt+C\lambda\mathbb{E}\int_Q \theta^2\varphi^2\vert F\vert\vert z\vert \,dxdt\\
&+C\mathbb{E}\int_Q \theta^2\varphi\vert F\vert\vert\nabla z\vert \,dxdt.
\end{aligned}
\end{align}
By Young's inequality, \eqref{2.017} implies that for all $\varepsilon>0$
\begin{align}
\begin{aligned}
&\,\mathbb{E}\int_Q \theta^2\varphi\vert\nabla z\vert^2\,dxdt\\
&\leq \varepsilon\,\mathbb{E}\int_Q \theta^2\varphi\vert\nabla z\vert^2\,dxdt+C(\varepsilon)\lambda^2\mathbb{E}\int_Q \theta^2\varphi^3z^2\,dxdt+CT\lambda \mathbb{E}\int_Q \theta^2\varphi^3z^2\,dxdt\\
&+C\mathbb{E}\int_\Sigma \theta^2\varphi z^2 \,d\sigma dt+C\mathbb{E}\int_\Sigma \theta^2\varphi F_2^2 \,d\sigma dt+
C\lambda\mathbb{E}\int_Q \theta^2\varphi^2 z^2\,dxdt\\
\label{2.018}&\quad+C\lambda^{-1}\mathbb{E}\int_Q \theta^2 F_1^2 \,dxdt+C\lambda^2\mathbb{E}\int_Q \theta^2\varphi^3 z^2 \,dxdt+C\mathbb{E}\int_Q \theta^2\varphi\vert F\vert^2\,dxdt
\\
&\quad+C(\varepsilon)\mathbb{E}\int_Q \theta^2\varphi\vert F\vert^2 \,dxdt.
\end{aligned}
\end{align}
Taking a small enough $\varepsilon>0$ in \eqref{2.018}, then multiplying the obtained inequality by $\lambda$, we get
\begin{align}\label{2.01836}
\begin{aligned}
\lambda\mathbb{E}\int_Q \theta^2\varphi\vert\nabla z\vert^2\,dxdt\leq&\, C\lambda^3\mathbb{E}\int_Q \theta^2\varphi^3z^2\,dxdt+CT\lambda^2\mathbb{E}\int_Q \theta^2\varphi^3z^2\,dxdt\\
&+C\lambda\mathbb{E}\int_\Sigma \theta^2\varphi z^2 \,d\sigma dt+C\lambda\mathbb{E}\int_\Sigma \theta^2\varphi F_2^2 \,d\sigma dt\\
&+C\lambda^2\mathbb{E}\int_Q \theta^2\varphi^2 z^2\,dxdt+C\mathbb{E}\int_Q \theta^2 F_1^2 \,dxdt\\
&+C\lambda\mathbb{E}\int_Q \theta^2\varphi\vert F\vert^2\,dxdt.
\end{aligned}
\end{align}
We now choose a large $\lambda\geq C(T+T^2)$ in \eqref{2.01836}, we conclude that
\begin{align}\label{2.019}
\begin{aligned}
\lambda\mathbb{E}\int_Q \theta^2\varphi\vert\nabla z\vert^2\,dxdt\leq&\, C\Bigg[\lambda^3\mathbb{E}\int_{Q} \theta^2\varphi^3z^2\,dxdt+\lambda^2\mathbb{E}\int_\Sigma \theta^2\varphi^2z^2\,d\sigma dt\\&\quad\;+\mathbb{E}\int_Q \theta^2 F_1^2\,dxdt
+\lambda\mathbb{E}\int_\Sigma \theta^2\varphi F_2^2\,d\sigma dt\\&\quad\;+\lambda^2\mathbb{E}\int_Q \theta^2\varphi^2\vert F\vert^2\,dxdt\Bigg].
\end{aligned}
\end{align}
Finally, by combining \eqref{2.014} and \eqref{2.019}, we deduce the desired Carleman estimate \eqref{1.014}. This completes the proof of Theorem \ref{thm02.1}.
\end{proof}
\section{Carleman estimate for forward stochastic parabolic equations}\label{sec3} 
This section is addressed to prove Theorem \ref{thm1.1}. Here, we utilize the same weight functions $\theta$ and $\varphi$ defined in \eqref{2.2012}. For any fixed $F_1,F_2\in L_\mathcal{F}^2(0,T;L^2(G))$, $F_3\in L_\mathcal{F}^2(0,T;L^2(\Sigma))$, $F\in L_\mathcal{F}^2(0,T;L^2(G;\mathbb{R}^N))$ and $z_0\in L^2(G)$, let $z$ be the corresponding solution of \eqref{3.010}. By applying the duality method, we first consider $(y,Y)$ the weak solution of the following controlled backward stochastic parabolic equation
\begin{equation}\label{3.6}
\begin{cases}
\begin{array}{ll}
dy + \nabla\cdot(\mathcal{A}\nabla y) \,dt = (\lambda^3\theta^2\varphi^3z + \mathbbm{1}_{G_0}u) \,dt + Y \,dW(t) &\textnormal{in}\,\,Q,\\
\partial^\mathcal{A}_\nu y=-\lambda^2\theta^2\varphi^2 z &\textnormal{on}\,\,\Sigma,\\
y(T)=0 &\textnormal{in}\,\,G,
\end{array}
\end{cases}
\end{equation}
where $u\in L_\mathcal{F}^2(0,T;L^2(G_0))$ is the control process. We establish the following controllability result for equation \eqref{3.6}, where its proof is provided in Appendix \ref{appendsecB}.
\begin{prop}\label{prop3.1}
There exists a control $\widehat{u}\in L^2_\mathcal{F}(0,T;L^2(G_0))$ such that the associated solution $(\widehat{y},\widehat{Y})\in (L^2_\mathcal{F}(\Omega;C([0,T];L^2(G)))\bigcap L^2_\mathcal{F}(0,T;H^1(G)))\times L^2_\mathcal{F}(0,T;L^2(G))$ of \eqref{3.6} satisfies that 
$$\widehat{y}(0,\cdot)=0\;\; \textnormal{in} \;G,\;\;\mathbb{P}\textnormal{-a.s.}$$
Furthermore, there exists $\mu_0\geq1$ such that for $\mu=\mu_0$, one can find a positive constant $C$ depending only on $G$, $G_0$, $\mu_0$ and $\mathcal{A}$ so that for all $\lambda\geq C(T+T^2)$, it holds that
\begin{align}
\begin{aligned}
&\,\lambda^{-3} \mathbb{E}\int_{Q} \theta^{-2}\varphi^{-3}\widehat{u}^2 \,dx dt +\mathbb{E}\int_Q \theta^{-2}\widehat{y}^2 \,dx dt+\lambda^{-1} \mathbb{E}\int_\Sigma \theta^{-2}\varphi^{-1} \widehat{y}^2 \,d\sigma dt\\
&+\lambda^{-2}\mathbb{E}\int_Q \theta^{-2}\varphi^{-2}\vert\nabla\widehat{y}\vert^2\,dx dt+\lambda^{-2}\mathbb{E}\int_Q \theta^{-2}\varphi^{-2}\widehat{Y}^2\,dx dt\\
\label{3.7}&
\leq C\,\Bigg[\lambda^3 \mathbb{E}\int_Q \theta^2\varphi^3 z^2 \,dxdt+\lambda^2 \mathbb{E}\int_\Sigma \theta^2\varphi^2 z^2 \,d\sigma dt\Bigg].
\end{aligned}
\end{align}
\end{prop}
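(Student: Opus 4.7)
The plan is to prove Proposition~\ref{prop3.1} via the Hilbert Uniqueness Method, reducing the null controllability of \eqref{3.6} to an observability inequality for its (forward) adjoint, which in turn follows pathwise from a known deterministic Carleman estimate. The formal adjoint of \eqref{3.6} is the forward equation
\begin{equation*}
d\zeta - \nabla\cdot(\mathcal{A}\nabla\zeta)\,dt = 0 \ \textnormal{in } Q,\qquad \partial^{\mathcal{A}}_\nu \zeta = 0 \ \textnormal{on } \Sigma,\qquad \zeta(0)=\zeta_0 \in L^2(G),
\end{equation*}
which carries no It\^o term and can therefore be treated as a deterministic parabolic equation with random coefficient $\mathcal{A}$. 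I would apply \cite[Theorem 1]{fernandez2006null} sample-wise and take expectation to obtain, for suitable $\mu$ and $\lambda\ge C(T+T^2)$, the observability estimate
\begin{equation*}
\lambda^3\mathbb{E}\int_Q \theta^2\varphi^3\zeta^2\,dxdt + \lambda\mathbb{E}\int_Q\theta^2\varphi|\nabla\zeta|^2\,dxdt + \lambda^2\mathbb{E}\int_\Sigma\theta^2\varphi^2\zeta^2\,d\sigma dt \le C\lambda^3\mathbb{E}\int_{Q_0}\theta^2\varphi^3\zeta^2\,dxdt.
\end{equation*}

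Next I would construct $\widehat{u}$ by a penalized duality argument. Applying Itô's formula to $(\widehat{y},\zeta)_{L^2(G)}$, invoking Green's formula together with the Robin/Neumann boundary conditions and the terminal condition $\widehat{y}(T)=0$, one obtains the duality identity
\begin{equation*}
-\mathbb{E}\int_G \widehat{y}(0)\zeta_0\,dx = \mathbb{E}\int_0^T\!\!\int_{G_0}\widehat{u}\,\zeta\,dxdt + \mathbb{E}\int_Q\lambda^3\theta^2\varphi^3 z\,\zeta\,dxdt + \mathbb{E}\int_\Sigma\lambda^2\theta^2\varphi^2 z\,\zeta\,d\sigma dt.
\end{equation*}
For each $\varepsilon>0$ I would minimize over $\zeta_0\in L^2(G)$ the functional
\begin{equation*}
J_\varepsilon(\zeta_0) = \tfrac{1}{2}\mathbb{E}\int_0^T\!\!\int_{G_0}\lambda^{-3}\theta^{-2}\varphi^{-3}\zeta^2\,dxdt + \varepsilon|\zeta_0|_{L^2(G)} + \mathbb{E}\int_Q\lambda^3\theta^2\varphi^3 z\,\zeta\,dxdt + \mathbb{E}\int_\Sigma\lambda^2\theta^2\varphi^2 z\,\zeta\,d\sigma dt,
\end{equation*}
whose coercivity is granted by the observability estimate above and whose Euler--Lagrange equation identifies $\widehat{u}^\varepsilon = -\lambda^3\theta^2\varphi^3\,\widehat{\zeta}^{\,\varepsilon}\mathbbm{1}_{G_0}$. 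Letting $\varepsilon\to 0^+$, a standard compactness argument would produce a control $\widehat{u}$ driving $\widehat{y}(0)$ to zero and satisfying the bound on the first term of \eqref{3.7}.

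Finally, the remaining quantities on the left-hand side of \eqref{3.7} -- bounds on $\widehat{y}$, $\nabla\widehat{y}$, $\widehat{Y}$, and the trace of $\widehat{y}$ on $\Sigma$ -- would be recovered by a weighted energy estimate: apply Itô's formula to $|\theta^{-1}\varphi^{-1/2}\widehat{y}|^2_{L^2(G)}$ (or a similar weighted functional), integrate by parts the elliptic term using the Robin condition of \eqref{3.6}, invoke the trace inequality \eqref{estongamma} to control the boundary contribution, and absorb crossed terms via Young's inequality against the already established bound on $\widehat{u}$. The main obstacle I anticipate is calibrating the weight powers in this final step so that the bounds on $\widehat{Y}$ and $\nabla\widehat{y}$ carry exactly the $\lambda^{-2}\theta^{-2}\varphi^{-2}$ weight prescribed by \eqref{3.7}, while simultaneously ensuring that the nonhomogeneous boundary contribution arising from $-\lambda^2\theta^2\varphi^2 z$ can be absorbed into the right-hand side; this requires careful tracking of the $\lambda$-powers coming from the time and spatial derivatives of $\theta,\varphi$ (governed by \eqref{2.301}) and from the Itô correction $|\widehat{Y}|^2$, since a mismatch of weights would generate boundary or source terms incompatible with the target estimate \eqref{3.7}.
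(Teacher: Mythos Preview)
Your overall strategy---HUM driven by the pathwise deterministic Carleman estimate for the random homogeneous adjoint, followed by a weighted energy identity---is the same as the paper's, but your penalized functional is missing the key ingredient that makes the full estimate \eqref{3.7} close. First a minor point: the quadratic term in your $J_\varepsilon$ should carry the weight $\lambda^{3}\theta^{2}\varphi^{3}$, not $\lambda^{-3}\theta^{-2}\varphi^{-3}$; with the latter the integral is $+\infty$ for every $\zeta_0\neq 0$ (since $\theta^{-2}$ blows up exponentially at $t=0$), and in any case only the former is consistent with the Euler--Lagrange identification $\widehat{u}^\varepsilon=-\lambda^{3}\theta^{2}\varphi^{3}\widehat{\zeta}^{\,\varepsilon}\mathbbm{1}_{G_0}$ you wrote. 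More substantially, your functional contains no penalty on the state, so the optimality system only delivers the bound on $\lambda^{-3}\mathbb{E}\int_{Q_0}\theta^{-2}\varphi^{-3}\widehat{u}^2$. The paper instead minimizes over controls a functional that \emph{also} contains the terms $\tfrac12\mathbb{E}\int_Q\theta_\varepsilon^{-2}y^2$ and $\tfrac12\lambda^{-1}\mathbb{E}\int_\Sigma\theta_\varepsilon^{-2}\varphi^{-1}y^2$; these force the associated adjoint $r_\varepsilon$ to be \emph{inhomogeneous}, and applying the Carleman estimate to $r_\varepsilon$ together with the duality identity yields the bounds on $\mathbb{E}\int_Q\theta^{-2}\widehat{y}^2$ and $\lambda^{-1}\mathbb{E}\int_\Sigma\theta^{-2}\varphi^{-1}\widehat{y}^2$ directly.

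Your plan to recover those two terms by a weighted energy estimate alone does not close. Computing $d\big(\theta^{-2}\varphi^{-a}\widehat{y}^{\,2}\big)$ and using \eqref{2.301} always produces, via $(\theta^{-2}\varphi^{-a})_t$, a term of size $CT\lambda\,\mathbb{E}\int_Q\theta^{-2}\varphi^{\,2-a}\widehat{y}^{\,2}$ on the uncontrolled side; in particular, for the choice $a=2$ (needed to get the gradient and $\widehat{Y}$ terms with the correct $\theta^{-2}\varphi^{-2}$ weight) this is $CT\lambda\,\mathbb{E}\int_Q\theta^{-2}\widehat{y}^{\,2}$, and the cross term $\widehat{y}\,\mathcal{A}\nabla\widehat{y}\cdot\nabla(\theta^{-2}\varphi^{-2})$ contributes another $C\lambda^{2}\mathbb{E}\int_Q\theta^{-2}\widehat{y}^{\,2}$. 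The boundary contribution likewise reduces to $\lambda^{-1}\mathbb{E}\int_\Sigma\theta^{-2}\varphi^{-1}\widehat{y}^{\,2}$. These are precisely the quantities you have no independent bound for, and no choice of $a$ avoids them. The fix is to include the weighted state penalties in $J_\varepsilon$ (equivalently, to work with an inhomogeneous adjoint as the paper does) and to use the regularized weights $\theta_\varepsilon$ of \eqref{alptheeps} so that all computations are legitimate before passing to the limit $\varepsilon\to 0$.
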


Based on Proposition \ref{prop3.1}, we now prove Theorem \ref{thm1.1}.
\begin{proof}[Proof of Theorem \ref{thm1.1}]
Set $\mu=\mu_0$ provided in Proposition \ref{prop3.1}, and let $(\widehat{y},\widehat{Y})$ be the solution of equation \eqref{3.6} with control $\widehat{u}$ obtained in Proposition \ref{prop3.1}. By Itô's formula, we obtain that
\begin{align*}
\begin{aligned}
    &\,\lambda^3\mathbb{E}\int_Q \theta^2\varphi^3z^2 \,dxdt+\lambda^2\mathbb{E}\int_\Sigma \theta^2\varphi^2z^2 \,d\sigma dt \\
    &=-\mathbb{E}\int_\Sigma F_3 \widehat{y}\,d\sigma dt-\mathbb{E}\int_Q (\widehat{y}F_1-\nabla\widehat{y}\cdot F+\mathbbm{1}_{G_0}\widehat{u}z+\widehat{Y}F_2)\,dxdt.
    \end{aligned}
\end{align*}
Hence, in terms of Young's inequality, it follows that for any $\varepsilon>0$
\begin{align}
\begin{aligned}
&\,\lambda^3\mathbb{E}\int_Q \theta^2\varphi^3z^2 \,dxdt+\lambda^2\mathbb{E}\int_\Sigma \theta^2\varphi^2z^2 \,d\sigma dt\\
&\leq \varepsilon\Bigg[\lambda^{-1}\mathbb{E}\int_\Sigma \theta^{-2}\varphi^{-1}\widehat{y}^2 \,d\sigma dt+\mathbb{E}\int_Q \theta^{-2}\widehat{y}^2 \,dxdt+\lambda^{-2}\mathbb{E}\int_Q \theta^{-2}\varphi^{-2}\vert\nabla\widehat{y}\vert^2 \,dxdt\\
&\qquad\,+\lambda^{-3}\mathbb{E}\int_{Q} \theta^{-2}\varphi^{-3} \widehat{u}^2 \,dxdt+\lambda^{-2}\mathbb{E}\int_Q \theta^{-2}\varphi^{-2} \widehat{Y}^2 \,dxdt\Bigg]\\
\label{2.550}&\quad+C(\varepsilon)\Bigg[\lambda\mathbb{E}\int_\Sigma \theta^2\varphi F_3^2 \,d\sigma dt+\mathbb{E}\int_Q \theta^2F_1^2 \,dxdt+\lambda^2\mathbb{E}\int_Q \theta^2\varphi^2\vert F\vert^2 \,dxdt\\
&\qquad\qquad\;\;+\lambda^3\mathbb{E}\int_{Q_0} \theta^2\varphi^3z^2 \,dxdt+\lambda^2\mathbb{E}\int_Q \theta^2\varphi^2 F_2^2 \,dxdt\Bigg].
\end{aligned}
\end{align}
Using the inequality \eqref{3.7} and choosing a small enough $\varepsilon>0$, the estimate 
\eqref{2.550} provides that for any $\lambda\geq C(T+T^2)$
\begin{align}
\begin{aligned}\label{ine4.8}
&\,\lambda^3\mathbb{E}\int_Q \theta^2\varphi^3z^2 \,dxdt+\lambda^2\mathbb{E}\int_\Sigma \theta^2\varphi^2z^2 \,d\sigma dt\\
&\leq\, C\Bigg[\lambda^3\mathbb{E}\int_{Q_0} \theta^2\varphi^3z^2 \,dxdt+\mathbb{E}\int_Q \theta^2F_1^2 \,dxdt+\lambda\mathbb{E}\int_\Sigma \theta^2\varphi F_3^2 \,d\sigma dt\\
&\qquad\;\;+\lambda^2\mathbb{E}\int_Q \theta^2\varphi^2\vert F\vert^2 \,dxdt+\lambda^2\mathbb{E}\int_Q \theta^2\varphi^2 F_2^2 \,dxdt\Bigg].
\end{aligned}
\end{align}

On the other hand, computing $d\big|\theta\varphi^\frac{1}{2}z\big|^2_{L^2(G)}$, integrating the equality on $(0,T)$ and taking the expectation on both sides, we conclude that
\begin{align}
\begin{aligned}
2\mathbb{E}\int_Q \theta^2\varphi \mathcal{A}\nabla z\cdot\nabla z\,d xdt=&\,\mathbb{E}\int_Q (\theta^2\varphi)_tz^2\,dxdt-2\mathbb{E}\int_Q z\mathcal{A}\nabla z\cdot\nabla(\theta^2\varphi) \,dxdt\\
\label{2.6}& -2\mathbb{E}\int_Q zF\cdot\nabla(\theta^2\varphi)\,dxdt-2\mathbb{E}\int_Q \theta^2\varphi F\cdot\nabla z \,dxdt\\
&+2\mathbb{E}\int_Q \theta^2\varphi zF_1\,dxdt+2\mathbb{E}\int_\Sigma \theta^2\varphi zF_3\,d\sigma dt+\mathbb{E}\int_Q \theta^2\varphi F_2^2\,dxdt.
\end{aligned}
\end{align}
Observe that for a large $\lambda\geq CT^2$, we have
\begin{equation}\label{2.7}
    \vert (\theta^2\varphi)_t\vert\leq CT\lambda\theta^2\varphi^3,\;\qquad\quad \vert\nabla(\theta^2\varphi)\vert\leq C\lambda\theta^2\varphi^2.
\end{equation}
Combining \eqref{2.6} and \eqref{2.7}, we deduce that
\begin{align}
\begin{aligned}
&\,\mathbb{E}\int_Q \theta^2\varphi\vert\nabla z\vert^2\,dxdt\\
&\leq CT\lambda \mathbb{E}\int_Q \theta^2\varphi^3z^2\,dxdt+C\lambda\mathbb{E}\int_Q \theta^2\varphi^2\vert z\vert\vert\nabla z\vert \,dxdt\\
\label{2.8}&
\quad+C\lambda\mathbb{E}\int_Q \theta^2\varphi^2\vert F\vert\vert z\vert \,dxdt+C\mathbb{E}\int_Q \theta^2\varphi\vert F\vert\vert\nabla z\vert \,dxdt\\
& \quad+C\mathbb{E}\int_Q \theta^2\varphi\vert F_1\vert\vert z\vert \,dxdt+C\mathbb{E}\int_\Sigma \theta^2\varphi\vert F_3\vert\vert z\vert \,d\sigma dt+C\mathbb{E}\int_Q \theta^2\varphi F_2^2\,dxdt.
\end{aligned}
\end{align}
By applying Young's inequality on the right-hand side of \eqref{2.8}, we absorb the gradient terms using the left-hand side, then we arrive at 
\begin{align}
\begin{aligned}
\mathbb{E}\int_Q \theta^2\varphi\vert\nabla z\vert^2\,dxdt\leq&\, C\Bigg[T\lambda \mathbb{E}\int_Q \theta^2\varphi^3z^2\,dxdt+\lambda^2\mathbb{E}\int_Q \theta^2\varphi^3z^2\,dxdt\\
\label{2.9}&\quad\;
+\mathbb{E}\int_Q \theta^2\varphi\vert F\vert^2\,dxdt+\lambda^{-1}\mathbb{E}\int_Q \theta^2 F_1^2\,dxdt\\
&\quad\;+\lambda\mathbb{E}\int_Q \theta^2\varphi^2z^2\,dxdt+\lambda\mathbb{E}\int_\Sigma \theta^2\varphi^2z^2\,d\sigma dt\\
&\quad\;+\lambda^{-1}\mathbb{E}\int_\Sigma \theta^2 F_3^2\,d\sigma dt+\mathbb{E}\int_Q \theta^2\varphi F_2^2\,dxdt\Bigg].
\end{aligned}
\end{align}
Finally, multiplying \eqref{2.9} by $\lambda$, then combining the resulting inequality with \eqref{ine4.8} and choosing a sufficiently large $\lambda\geq C(T+T^2)$, we deduce the desired Carleman estimate \eqref{3.202002}. This concludes the proof of Theorem \ref{thm1.1}.
\end{proof}
\section{Observability inequalities for adjoint equations}\label{sec4}
This section is devoted to establishing the observability inequalities for the adjoint equations of \eqref{1.1} and \eqref{1.2}. Through the classical duality argument, such inequalities will play a crucial role in proving our controllability results for equations \eqref{1.1} and \eqref{1.2}.

Firstly, we consider the following adjoint equation of \eqref{1.1}
\begin{equation}\label{1.3}
\begin{cases}
\begin{array}{ll}
dz + \nabla\cdot(\mathcal{A}\nabla z) \,dt = \left[-a_1 z-a_2Z+\nabla\cdot(zB_1+ZB_2)\right] \,dt + Z \,dW(t) &\textnormal{in}\,\,Q,\\
\partial^\mathcal{A}_\nu z+\beta z-zB_1\cdot\nu-ZB_2\cdot\nu=0 &\textnormal{on}\,\,\Sigma,\\
z(T)=z_T &\textnormal{in}\,\,G,
			\end{array}
		\end{cases}
\end{equation}
where $z_T\in L^2_{\mathcal{F}_T}(\Omega;L^2(G))$. From Section \ref{sec2w}, we deduce that \eqref{1.3} is well-posed i.e., for any $z_T\in L^2_{\mathcal{F}_T}(\Omega;L^2(G))$, there exists a unique weak solution $$(z,Z)\in \Big(L^2_\mathcal{F}(\Omega;C([0,T];L^2(G)))\bigcap L^2_\mathcal{F}(0,T;H^1(G))\Big)\times L^2_\mathcal{F}(0,T;L^2(G)),$$
that depends continuously on $z_T$.  We now prove the following observability inequality of \eqref{1.3}. 
\begin{thm}\label{thm4.1}
For any $z_T\in L^2_{\mathcal{F}_T}(\Omega;L^2(G))$, the corresponding solution $(z,Z)$ of equation \eqref{1.3} satisfies that
\begin{equation}\label{1.04}
\vert z(0)\vert^2_{L^2(G)} \leq e^{CK}\,\Bigg[\mathbb{E}\int_{Q_0} z^2 \,dxdt+\mathbb{E}\int_{Q} Z^2 \,dxdt\Bigg],
\end{equation}    
where $K$ and $C$ are the same constants as in Theorem \ref{thm01.1}.
\end{thm}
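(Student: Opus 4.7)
The plan is to derive Theorem~\ref{thm4.1} from the Carleman estimate of Theorem~\ref{thm02.1} combined with a backward-in-time energy estimate for \eqref{1.3}. First I would cast \eqref{1.3} as a special case of \eqref{202.3} by setting
\[
F_1 = -a_1 z - a_2 Z, \qquad F = zB_1 + ZB_2, \qquad F_2 = -\beta z,
\]
so that the Robin condition $\partial_\nu^{\mathcal{A}}z+\beta z-zB_1\cdot\nu-ZB_2\cdot\nu=0$ rewrites as $\partial_\nu^{\mathcal{A}}z-F\cdot\nu=F_2$. The $L^\infty$ assumptions on the coefficients guarantee $F_1,F_2,F$ lie in the required $L^2_\mathcal{F}$ spaces, so Theorem~\ref{thm02.1} applies.

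Next, the right-hand side of \eqref{1.014} splits via $F_1^2 \le 2|a_1|_\infty^2 z^2 + 2|a_2|_\infty^2 Z^2$, $F_2^2=\beta^2 z^2$ and $|F|^2 \le 2|B_1|_\infty^2 z^2 + 2|B_2|_\infty^2 Z^2$ into ``$z$-terms'' and ``$Z$-terms''. Using $\varphi \ge C T^{-2}$, all $z$-terms can be absorbed into the Carleman left-hand side provided
\[
\lambda \;\ge\; \lambda_\star := C(T+T^2)\bigl(1 + |a_1|_\infty^{2/3} + |\beta|_\infty^2 + |B_1|_\infty^2\bigr).
\]
For the surviving $Z$-terms, the elementary inequality $\sup_{s>0} s^k e^{-Cs\lambda} \le C_k \lambda^{-k}$ gives $\lambda^k\theta^2\varphi^k \le C_k$ on $Q$ for $k=0,2,3$, so after fixing $\lambda=\lambda_\star$ they contribute at most $C(1+|a_2|_\infty^2+|B_2|_\infty^2)\mathbb{E}\int_Q Z^2$, and the interior Carleman term is bounded above by $C\mathbb{E}\int_{Q_0}z^2$. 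Restricting the left-hand side of \eqref{1.014} to $t\in[T/4,3T/4]$, where $\theta^2\varphi^3 \ge CT^{-6}e^{-C\lambda_\star/T^2}$, and using that $\lambda_\star/T^2 \le CK$, $T^6\lambda_\star^{-3}\le CT^3 \le e^{CK}$, together with the bound $1+|a_2|_\infty^2+|B_2|_\infty^2 \le C e^{C(|a_2|_\infty^{2/3}+|B_2|_\infty^2)} \le e^{CK}$, yields
\[
\mathbb{E}\int_{T/4}^{3T/4}\!\!\int_G z^2\,dx\,dt \;\le\; e^{CK}\Bigl(\mathbb{E}\int_{Q_0} z^2\,dx\,dt + \mathbb{E}\int_Q Z^2\,dx\,dt\Bigr).
\]

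The final step is a backward energy estimate. Applying Itô's formula from Lemma~\ref{lm1.1} to $|z(t)|^2_{L^2(G)}$, taking expectation and integrating by parts, the boundary contribution reduces to $2\int_\Gamma \beta z^2\,d\sigma$ by the Robin condition; this is controlled by the trace estimate \eqref{estongamma} and Young's inequality so as to absorb $\|\nabla z\|^2$ using the coercivity of $\mathcal{A}$. Standard Young-type bounds on the remaining cross terms yield
\[
\tfrac{d}{dt}\,\mathbb{E}\|z(t)\|^2_{L^2(G)} \;\ge\; -C_3\,\mathbb{E}\|z(t)\|^2_{L^2(G)} - C_4\,\mathbb{E}\|Z(t)\|^2_{L^2(G)},
\]
with $C_3 \le C(|\beta|_\infty+|\beta|_\infty^2+|a_1|_\infty+|a_2|_\infty^2+|B_1|_\infty^2)$ and $C_4\le C(1+|B_2|_\infty^2)$. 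Backward Gronwall, together with $C_3 T \le CT(1+|a_1|_\infty+|a_2|_\infty^2+|\beta|_\infty^2+|B_1|_\infty^2) \le CK$ and $C_4 \le e^{CK}$, gives, for every $s\in[T/4,3T/4]$,
\[
|z(0)|^2_{L^2(G)} \;\le\; e^{CK}\,\mathbb{E}\|z(s)\|^2_{L^2(G)} + e^{CK}\,\mathbb{E}\int_Q Z^2\,dx\,dt.
\]
Averaging over $s\in[T/4,3T/4]$ (which produces a factor $2/T \le e^{C/T} \le e^{CK}$) and inserting the local observability of the previous paragraph yields \eqref{1.04}.

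The main obstacle is the constant bookkeeping: one must verify that every polynomial factor in $T^{-1}$ and in the coefficient norms fits inside the prescribed $e^{CK}$. The crucial ingredients are (a) the observation $x^2 \le Ce^{Cx^{2/3}}$, which lets $|a_2|_\infty^2$ be absorbed using the $|a_2|_\infty^{2/3}$ term in $K$; (b) the inclusion of $1/T$ in $K$, so that any negative power $T^{-n}\le C_n e^{C/T}\le e^{CK}$; and (c) Young's inequality $T|\beta|_\infty \le T/2+T|\beta|_\infty^2/2$, which reconciles the ``degree~$1$'' coefficient contributions from the energy estimate with the ``degree~$2$'' terms appearing in the $T$-dependent part of $K$.
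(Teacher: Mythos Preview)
Your approach is essentially the same as the paper's: apply Theorem~\ref{thm02.1} with $F_1=-a_1z-a_2Z$, $F=zB_1+ZB_2$, $F_2=-\beta z$, absorb the $z$-contributions by enlarging $\lambda$, restrict to $[T/4,3T/4]$, and finish with a backward energy estimate based on It\^o's formula, the trace inequality \eqref{estongamma}, and Gronwall. The energy step and the averaging argument match the paper's exactly.

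There is one genuine (though easily repaired) bookkeeping slip. Your threshold $\lambda_\star = C(T+T^2)\bigl(1+|a_1|_\infty^{2/3}+|\beta|_\infty^2+|B_1|_\infty^2\bigr)$ is written as a \emph{product}, and with this choice the claim $\lambda_\star/T^2\le CK$ fails: it produces cross terms such as $|a_1|_\infty^{2/3}/T$, and for $|a_1|_\infty^{2/3}=1/T=M\to\infty$ this is $M^2$ while $K$ is only of order $M$. The correct choice is the \emph{sum} form
\[
\lambda_\star \;=\; C\Bigl[T+T^2\bigl(1+|a_1|_\infty^{2/3}+|\beta|_\infty^2+|B_1|_\infty^2\bigr)\Bigr],
\]
which is what the actual absorption conditions ($\lambda\ge C(T+T^2)$, $\lambda\ge CT^2|a_1|_\infty^{2/3}$, etc.) give; then $\lambda_\star/T^2=C\bigl[1/T+1+|a_1|_\infty^{2/3}+|\beta|_\infty^2+|B_1|_\infty^2\bigr]\le CK$ as you need. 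The paper uses exactly this sum form (their $\widetilde\lambda$ in \eqref{lamt1}).

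A small organizational difference worth noting: the paper also puts $|a_2|_\infty^{2/3}$ and $|B_2|_\infty^2$ into $\widetilde\lambda$, so that the $Z$-contributions are absorbed into a single weighted term $C\mathbb{E}\int_Q\theta^2\varphi^3 Z^2$ before any weight is removed. You instead keep $\lambda_\star$ free of $a_2,B_2$ and kill the weights on the $Z$-terms via the uniform bound $\lambda^k\theta^2\varphi^k\le C_k$, paying a prefactor $1+|a_2|_\infty^2+|B_2|_\infty^2$ which you then absorb into $e^{CK}$ through $x^2\le Ce^{Cx^{2/3}}$. Both routes are valid and lead to the same $K$; the paper's version avoids that last exponential trick, while yours avoids enlarging $\lambda$.
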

\begin{proof}
Using Carleman estimate \eqref{1.014} with $$F_1=-a_1z-a_2Z, \,\quad F_2=-\beta z\,\quad \textnormal{and} \,\quad F=zB_1+ZB_2,$$
we deduce that for $\lambda\geq C(T+T^2)$, it holds that
\begin{align}
\begin{aligned}
&\,\lambda^3\mathbb{E}\int_Q \theta^2\varphi^3z^2\,dxdt+\lambda^2\mathbb{E}\int_\Sigma \theta^2\varphi^2z^2\,d\sigma dt\\
&\leq  C\lambda^3\mathbb{E}\int_{Q_0} \theta^2\varphi^3z^2\,dxdt+C\mathbb{E}\int_Q \theta^2 \vert a_1z+a_2Z\vert^2\,dxdt+C\lambda\mathbb{E}\int_\Sigma \theta^2\varphi |\beta z|^2\,d\sigma dt\\
\label{1.0142}&\quad +C\lambda^2\mathbb{E}\int_Q \theta^2\varphi^2\vert zB_1+ZB_2\vert^2\,dxdt+C\lambda^2\mathbb{E}\int_Q \theta^2\varphi^2Z^2\,dxdt.
\end{aligned}
\end{align}
It is now sufficient to choose $\lambda\geq CT^2(\vert a_1\vert_\infty^{2/3}+\vert a_2\vert_\infty^{2/3})$ to obtain
\begin{equation}\label{4.20120}
    C\mathbb{E}\int_Q \theta^2 \vert a_1z+a_2Z\vert^2\,dxdt \leq \frac{1}{4}\lambda^3\mathbb{E}\int_Q \theta^2\varphi^3z^2\,dxdt +  C\lambda^{3}\mathbb{E}\int_Q \theta^2\varphi^3Z^2\,dxdt.
\end{equation}
Also, by $\lambda\geq CT^2|\beta|_\infty^2$, one can find that
\begin{equation}\label{5.33012}
C\lambda\mathbb{E}\int_\Sigma \theta^2\varphi |\beta z|^2\,d\sigma dt\leq \frac{1}{2}\lambda^2\mathbb{E}\int_\Sigma \theta^2\varphi^2 z^2\,d\sigma dt,
\end{equation}
and by $\lambda\geq CT^2(\vert B_1\vert_\infty^2+\vert B_2\vert_\infty^2)$, we have
\begin{align}\label{4.50120}
\begin{aligned}
&\,C\lambda^2\mathbb{E}\int_Q \theta^2\varphi^2\vert zB_1+ZB_2\vert^2\,dxdt \\
    &\leq \frac{1}{4}\lambda^3\mathbb{E}\int_Q \theta^2\varphi^3z^2\,dxdt +  C\lambda^{3}\mathbb{E}\int_Q \theta^2\varphi^3Z^2\,dxdt.
\end{aligned}
\end{align}
Combining \eqref{1.0142}, \eqref{4.20120}, \eqref{5.33012} and \eqref{4.50120}, we conclude that
\begin{equation*}
    \mathbb{E}\int_Q \theta^2 \varphi^3z^2 \,dxdt \leq C\mathbb{E}\int_{Q_0} \theta^2\varphi^3z^2\,dxdt +  C\mathbb{E}\int_Q \theta^2\varphi^3Z^2\,dxdt.
\end{equation*}
Taking  
\begin{align}\label{lamt1}
\lambda\geq\widetilde{\lambda}=C\Big[T+T^2\big(1+\vert a_1\vert_\infty^{2/3}+\vert a_2\vert_\infty^{2/3}+\vert \beta\vert_\infty^2+\vert B_1\vert_\infty^2+\vert B_2\vert_\infty^2\big)\Big],
\end{align} 
we get that
\begin{equation}\label{2.3701}
\mathbb{E}\int_{T/4}^{3T/4}\int_G \theta^2\varphi^3z^2\,dxdt\leq  C\Bigg[\mathbb{E}\int_{Q_0} \theta^2\varphi^3z^2\,dxdt+\mathbb{E}\int_Q \theta^2\varphi^3Z^2\,dxdt\Bigg].
\end{equation}
We now fix $\lambda=\widetilde{\lambda}$ given in \eqref{lamt1} and see that the function
$$t\longmapsto\min_{x\in\overline{G}}\,\big[\theta^2(t,x)\varphi^3(t,x)\big]$$
reaches its minimum in $[T/4,3T/4]$ at $t=T/4$, and
$$t\longmapsto\max_{x\in\overline{G}}\,\big[\theta^2(t,x)\varphi^3(t,x)\big]$$
reaches its maximum in $(0,T)$ at $t=T/2$. Therefore from \eqref{2.3701}, it is easy to deduce that
\begin{equation*}
     \mathbb{E}\int_{T/4}^{3T/4}\int_G z^2 \,dxdt \leq Ce^{C\widetilde{\lambda}T^{-2}} \,\Bigg[ \mathbb{E}\int_{Q_0} z^2 \,dxdt+\mathbb{E}\int_{Q} Z^2 \,dxdt\Bigg].
 \end{equation*}
This implies that
\begin{equation}\label{2.41012}
     \mathbb{E}\int_{T/4}^{3T/4}\int_G z^2 \,dxdt \leq e^{C r_1} \,\Bigg[ \mathbb{E}\int_{Q_0} z^2 \,dxdt+\mathbb{E}\int_{Q} Z^2 \,dxdt\Bigg],
 \end{equation}
 where $r_1=1+\frac{1}{T}+\vert a_1\vert_\infty^{2/3}+\vert a_2\vert_\infty^{2/3}+\vert \beta\vert_\infty^2+\vert B_1\vert_\infty^2+\vert B_2\vert_\infty^2$.
 
On the other hand, let $t\in(0,T)$, computing $d|z|^2_{L^2(G)}$, integrating the equality on $(0,t)$ and taking the mean value, we obtain that
\begin{align}\label{2.42012}
\begin{aligned}
&\,\mathbb{E}\int_G \vert z(0)\vert^2\,dx-\mathbb{E}\int_G \vert z(t)\vert^2\,dx+2c_0\mathbb{E}\int_0^t\int_G \vert\nabla z\vert^2\,dxds\\
&\leq 2|\beta|_\infty\mathbb{E}\int_0^t\int_\Gamma z^2\,d\sigma ds+2\vert a_1\vert_\infty\mathbb{E}\int_0^t\int_G z^2\,dxds+2\vert a_2\vert_\infty\mathbb{E}\int_0^t\int_G \vert z\vert\vert Z\vert \,dxds\\
&\quad\,-\mathbb{E}\int_0^t\int_G Z^2 \,dxds+2\vert B_1\vert_\infty\mathbb{E}\int_0^t\int_G \vert z\vert\vert\nabla z\vert \,dxds+2\vert B_2\vert_\infty\mathbb{E}\int_0^t\int_G \vert Z\vert\vert\nabla z\vert \,dxds.
\end{aligned}
\end{align}
Using the trace estimate \eqref{estongamma} for the first term on the right-hand side of \eqref{2.42012} and applying Young's inequality, we arrive at
\begin{align*}
\begin{aligned}
\mathbb{E}\int_G \vert z(0)\vert^2\,dx &\leq \mathbb{E}\int_G \vert z(t)\vert^2\,dx+Cr_2\mathbb{E}\int_0^t\int_G z^2\,dxds+C\vert B_2\vert_\infty^2\mathbb{E}\int_Q Z^2 \,dxdt,
\end{aligned}
\end{align*}
where $r_2=1+\vert a_1\vert_\infty+\vert a_2\vert^2_\infty+\vert \beta\vert_\infty^2+\vert B_1\vert^2_\infty$. Hence, in terms of the Gronwall inequality, it follows that
\begin{equation}\label{2.45012}
\mathbb{E}\int_G \vert z(0)\vert^2\,dx \leq e^{Cr_2T}\mathbb{E}\int_G \vert z(t)\vert^2\,dx+C\vert B_2\vert_\infty^2\mathbb{E}\int_Q Z^2 \,dxdt.
\end{equation}
Integrating \eqref{2.45012} on $(T/4,3T/4)$ and combining the obtained inequality with \eqref{2.41012}, we end up with 
\begin{equation}\label{4.130012}
    \mathbb{E}\int_G \vert z(0)\vert^2\,dx \leq e^{C\big(1+\frac{1}{T}+r_1+r_2T+\vert B_2\vert_\infty^2\big)}\Bigg[\mathbb{E}\int_{Q_0} z^2\,dxdt+\mathbb{E}\int_Q Z^2\,dxdt\Bigg].
\end{equation}
Finally, from \eqref{4.130012}, it is straightforward to deduce the desired observability inequality \eqref{1.04}.
\end{proof}

Let us now introduce the following adjoint equation of \eqref{1.2}
	\begin{equation}\label{1.5}
		\begin{cases}
			\begin{array}{ll}
			dz-\nabla\cdot(\mathcal{A}\nabla z) \,dt = \left[-a_1z+\nabla\cdot(zB)\right]\,dt - a_2z \,dW(t)	 &\textnormal{in}\,\,Q,\\
	\partial^\mathcal{A}_\nu z+\beta z+zB\cdot\nu=0
   &\textnormal{on}\,\,\Sigma,\\
			z(0)=z_0 &\textnormal{in}\,\,G,
			\end{array}
		\end{cases}
	\end{equation}
where $z_0\in L^2(G)$. From Section \ref{sec2w}, we have the following well-posedness of \eqref{1.5}: There exists a unique weak solution $z$ of \eqref{1.5} so that
$$z\in L^2_\mathcal{F}(\Omega;C([0,T];L^2(G)))\bigcap L^2_\mathcal{F}(0,T;H^1(G)),$$
and that depends continuously on $z_0$. 
The observability inequality of \eqref{1.5} is stated as follows.
\begin{thm}\label{thm4.2}
For any $z_0\in L^2(G)$, the associated solution $z$ of \eqref{1.5} satisfies that 
\begin{equation}\label{1.8}
    \vert z(T)\vert^2_{L^2_{\mathcal{F}_T}(\Omega;L^2(G))} \leq e^{CM}\,\mathbb{E}\int_{Q_0} z^2 \,dxdt,
\end{equation}
where the constants $C$ and $M$ are similar to those defined in Theorem \ref{thm1.2}.
\end{thm}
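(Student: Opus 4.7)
The plan is to mirror the proof of Theorem \ref{thm4.1}, applying the forward Carleman estimate \eqref{3.202002} of Theorem \ref{thm1.1} to equation \eqref{1.5} rather than the backward one. Comparing the form of \eqref{1.5} with the general forward equation \eqref{3.010}, I would set
$$F_1=-a_1 z,\qquad F=zB,\qquad F_2=-a_2 z,\qquad F_3=-\beta z,$$
and substitute into \eqref{3.202002}. This produces a $z$-only inequality whose right hand side contains $\lambda^3\mathbb{E}\int_{Q_0}\theta^2\varphi^3 z^2$ together with four ``lower-order'' terms built from $|a_1|_\infty$, $|\beta|_\infty$, $|B|_\infty$ and $|a_2|_\infty$.

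Next I would absorb each of these lower-order terms into the Carleman left hand side by choosing $\lambda$ sufficiently large. Since $\theta^2\leq \lambda\theta^2\varphi$ up to constants and $\varphi\geq CT^{-2}$, the thresholds are exactly those dictated by the homogeneities in the Carleman estimate: $\lambda\geq CT^2|a_1|_\infty^{2/3}$ for the $F_1$-term, $\lambda\geq CT^2|\beta|_\infty^2$ for the boundary $F_3$-term, $\lambda\geq CT^2|B|_\infty^2$ for the $F$-term, and $\lambda\geq CT^2|a_2|_\infty^2$ for the stochastic $F_2$-term (this last one is why $|a_2|_\infty^2$ and not $|a_2|_\infty^{2/3}$ appears in $M$). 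Fixing
$$\lambda=\widetilde{\lambda}=C\bigl[T+T^2\bigl(1+|a_1|_\infty^{2/3}+|a_2|_\infty^{2}+|\beta|_\infty^{2}+|B|_\infty^{2}\bigr)\bigr],$$
restricting the time integration on the left to $[T/4,3T/4]$ where $\theta^2\varphi^3$ is bounded below, and bounding $\theta^2\varphi^3$ uniformly from above on $Q_0$, I obtain
$$\mathbb{E}\int_{T/4}^{3T/4}\!\!\int_G z^2\,dxdt\leq e^{Cr_1}\,\mathbb{E}\int_{Q_0} z^2\,dxdt,$$
with $r_1=1+\tfrac{1}{T}+|a_1|_\infty^{2/3}+|a_2|_\infty^{2}+|\beta|_\infty^{2}+|B|_\infty^{2}$.

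The second half of the argument is a forward-in-time energy estimate for \eqref{1.5} by Itô's formula applied to $|z|^2_{L^2(G)}$. For any $t\in(0,T)$ and $s\in[t,T]$, integration by parts handles the $\nabla\cdot(\mathcal{A}\nabla z)$ and $\nabla\cdot(zB)$ terms; the Robin condition cancels the $\int_\Gamma z^2 B\cdot\nu$ contribution coming from the first order term, leaving a clean boundary term $-2\int_\Gamma \beta z^2\,d\sigma$. Applying the trace inequality \eqref{estongamma} followed by Young's inequality (with $\varepsilon$ chosen small enough to be absorbed by the ellipticity constant $c_0$ against $|\nabla z|^2$), and noting that the Itô correction adds a $+|a_2 z|^2$ term (hence the quadratic $|a_2|_\infty^2$), one obtains
$$\mathbb{E}|z(T)|^2_{L^2(G)}\leq \mathbb{E}|z(t)|^2_{L^2(G)}+Cr_2\int_t^T\mathbb{E}|z(s)|^2_{L^2(G)}\,ds,$$
where $r_2=1+|a_1|_\infty+|a_2|_\infty^{2}+|\beta|_\infty^{2}+|B|_\infty^{2}$. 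Gronwall's lemma yields $\mathbb{E}|z(T)|^2_{L^2(G)}\leq e^{Cr_2 T}\mathbb{E}|z(t)|^2_{L^2(G)}$, and integrating this in $t$ over $[T/4,3T/4]$ and combining with the previous Carleman-derived estimate gives \eqref{1.8} with $M\asymp 1+\tfrac{1}{T}+r_1+r_2 T$, which matches the constant declared in Theorem \ref{thm1.2}.

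The routine part is the algebra of the $\lambda$-absorption and the Young/trace estimates. The main obstacle, or rather the point that requires most care, is the treatment of the stochastic $F_2=-a_2 z$ contribution: unlike the $a_2 Z$ term in the adjoint of the forward equation (which cost only $|a_2|^{2/3}_\infty$), here $a_2$ multiplies $z$ itself and enters the $\lambda^2\theta^2\varphi^2 F_2^2$ slot of \eqref{3.202002}, producing the quadratic $|a_2|^2_\infty$ in $M$; the same quadratic then reappears in the Itô correction, and one must verify that the two sources of $|a_2|_\infty^2$ are compatible with the declared form of $M$.
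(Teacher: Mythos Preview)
Your proposal is correct and follows essentially the same route as the paper's own proof: the same identification $F_1=-a_1z$, $F=zB$, $F_2=-a_2z$, $F_3=-\beta z$ in \eqref{3.202002}, the same absorption thresholds leading to the identical $\widetilde{\lambda}$ and $r_1$, and the same It\^o energy estimate on $(t,T)$ combined with the trace inequality \eqref{estongamma} and Gronwall to produce $r_2$. Your remark about why $|a_2|_\infty^{2}$ (rather than $|a_2|_\infty^{2/3}$) appears is exactly the point, and the paper handles it in the same way.
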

\begin{proof}
By applying Carleman estimate \eqref{3.202002} with
$$F_1=-a_1z, \quad F_2=-a_2z, \quad F_3=-\beta z\quad  \textnormal{and}\quad F=zB,$$
we deduce that for all $\lambda\geq C(T+T^2)$
\begin{align}
\begin{aligned}
&\lambda^3\mathbb{E}\int_Q \theta^2\varphi^3z^2\,dxdt + \lambda^2\mathbb{E}\int_\Sigma \theta^2\varphi^2z^2\,d\sigma dt\\
&\leq C\Bigg[\lambda^3\mathbb{E}\int_{Q_0} \theta^2\varphi^3z^2\,dxdt+\mathbb{E}\int_Q \theta^2\vert a_1z\vert^2\,dxdt+\lambda\mathbb{E}\int_\Sigma \theta^2\varphi\vert\beta z\vert^2\,d\sigma dt\\
\label{3.13}& \quad\quad\;\,+\lambda^2\mathbb{E}\int_Q \theta^2\varphi^2\vert a_2z\vert^2\,dxdt+\lambda^2\mathbb{E}\int_Q \theta^2\varphi^2\vert zB\vert^2\,dxdt\Bigg].
\end{aligned}
\end{align}
It is easy to see that for $\lambda\geq CT^2\big(\vert a_1\vert^{2/3}_\infty+\vert a_2\vert^{2}_\infty+\vert B\vert^{2}_\infty\big)$, it holds that
\begin{align}
\begin{aligned}
&\,C\mathbb{E}\int_Q \theta^2\vert a_1z\vert^2\,dxdt+C\lambda^2\mathbb{E}\int_Q \theta^2\varphi^2\vert a_2z\vert^2\,dxdt
\\
\label{3.14}&+C\lambda^2\mathbb{E}\int_Q \theta^2\varphi^2\vert zB\vert^2\,dxdt\leq\frac{1}{2}\lambda^3\mathbb{E}\int_Q \theta^2\varphi^3z^2\,dxdt,
\end{aligned}
\end{align}
and for $\lambda\geq CT^2|\beta|^2_\infty$, we have that
\begin{equation}\label{secesongam}
C\lambda\mathbb{E}\int_\Sigma \theta^2\varphi\vert\beta z\vert^2\,d\sigma dt\leq \frac{1}{2}\lambda^2\mathbb{E}\int_\Sigma \theta^2\varphi^2 z^2 \,d\sigma dt.
\end{equation}
Combining \eqref{3.13}, \eqref{3.14} and \eqref{secesongam}, we obtain that for \begin{align}\label{lamt2}
\lambda\geq\widetilde{\lambda}= C\big[T+T^2\big(1+\vert a_1\vert^{2/3}_\infty+\vert a_2\vert^{2}_\infty+\vert \beta\vert^{2}_\infty+\vert B\vert^{2}_\infty\big)\big],
\end{align}
the following estimate holds
\begin{equation}\label{3.15}
\mathbb{E}\int_Q \theta^2\varphi^3z^2\,dxdt 
 \leq C\,\mathbb{E}\int_{Q_0} \theta^2\varphi^3z^2\,dxdt.
 \end{equation}
Fixing $\lambda=\widetilde{\lambda}$ defined in \eqref{lamt2}, and  using the same optimization argument as in the proof of Theorem \ref{thm4.1}, the inequality \eqref{3.15} implies that
 \begin{equation}\label{3.18}
\mathbb{E}\int_{T/4}^{3T/4}\int_G z^2 \,dxdt \leq e^{C r_1} \, \mathbb{E}\int_{Q_0} z^2 \,dxdt,
 \end{equation}
where $r_1=1+\frac{1}{T}+\vert a_1\vert_\infty^{2/3}+\vert a_2\vert_\infty^{2}+\vert \beta\vert^{2}_\infty+\vert B\vert_\infty^2$.

For any $t\in(0,T)$, calculating $d|z|^2_{L^2(G)}$, then integrating the result on $(t,T)$, and taking the expectation, we obtain that
\begin{align*}
\begin{aligned}
&\,\mathbb{E}\int_G  z^2(T,x) \,dx-\mathbb{E}\int_G  z^2(t,x) \,dx\\
&= -2\mathbb{E}\int_t^T\int_G \mathcal{A}\nabla z\cdot\nabla z \,dxds-2\mathbb{E}\int_t^T\int_\Gamma \beta z^2\,d\sigma ds-2\mathbb{E}\int_t^T\int_G a_1z^2\,dxds\\
&\,\quad-2\mathbb{E}\int_t^T\int_G zB\cdot\nabla z \,dxds+\mathbb{E}\int_t^T\int_G  a_2^2 z^2\,dxds.
\end{aligned}
\end{align*}
Applying Young's inequality and the trace estimate \eqref{estongamma}, it follows that
\begin{equation}\label{4.24032}
\mathbb{E}\int_G  z^2(T,x) \,dx\leq\mathbb{E}\int_G  z^2(t,x) \,dx+C\big(1+|\beta|_\infty^2+\vert a_1\vert_\infty+\vert B\vert_\infty^2+\vert a_2\vert^2_\infty\big)\mathbb{E}\int_t^T\int_G z^2\,dxds.
\end{equation}
Hence, by Gronwall's inequality, the estimate \eqref{4.24032} provides that 
\begin{equation}\label{3.20}
\mathbb{E}\int_G  z^2(T,x) \,dx\leq e^{CTr_2}\mathbb{E}\int_G  z^2(t,x) \,dx,
\end{equation}
where $r_2=1+|\beta|_\infty^2+\vert a_1\vert_\infty+\vert B\vert_\infty^2+\vert a_2\vert^2_\infty$. Finally, combining \eqref{3.18} and \eqref{3.20}, we conclude the desired observability inequality  \eqref{1.8}.
\end{proof}

\section{Null controllability results}\label{sec5}
This section is devoted to proving the null controllability results for equations \eqref{1.1} and \eqref{1.2} i.e., Theorems \ref{thm01.1} and \ref{thm1.2}. Here, we only show Theorem \ref{thm01.1}, the proof of Theorem \ref{thm1.2} can be provided similarly.
\begin{proof}[Proof of Theorem \ref{thm01.1}]
Let $\varepsilon>0$ and $y_0\in L^2(G)$. Consider the following minimization problem
\begin{align}\label{05.1}
\textnormal{inf} \big\{J_\varepsilon(u,v)\,\big\vert\, (u,v)\in L^2_\mathcal{F}(0,T;L^2(G))\times L^2_\mathcal{F}(0,T;L^2(G))\big\},
\end{align}
with
$$J_\varepsilon(u,v)=\frac{1}{2}\mathbb{E}\int_Q u^2 \,dxdt+\frac{1}{2}\mathbb{E}\int_Q v^2 \,dxdt+\frac{1}{2\varepsilon}\mathbb{E}\int_G \vert y(T)\vert^2\,dx,$$
where $y$ is the solution of equation \eqref{1.1} with initial condition $y_0$ and controls $u$ and $v$. It is easy to see that the functional $J_\varepsilon$ is well-defined, continuous, strictly convex, and coercive. Then the problem \eqref{05.1} admits a unique solution $(u_\varepsilon,v_\varepsilon)\in L^2_\mathcal{F}(0,T;L^2(G))\times L^2_\mathcal{F}(0,T;L^2(G))$.

By the standard duality argument (see, e.g., \cite{imanuvilov2003carleman,lions1972some}), the solution $(u_\varepsilon,v_\varepsilon)$ can be characterized by 
\begin{align}\label{05.2}
    u_\varepsilon=-\mathbbm{1}_{G_0}z_\varepsilon\qquad\textnormal{and}\qquad v_\varepsilon=-Z_\varepsilon,
\end{align}
where $(z_\varepsilon,Z_\varepsilon)$ is the solution of the following backward stochastic parabolic equation
\begin{equation*}
\begin{cases}
\begin{array}{ll}
dz_\varepsilon + \nabla\cdot(\mathcal{A}\nabla z_\varepsilon) \,dt = \left[-a_1 z_\varepsilon-a_2Z_\varepsilon+\nabla\cdot(z_\varepsilon B_1+Z_\varepsilon B_2)\right] \,dt + Z_\varepsilon \,dW(t) &\textnormal{in}\,\,Q,\\
\partial_\nu^\mathcal{A}z_\varepsilon+\beta z_\varepsilon-z_\varepsilon B_1\cdot\nu-Z_\varepsilon B_2\cdot\nu=0 &\textnormal{on}\,\,\Sigma,\\
z_\varepsilon(T)=\frac{1}{\varepsilon}y_\varepsilon(T) &\textnormal{in}\,\,G,
			\end{array}
		\end{cases}
\end{equation*}
with $y_\varepsilon$ is the solution of \eqref{1.1} associated to controls $u_\varepsilon$ and $v_\varepsilon$. By Itô's formula, see that
\begin{align}\label{05.3}
d(y_\varepsilon, z_\varepsilon)_{L^2(G)}=(\mathbbm{1}_{G_0}u_\varepsilon, z_\varepsilon)_{L^2(G)}+(v_\varepsilon, Z_\varepsilon)_{L^2(G)}.
\end{align}
From \eqref{05.3} and \eqref{05.2}, we find that
\begin{align*}
\mathbb{E}\int_{Q_0}z_\varepsilon^2\,dxdt+\mathbb{E}\int_{Q}Z_\varepsilon^2\,dxdt+\frac{1}{\varepsilon}\mathbb{E}\int_G \vert y_\varepsilon(T)\vert^2\,dx=\mathbb{E}\int_G y_0 z_\varepsilon(0)\,dx.
\end{align*}
By Cauchy–Schwarz and Young's inequalities, it follows that
\begin{align}\label{05.5}
\begin{aligned}
&\,\mathbb{E}\int_{Q_0}z_\varepsilon^2\,dxdt+\mathbb{E}\int_{Q}Z_\varepsilon^2\,dxdt+\frac{1}{\varepsilon}\mathbb{E}\int_G \vert y_\varepsilon(T)\vert^2\,dx\\
&\leq \frac{e^{CK}}{2}\vert y_0\vert^2_{L^2(G)}+\frac{1}{2e^{CK}}\vert z_\varepsilon(0)\vert^2_{L^2(G)},
\end{aligned}
\end{align}
where $e^{CK}$ is the same constant appearing in \eqref{1.04}.\\
Combining \eqref{05.5} and \eqref{1.04} and then using \eqref{05.2}, we obtain that
\begin{align}\label{05.6}
\vert u_\varepsilon\vert^2_{L^2_\mathcal{F}(0,T;L^2(G))}+\vert v_\varepsilon\vert^2_{L^2_\mathcal{F}(0,T;L^2(G))}+\frac{2}{\varepsilon}\mathbb{E}\int_G \vert y_\varepsilon(T)\vert^2\,dx\leq e^{CK}\vert y_0\vert^2_{L^2(G)}.
\end{align}
From \eqref{05.6}, we deduce that there exist two processes
$$(\widehat{u},\widehat{v})\in L^2_\mathcal{F}(0,T;L^2(G_0))\times L^2_\mathcal{F}(0,T;L^2(G)),$$ 
and sub-sequences $(u_\varepsilon)$ and $(v_\varepsilon)$ of respectively $(u_\varepsilon)$ and $(v_\varepsilon)$ such that as $\varepsilon\rightarrow0$, we have
\begin{align}\label{05.7}
\begin{aligned}
&\,u_\varepsilon \longrightarrow \widehat{u},\,\,\,\textnormal{weakly in}\,\,\,L^2((0,T)\times\Omega;L^2(G));
\\
&v_\varepsilon \longrightarrow \widehat{v},\,\,\,\textnormal{weakly in}\,\,\,L^2((0,T)\times\Omega;L^2(G)).
\end{aligned}
\end{align}
Finally, if $\widehat{y}$ denotes the solution of \eqref{1.1} associated to the initial state $y_0$ and controls $\widehat{u}$ and $\widehat{v}$, then according to \eqref{05.7} and \eqref{05.6}, we conclude that
$$\vert \widehat{u}\vert^2_{L^2_\mathcal{F}(0,T;L^2(G))}+\vert \widehat{v}\vert^2_{ L^2_\mathcal{F}(0,T;L^2(G))}\leq e^{CK}\,\vert y_0\vert^2_{L^2(G)},$$
and $\widehat{y}(T,\cdot)=0$ in $G$, $\mathbb{P}\textnormal{-a.s.}$ This completes the proof of Theorem \ref{thm01.1}.
\end{proof}

\appendix
\section{proof of Proposition \ref{prop2.4}}\label{appsec1}

We first recall the following known Carleman estimate for equation \eqref{202.3} (with $F\equiv0$ and non-homogeneous Neumann boundary conditions). For the proof, we refer to \cite[Theorem 1]{yan2018carleman}.
\begin{lm}\label{lm2.2}
There exists constants $C>0$ and $\mu_0\geq1$ depending only on $G$, $G_0$ and $\mathcal{A}$ so that for any $z_T\in L^2_{\mathcal{F}_T}(\Omega;L^2(G))$, $F_1\in L^2_\mathcal{F}(0,T;L^2(G))$, $F_2\in L^2_\mathcal{F}(0,T;L^2(\Gamma))$, the weak solution $(z,Z)$ of \eqref{202.3} $($with $F\equiv0$$)$ satisfies that
\begin{align}\label{3.6carles}
\begin{aligned}
&\,\lambda^3\mu^4\mathbb{E}\int_Q \theta^2\varphi^3z^2\,dxdt+\lambda\mu^2\mathbb{E}\int_Q \theta^2\varphi\vert\nabla z\vert^2\,dxdt+\lambda^2\mu^3\mathbb{E}\int_\Sigma \theta^2\varphi^2z^2\,d\sigma dt\\
&\leq  C\Bigg[\lambda^3\mu^4\mathbb{E}\int_{Q_0} \theta^2\varphi^3z^2\,dxdt+\mathbb{E}\int_Q \theta^2 F_1^2\,dxdt+\lambda\mu\mathbb{E}\int_\Sigma \theta^2\varphi F_2^2\,d\sigma dt\\
&\quad\quad\;\,+\lambda^2\mu^2\mathbb{E}\int_Q \theta^2\varphi^2Z^2\,dxdt\Bigg],
\end{aligned}
\end{align}
for all $\mu\geq\mu_0$ and $\lambda\geq C(e^{2\mu\vert\psi\vert_\infty}T+T^2)$.
\end{lm}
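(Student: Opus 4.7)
The plan is to carry out a penalization / HUM argument, choosing the weights in the cost functional to match those on the LHS of \eqref{2.2}, with Lemma \ref{lm2.2} providing the key absorption for the adjoint. Fix $\mu=\mu_0$. For each $\varepsilon>0$, minimize
\begin{equation*}
J_\varepsilon(u,v)=\tfrac{1}{2}\mathbb{E}\int_{Q_0}\lambda^{-3}\theta^{-2}\varphi^{-3}u^2\,dxdt+\tfrac{1}{2}\mathbb{E}\int_Q\lambda^{-2}\theta^{-2}\varphi^{-2}v^2\,dxdt+\tfrac{1}{2\varepsilon}\mathbb{E}\int_G|y(T)|^2\,dx
\end{equation*}
over $L^2_\mathcal{F}(0,T;L^2(G_0))\times L^2_\mathcal{F}(0,T;L^2(G))$, where $y$ solves \eqref{2.1} with controls $(u,v)$. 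Strict convexity, continuity, and coercivity yield a unique minimizer $(u_\varepsilon,v_\varepsilon)$. Introducing the adjoint $(z_\varepsilon,Z_\varepsilon)$ as the weak solution of the homogeneous backward equation
\begin{equation*}
dz_\varepsilon+\nabla\cdot(\mathcal{A}\nabla z_\varepsilon)\,dt=Z_\varepsilon\,dW(t)\text{ in }Q,\quad\partial^\mathcal{A}_\nu z_\varepsilon=0\text{ on }\Sigma,\quad z_\varepsilon(T)=\varepsilon^{-1}y_\varepsilon(T),
\end{equation*}
the Euler--Lagrange equation gives the pointwise optimality $u_\varepsilon=-\mathbbm{1}_{G_0}\lambda^3\theta^2\varphi^3 z_\varepsilon$ and $v_\varepsilon=-\lambda^2\theta^2\varphi^2 Z_\varepsilon$.

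Next, applying Itô's formula to $(y_\varepsilon,z_\varepsilon)_{L^2(G)}$, integrating on $[0,T]$, taking expectation, and using the Neumann datum $\partial^\mathcal{A}_\nu y_\varepsilon=\lambda^2\theta^2\varphi^2 z$ together with the optimality relations, I would obtain the identity
\begin{equation*}
\tfrac{1}{\varepsilon}\mathbb{E}|y_\varepsilon(T)|^2_{L^2(G)}+\lambda^3\mathbb{E}\int_{Q_0}\theta^2\varphi^3 z_\varepsilon^2\,dxdt+\lambda^2\mathbb{E}\int_Q\theta^2\varphi^2 Z_\varepsilon^2\,dxdt=\lambda^3\mathbb{E}\int_Q\theta^2\varphi^3 zz_\varepsilon\,dxdt+\lambda^2\mathbb{E}\int_\Sigma\theta^2\varphi^2 zz_\varepsilon\,d\sigma dt.
\end{equation*}
Cauchy--Schwarz and Young's inequality with a small parameter $\delta$, combined with Lemma \ref{lm2.2} applied to $(z_\varepsilon,Z_\varepsilon)$ (with $F_1=F_2=0$ and $F\equiv0$) at $\mu=\mu_0$ and $\lambda\geq C(T+T^2)$, allow me to bound the $\lambda^3\mathbb{E}\int_Q\theta^2\varphi^3 z_\varepsilon^2$ and $\lambda^2\mathbb{E}\int_\Sigma\theta^2\varphi^2 z_\varepsilon^2$ factors arising on the RHS by $\lambda^3\mathbb{E}\int_{Q_0}\theta^2\varphi^3 z_\varepsilon^2+\lambda^2\mathbb{E}\int_Q\theta^2\varphi^2 Z_\varepsilon^2$, which are already on the LHS of the identity; choosing $\delta$ small enough then absorbs them. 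Translating through the optimality relations, this produces the bound $\varepsilon^{-1}\mathbb{E}|y_\varepsilon(T)|^2+\lambda^{-3}\mathbb{E}\int_{Q_0}\theta^{-2}\varphi^{-3}u_\varepsilon^2+\lambda^{-2}\mathbb{E}\int_Q\theta^{-2}\varphi^{-2}v_\varepsilon^2\leq C\bigl[\lambda^3\mathbb{E}\int_Q\theta^2\varphi^3 z^2+\lambda^2\mathbb{E}\int_\Sigma\theta^2\varphi^2 z^2\bigr]$, yielding the two control-norm terms of \eqref{2.2}.

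For the three $\widehat{y}$-terms in \eqref{2.2}, I would apply Itô's formula to $\theta^{-2}|y_\varepsilon|^2$: using \eqref{2.301}, the weight time-derivative obeys $|(\theta^{-2})_t|\leq C\lambda T\varphi^2\theta^{-2}$ and, for $\lambda\geq C(T+T^2)$, is dominated after a Young split by the coercive dissipation $2c_0\theta^{-2}|\nabla y_\varepsilon|^2$; this dissipation in turn controls the weaker weighted norm $\lambda^{-2}\theta^{-2}\varphi^{-2}|\nabla y_\varepsilon|^2$ since $\lambda^{-2}\varphi^{-2}\leq C$ in the same range. The boundary contribution arising from $\partial^\mathcal{A}_\nu y_\varepsilon=\lambda^2\theta^2\varphi^2 z$ is treated via the trace inequality \eqref{estongamma} and absorbed against the already-controlled $\lambda^2\mathbb{E}\int_\Sigma\theta^2\varphi^2 z^2$, while the drift source $\lambda^3\theta^2\varphi^3 z$ and the controls $(u_\varepsilon,v_\varepsilon)$ are dominated using the RHS of \eqref{2.2} together with the control bounds established in Step~2. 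Weak compactness then extracts a subsequence with $(u_\varepsilon,v_\varepsilon)\rightharpoonup(\widehat{u},\widehat{v})$; passing to the limit in \eqref{2.1} and invoking $\mathbb{E}|y_\varepsilon(T)|^2\leq C\varepsilon$ forces $\widehat{y}(T)=0$ $\mathbb{P}$-a.s., while weak lower semicontinuity preserves \eqref{2.2}. The main obstacle will be the joint $\delta$-bookkeeping across both the interior and boundary $z_\varepsilon$-contributions in the identity—so that Lemma \ref{lm2.2} is usable with a single range of $\lambda$—together with verifying in the weighted energy step that the ill-signed weight time-derivative is absorbed and that the mismatch between the natural weight $\theta^{-2}$ from the Itô computation and the target weights $\lambda^{-2}\theta^{-2}\varphi^{-2}$, $\lambda^{-1}\theta^{-2}\varphi^{-1}$ is resolved via the simple bounds $\lambda^{-p}\varphi^{-p}\leq C$ valid for $\lambda\geq C(T+T^2)$.
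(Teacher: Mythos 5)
Your proposal does not prove the statement it is attached to. The statement is Lemma \ref{lm2.2} itself, i.e.\ the Carleman estimate \eqref{3.6carles} for the backward stochastic parabolic equation \eqref{202.3} with $F\equiv0$ and non-homogeneous Neumann data. In the paper this estimate is not derived at all: it is quoted from Theorem~1 of \cite{yan2018carleman}, and any self-contained proof would have to go through a pointwise weighted identity for the operator $dz+\nabla\cdot(\mathcal{A}\nabla z)\,dt$ (the choice of $\mu_0$, the powers of $\mu$ in \eqref{3.6carles}, and the threshold $\lambda\geq C(e^{2\mu|\psi|_\infty}T+T^2)$ all come from that computation). Your argument instead takes Lemma \ref{lm2.2} as an input --- you explicitly apply it to the adjoint pair $(z_\varepsilon,Z_\varepsilon)$ to absorb the interior and boundary terms --- and uses it inside a penalized HUM scheme to establish the null controllability of the auxiliary system \eqref{2.1} together with the weighted estimate \eqref{2.2}. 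That is the content of Proposition \ref{prop2.4}, proved in Appendix \ref{appsec1}, not of Lemma \ref{lm2.2}. As a proof of Lemma \ref{lm2.2} the argument is therefore circular: it assumes the very inequality it is supposed to establish.

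Even read as a proof of Proposition \ref{prop2.4}, there is a substantive deviation: your cost functional omits the state-observation terms $\tfrac12\mathbb{E}\int_Q\theta_\varepsilon^{-2}y^2\,dxdt$ and $\tfrac12\lambda^{-1}\mathbb{E}\int_\Sigma\theta_\varepsilon^{-2}\varphi^{-1}y^2\,d\sigma dt$ that the paper includes. Those terms are precisely what place the sources $-\theta_\varepsilon^{-2}y_\varepsilon$ and $\lambda^{-1}\theta_\varepsilon^{-2}\varphi^{-1}y_\varepsilon$ into the adjoint equation \eqref{2.04}, so that the interior and boundary $y_\varepsilon$-integrals appear directly on the left-hand side of the duality identity and are then handled by the $F_1$- and $F_2$-terms of \eqref{3.6carles}. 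With your homogeneous adjoint, the identity yields only the control norms and $\varepsilon^{-1}\mathbb{E}|y_\varepsilon(T)|^2$, and you must recover $\mathbb{E}\int_Q\theta^{-2}\widehat y^2\,dxdt$ and the boundary term by a separate energy argument; the coercive dissipation $2c_0\theta^{-2}|\nabla y_\varepsilon|^2$ does not by itself control the zero-order weighted norm of $y_\varepsilon$ (there is no Poincar\'e inequality available with these weights and Neumann data), so that step as sketched does not close.
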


Let us now give the Proof of Proposition \ref{prop2.4}. 
\begin{proof}[Proof of Proposition \ref{prop2.4}]
Here, the proof is based on the classical duality method known as the penalized Hilbert Uniqueness Method, which was developed in \cite{GlowiLions}. Let $\mu=\mu_0$ given in Lemma \ref{lm2.2} and $\varepsilon>0$. We define \begin{align}\label{alptheeps}
\alpha_\varepsilon\equiv\alpha_\varepsilon(t,x)=\frac{e^{\mu_0\psi(x)}-e^{2\mu_0\vert\psi\vert_\infty}}{(t+\varepsilon)(T-t+\varepsilon)},\quad\quad\theta_\varepsilon=e^{\lambda\alpha_\varepsilon}.
\end{align}
Let us consider the following minimization problem
\begin{equation}\label{2.03}
    \inf\big\{ J_\varepsilon(u,v)\,\vert\, (u,v)\in\mathcal{H}\big\},
\end{equation}
where 
\begin{align*}
 J_\varepsilon(u,v)=&\,\frac{1}{2}\mathbb{E}\int_{Q_0} \lambda^{-3}\theta^{-2}\varphi^{-3}u^2\,dxdt+\frac{1}{2}\mathbb{E}\int_Q \lambda^{-2}\theta^{-2}\varphi^{-2}v^2\,dxdt\\
 &+\frac{1}{2}\mathbb{E}\int_Q \theta^{-2}_\varepsilon y^2\,dxdt+\frac{1}{2}\mathbb{E}\int_\Sigma \lambda^{-1}\theta_\varepsilon^{-2}\varphi^{-1}y^2\,d\sigma dt\\
 &+\frac{1}{2\varepsilon}\mathbb{E}\int_G |y(T)|^2\,dx,
 \end{align*}
 and
\begin{align*}
\mathcal{H}=\Bigg\{(u&,v)\in L^2_\mathcal{F}(0,T;L^2(G_0))\times L^2_\mathcal{F}(0,T;L^2(G)):\\
&\mathbb{E}\int_{Q_0} \theta^{-2}\varphi^{-3}u^2\,dxdt<\infty\,,\quad\mathbb{E}\int_Q \theta^{-2}\varphi^{-2}v^2\,dxdt<\infty\Bigg\},
\end{align*}
with $y$ is the solution of \eqref{2.1} with controls $u$ and $v$.

It is easy to see that $J_\varepsilon$ is well-defined, continuous, strictly convex, and coercive. Therefore, \eqref{2.03} admits a unique optimal solution $(u_\varepsilon,v_\varepsilon)\in\mathcal{H}$. Moreover, by the standard duality argument (see, for instance, \cite{imanuvilov2003carleman,lions1972some}), this optimal solution can be characterized as follows:
\begin{equation}\label{2.003}
 u_\varepsilon=-\mathbbm{1}_{G_0}\lambda^3\theta^2\varphi^3r_\varepsilon\qquad\textnormal{and}\qquad v_\varepsilon=-\lambda^2\theta^2\varphi^2R_\varepsilon,
 \end{equation}
where $(r_\varepsilon,R_\varepsilon)$ is the solution of the following backward stochastic parabolic equation
\begin{equation}\label{2.04}
\begin{cases}
\begin{array}{ll}
dr_\varepsilon + \nabla\cdot(\mathcal{A}\nabla r_\varepsilon) \,dt = -\theta^{-2}_\varepsilon y_\varepsilon \,dt + R_\varepsilon \,dW(t) &\textnormal{in}\,\,Q,\\
\partial^\mathcal{A}_\nu r_\varepsilon=\lambda^{-1}\theta_\varepsilon^{-2}\varphi^{-1}y_\varepsilon&\textnormal{on}\,\,\Sigma,\\
r_\varepsilon(T)=\frac{1}{\varepsilon}y_\varepsilon(T) &\textnormal{in}\,\,G,
			\end{array}
		\end{cases}
\end{equation}
and $y_\varepsilon$ is the solution of \eqref{2.1} with controls $u_\varepsilon$ and $v_\varepsilon$. Computing $d(y_\varepsilon r_\varepsilon)$ by Itô's formula and using \eqref{2.003}, we get that
\begin{align}
\begin{aligned}
&\,\frac{1}{\varepsilon}\mathbb{E}\int_G \vert y_\varepsilon(T)\vert^2\,dx+\mathbb{E}\int_{Q}\theta_\varepsilon^{-2}y_\varepsilon^2\,dxdt+\lambda^{-1}\mathbb{E}\int_\Sigma\theta_\varepsilon^{-2}\varphi^{-1}y_\varepsilon^2\,d\sigma dt\\
&+\lambda^3\mathbb{E}\int_{Q_0}\theta^2\varphi^3r_\varepsilon^2\,dxdt+\lambda^2\mathbb{E}\int_{Q}\theta^2\varphi^2R_\varepsilon^2\,dxdt\\
\label{2.005}&
= \lambda^3\mathbb{E}\int_{Q}\theta^2\varphi^3 zr_\varepsilon \,dxdt+\lambda^2\mathbb{E}\int_{\Sigma}\theta^2\varphi^2z r_\varepsilon \,d\sigma dt.
\end{aligned}
\end{align}
Applying Young's inequality in the right-hand side of \eqref{2.005}, we have that for any $\rho>0$
\begin{align}
\begin{aligned}
&\,\frac{1}{\varepsilon}\mathbb{E}\int_G \vert y_\varepsilon(T)\vert^2\,dx+\mathbb{E}\int_{Q}\theta_\varepsilon^{-2}y_\varepsilon^2\,dxdt+\lambda^{-1}\mathbb{E}\int_\Sigma\theta_\varepsilon^{-2}\varphi^{-1}y_\varepsilon^2\,d\sigma dt\\
&+\lambda^3\mathbb{E}\int_{Q_0}\theta^2\varphi^3r_\varepsilon^2\,dxdt+\lambda^2\mathbb{E}\int_{Q}\theta^2\varphi^2R_\varepsilon^2\,dxdt\\&
\leq \rho\Bigg[\lambda^3\mathbb{E}\int_{Q}\theta^2\varphi^3r_\varepsilon^2\,dxdt+\lambda^2\mathbb{E}\int_{\Sigma}\theta^2\varphi^2r_\varepsilon^2\,d\sigma dt\Bigg]\\
\label{2.00151402}&\quad+C(\rho)\Bigg[\lambda^3\mathbb{E}\int_{Q}\theta^2\varphi^3z^2\,dxdt+\lambda^2\mathbb{E}\int_{\Sigma}\theta^2\varphi^2z^2\,d\sigma dt\Bigg].
\end{aligned}
\end{align}
Using Carleman estimate \eqref{3.6carles} for equation \eqref{2.04} and the fact that $\theta^2\theta_\varepsilon^{-2}\leq1$, we can deduce that for $\lambda\geq C(T+T^2)$, the following inequality holds
\begin{align}
\begin{aligned}
&\,\frac{1}{\varepsilon}\mathbb{E}\int_G \vert y_\varepsilon(T)\vert^2\,dx+\mathbb{E}\int_{Q}\theta_\varepsilon^{-2}y_\varepsilon^2\,dxdt+\lambda^{-1}\mathbb{E}\int_\Sigma\theta_\varepsilon^{-2}\varphi^{-1}y_\varepsilon^2\,d\sigma dt\\
&+\lambda^3\mathbb{E}\int_{Q_0}\theta^2\varphi^3r_\varepsilon^2\,dxdt+\lambda^2\mathbb{E}\int_{Q}\theta^2\varphi^2R_\varepsilon^2\,dxdt\\
&\leq C\rho\Bigg[\lambda^3\mathbb{E}\int_{Q_0}\theta^2\varphi^3r_\varepsilon^2\,dxdt+\mathbb{E}\int_Q \theta_\varepsilon^{-2}y_\varepsilon^2 \,dxdt\\
&\qquad\quad+\lambda^{-1}\mathbb{E}\int_\Sigma \theta_\varepsilon^{-2}\varphi^{-1}y_\varepsilon^2 \,d\sigma dt+\lambda^2\mathbb{E}\int_{Q}\theta^2\varphi^2R_\varepsilon^2\,dxdt\Bigg]\\
\label{2.0015}&
\quad+C(\rho)\Bigg[\lambda^3\mathbb{E}\int_{Q}\theta^2\varphi^3z^2\,dxdt+\lambda^2\mathbb{E}\int_{\Sigma}\theta^2\varphi^2z^2\,d\sigma dt\Bigg].
\end{aligned}
\end{align}
Taking a small enough $\rho>0$ in \eqref{2.0015} and recalling \eqref{2.003}, we conclude that
\begin{align}
\begin{aligned}
&\,\lambda^{-3}\mathbb{E}\int_{Q}\theta^{-2}\varphi^{-3}u_\varepsilon^2\,dxdt+\lambda^{-2}\mathbb{E}\int_{Q}\theta^{-2}\varphi^{-2}v_\varepsilon^2\,dxdt\\
&+\mathbb{E}\int_{Q} \theta^{-2}_\varepsilon y_\varepsilon^2\,dxdt+\lambda^{-1}\mathbb{E}\int_\Sigma\theta_\varepsilon^{-2}\varphi^{-1}y_\varepsilon^2\,d\sigma dt+\frac{1}{\varepsilon}\mathbb{E}\int_{G}\vert y_\varepsilon(T)\vert^2\,dx\\
\label{2.07}&
\leq C\,\Bigg[\lambda^3\mathbb{E}\int_{Q}\theta^2\varphi^3z^2\,dxdt+\lambda^2\mathbb{E}\int_{\Sigma}\theta^2\varphi^2z^2\,d\sigma dt\Bigg].
\end{aligned}
\end{align}

On the other hand, computing $d(\theta^{-2}_\varepsilon\varphi^{-2}y_\varepsilon^2)$, integrating the obtained equality on $Q$ and taking the expectation on both sides, we get that
\begin{align}\label{2.08}
\begin{aligned}
&\,2 \mathbb{E}\int_Q \theta^{-2}_\varepsilon\varphi^{-2}\mathcal{A}\nabla y_\varepsilon\cdot\nabla y_\varepsilon \,dxdt\\
&=\mathbb{E}\int_Q (\theta^{-2}_\varepsilon\varphi^{-2})_ty_\varepsilon^2\,dxdt-2 \mathbb{E}\int_Q y_\varepsilon \mathcal{A}\nabla y_\varepsilon\cdot\nabla(\theta_\varepsilon^{-2}\varphi^{-2}) \,dxdt\\
&\quad\,+2\lambda^2\mathbb{E}\int_\Sigma \theta_\varepsilon^{-2}\theta^2 y_\varepsilon z \,d\sigma dt+2\mathbb{E}\int_{Q_0}\theta^{-2}_\varepsilon\varphi^{-2}y_\varepsilon u_\varepsilon \,dxdt\\
&\quad\,+2\lambda^3\mathbb{E}\int_Q \theta^{-2}_\varepsilon\theta^2\varphi y_\varepsilon z \,dxdt+\mathbb{E}\int_Q \theta^{-2}_\varepsilon\varphi^{-2}v_\varepsilon^2\,dxdt.
\end{aligned}
\end{align}
It is easy to see that for $\lambda\geq CT^2$
\begin{equation*}
\vert(\theta^{-2}_\varepsilon\varphi^{-2})_t\vert\leq CT\lambda\theta_\varepsilon^{-2},\qquad\quad\vert\nabla(\theta^{-2}_\varepsilon\varphi^{-2})\vert\leq C\lambda\theta^{-2}_\varepsilon\varphi^{-1}.
\end{equation*}
This implies for a large enough $\lambda\geq C(T+T^2)$, we have that 
\begin{align}
\begin{aligned}
&\,\mathbb{E}\int_Q \theta_\varepsilon^{-2}\varphi^{-2}\vert\nabla y_\varepsilon\vert^2\,dxdt\\
&\leq CT\lambda \mathbb{E}\int_Q \theta_\varepsilon^{-2}y_\varepsilon^2\,dxdt+C\mathbb{E}\int_Q \theta^{-2}\varphi^{-2}v_\varepsilon^2\,dxdt\\
&\quad+C\lambda^2\mathbb{E}\int_\Sigma  \theta_\varepsilon^{-1}\theta \vert  z\vert\vert y_\varepsilon\vert \,d\sigma dt+C\lambda\mathbb{E}\int_Q \theta_\varepsilon^{-2}\varphi^{-1}\vert y_\varepsilon\vert\vert\nabla y_\varepsilon\vert \,dxdt\\
\label{2.09}&\quad+C\lambda^3\mathbb{E}\int_Q \theta_\varepsilon^{-1}\theta\varphi\vert z\vert\vert y_\varepsilon\vert \,dxdt+C\mathbb{E}\int_{Q}\theta_\varepsilon^{-1}\theta^{-1}\varphi^{-2}\vert y_\varepsilon\vert\vert u_\varepsilon\vert \,dxdt.
\end{aligned}
\end{align}
Multiplying \eqref{2.09} by $\lambda^{-2}$ and then applying Young's inequality, we obtain that for any $\rho>0$
\begin{align}\label{2.09extra}
\begin{aligned}
&\,\lambda^{-2}\mathbb{E}\int_Q \theta_\varepsilon^{-2}\varphi^{-2}\vert\nabla y_\varepsilon\vert^2\,dxdt\\
&\leq CT\lambda^{-1}\mathbb{E}\int_Q \theta_\varepsilon^{-2}y_\varepsilon^2\,dxdt+C\lambda^{-2}\mathbb{E}\int_Q \theta^{-2}\varphi^{-2}v_\varepsilon^2\,dxdt\\
&\quad+C\lambda^2\mathbb{E}\int_\Sigma  \theta^2\varphi^2  z^2 \,d\sigma dt+C\lambda^{-2}\mathbb{E}\int_\Sigma  \theta_\varepsilon^{-2}\varphi^{-2}  y_\varepsilon^2 \,d\sigma dt\\
&\quad
+\rho\lambda^{-2}\mathbb{E}\int_Q\theta_\varepsilon^{-2}\varphi^{-2}\vert\nabla y_\varepsilon\vert^2 \,dxdt
+C(\rho)\mathbb{E}\int_Q \theta_\varepsilon^{-2} y_\varepsilon^2 \,dxdt\\
&\quad+C\lambda^3\mathbb{E}\int_Q \theta^2\varphi^3 z^2 \,dxdt+C\lambda^{-1}\mathbb{E}\int_Q \theta_\varepsilon^{-2} \varphi^{-1} y_\varepsilon^2 \,dxdt\\
&\quad
+C\lambda^{-4}\mathbb{E}\int_{Q}\theta^{-2}\varphi^{-4} u_\varepsilon^2 \,dxdt+C\mathbb{E}\int_{Q}\theta_\varepsilon^{-2} y_\varepsilon^2 \,dxdt.
\end{aligned}
\end{align}
Hence for a large $\lambda\geq C(T+T^2)$, we get that
\begin{align}\label{2.010}
\begin{aligned}
&\,\lambda^{-2}\mathbb{E}\int_Q \theta_\varepsilon^{-2}\varphi^{-2}\vert\nabla y_\varepsilon\vert^2\,dxdt\\
&\leq C\Bigg[\mathbb{E}\int_Q \theta_\varepsilon^{-2} y_\varepsilon^2 \,dxdt+\lambda^{-1}\mathbb{E}\int_\Sigma \theta_\varepsilon^{-2}\varphi^{-1} y_\varepsilon^2 \,d\sigma dt\\
&\quad\quad\;\;+\lambda^3\mathbb{E}\int_Q \theta^2\varphi^3z^2\,dxdt
+\lambda^2\mathbb{E}\int_\Sigma \theta^2\varphi^2z^2\,d\sigma dt\\
&\quad\quad\;\;+\lambda^{-3}\mathbb{E}\int_{Q} \theta^{-2}\varphi^{-3}u_\varepsilon^2\,dxdt+\lambda^{-2}\mathbb{E}\int_Q \theta^{-2}\varphi^{-2}v_\varepsilon^2\,dxdt\Bigg].
\end{aligned}
\end{align}
Combining \eqref{2.07} and \eqref{2.010}, we deduce that
\begin{align}
\begin{aligned}
&\,\lambda^{-3}\mathbb{E}\int_{Q}\theta^{-2}\varphi^{-3}u_\varepsilon^2\,dxdt+\lambda^{-2}\mathbb{E}\int_{Q}\theta^{-2}\varphi^{-2}v_\varepsilon^2\,dxdt\\
&+\mathbb{E}\int_{Q} \theta^{-2}_\varepsilon y_\varepsilon^2\,dxdt+\lambda^{-1}\mathbb{E}\int_\Sigma \theta_\varepsilon^{-2}\varphi^{-1} y_\varepsilon^2 \,d\sigma dt\\
&+\lambda^{-2}\mathbb{E}\int_Q \theta_\varepsilon^{-2}\varphi^{-2}\vert\nabla y_\varepsilon\vert^2\,dxdt+\frac{1}{\varepsilon}\mathbb{E}\int_{G}\vert y_\varepsilon(T)\vert^2\,dx\\
\label{2.00119}&\leq C\,\Bigg[\lambda^3\mathbb{E}\int_{Q}\theta^2\varphi^3z^2\,dxdt+\lambda^2\mathbb{E}\int_\Sigma \theta^2\varphi^2z^2\,d\sigma dt\Bigg],
\end{aligned}
\end{align}
for $\lambda\geq C(T+T^2)$. Then it follows that there exists  
$$(\widehat{u},\widehat{v},\widehat{y})\in L^2_\mathcal{F}(0,T;L^2(G_0))\times L^2_\mathcal{F}(0,T;L^2(G))\times L^2_\mathcal{F}(0,T;H^1(G)),$$
such that as $\varepsilon\rightarrow0$,
\begin{align}\label{2.020}
\begin{aligned}
&u_\varepsilon \longrightarrow \widehat{u},\,\,\,\textnormal{weakly in}\,\,\,L^2((0,T)\times\Omega;L^2(G));\\
&v_\varepsilon \longrightarrow \widehat{v},\,\,\,\textnormal{weakly in}\,\,\,L^2((0,T)\times\Omega;L^2(G));\\ 
&y_{\varepsilon} \longrightarrow \widehat{y},\,\,\,\textnormal{weakly in}\,\,\,L^2((0,T)\times\Omega;H^1(G)).
\end{aligned}
\end{align}	
Now, let us show that $\widehat{y}$ is the solution of \eqref{2.1} associated with controls $\widehat{u}$ and $\widehat{v}$. To do this, denote by $\widetilde{y}$ the unique solution of \eqref{2.1} with controls $\widehat{u}$ and $\widehat{v}$. For any process $f\in L^2_\mathcal{F}(0,T;L^2(G))$, let $(\phi,\Phi)$ be the solution of the following backward stochastic parabolic equation
\begin{equation*}
\begin{cases}
\begin{array}{lcl}
d\phi+\nabla\cdot(\mathcal{A}\nabla\phi) \,dt = f \,dt+\Phi \,dW(t) &\textnormal{in}& Q,\\
\partial^\mathcal{A}_\nu \phi=0&\textnormal{on}&\Sigma,\\
					\phi(T)=0&\textnormal{in}&G.
				\end{array}
			\end{cases}
		\end{equation*}
By Itô's formula, computing ``$d(\phi y_\varepsilon)-d(\phi\widetilde{y})$", integrating the equality on $Q$, taking the expectation on both sides and letting $\varepsilon\rightarrow0$, we obtain that
\begin{equation*}
\mathbb{E}\int_Q (\widehat{y}-\widetilde{y})f \,dxdt= 0.
\end{equation*}
It follows that $\widehat{y}=\widetilde{y}$ in $Q$, $\mathbb{P}\textnormal{-a.s.}$ Then we deduce that $\widehat{y}$ is the solution of \eqref{2.1} with controls $\widehat{u}$ and $\widehat{v}$. Finally, combining the uniform estimate \eqref{2.00119} and the weak convergence property \eqref{2.020}, we conclude the controllability result of \eqref{2.1} and the desired estimate \eqref{2.2}.
\end{proof}
\section{proof of Proposition \ref{prop3.1}}\label{appendsecB}

We first note that if $F_2\equiv0$, equation \eqref{3.010} becomes a random parabolic equation. Then according to the known Carleman estimate for deterministic parabolic equations in \cite[Theorem 1]{fernandez2006null} and \eqref{2.2012}, we deduce the following Carleman estimate for solutions of \eqref{3.010} with $F\equiv F_2\equiv0$.
\begin{lm}\label{lm3.1}
There exist $\mu_0\geq1$ and a positive constant $C$ depending only on $G$, $G_0$ and $\mathcal{A}$ such that , $F_1\in L^2_\mathcal{F}(0,T;L^2(G))$, $F_3\in L^2_\mathcal{F}(0,T;L^2(\Gamma))$, and $z_0\in L^2(G)$, the corresponding solution of \eqref{3.010} $($with $F\equiv F_2\equiv0$$)$ satisfies that
\begin{align}
\begin{aligned}\label{3.20102}
&\,\lambda^3\mu^4\mathbb{E}\int_Q \theta^2\varphi^3z^2\,dxdt+\lambda\mu^2\mathbb{E}\int_Q \theta^2\varphi \vert\nabla z\vert^2\,dxdt+\lambda^2\mu^3\mathbb{E}\int_\Sigma \theta^2\varphi^2z^2\,d\sigma dt\\
&\leq\, C\Bigg[\lambda^3\mu^4\mathbb{E}\int_{Q_0} \theta^2\varphi^3z^2\,dxdt+\mathbb{E}\int_Q\theta^2F_1^2\,dxdt+\lambda\mu\mathbb{E}\int_\Sigma\theta^2\varphi F_3^2\,d\sigma dt\Bigg],
\end{aligned}
\end{align}
for all $\mu\geq\mu_0$ and $\lambda\geq C(e^{C\mu}T+T^2)$.
\end{lm}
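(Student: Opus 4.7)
The key observation is that when $F_2\equiv 0$, equation \eqref{3.010} has no stochastic integral term, so it reduces pathwise to a random (deterministic in $(t,x)$) parabolic equation. That is, for $\mathbb{P}$-a.e.\ $\omega\in\Omega$, the weak solution $z(\omega,\cdot,\cdot)$ satisfies
\begin{equation*}
\begin{cases}
\partial_t z - \nabla\cdot(\mathcal{A}(\omega,\cdot,\cdot)\nabla z) = F_1(\omega,\cdot,\cdot) &\textnormal{in}\,\,Q,\\
\partial^\mathcal{A}_\nu z = F_3(\omega,\cdot,\cdot) &\textnormal{on}\,\,\Sigma,\\
z(0) = z_0(\omega,\cdot) &\textnormal{in}\,\,G,
\end{cases}
\end{equation*}
in the usual weak sense. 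The plan is therefore to apply the classical deterministic Carleman estimate for parabolic equations with non-homogeneous Neumann boundary conditions from \cite[Theorem 1]{fernandez2006null} pathwise, and then take expectation.

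For $\mathbb{P}$-a.e.\ $\omega$, the matrix $\mathcal{A}(\omega,\cdot,\cdot)$ lies in $C^1([0,T];W^{1,\infty}(G;\mathbb{R}^{N\times N}))$, is symmetric, and uniformly elliptic with constant $c_0>0$ independent of $\omega$ by the standing assumptions. The weights $\alpha,\varphi,\theta$ in \eqref{2.2012} are exactly the Fursikov--Imanuvilov weights used in \cite{fernandez2006null}, and $\psi$ satisfies the required conditions \eqref{psicond}. Invoking \cite[Theorem 1]{fernandez2006null} then yields, for $\mathbb{P}$-a.e.\ $\omega$, the pointwise-in-$\omega$ inequality
\begin{align*}
&\lambda^3\mu^4\int_Q \theta^2\varphi^3 z^2\,dxdt+\lambda\mu^2\int_Q \theta^2\varphi |\nabla z|^2\,dxdt+\lambda^2\mu^3\int_\Sigma \theta^2\varphi^2 z^2\,d\sigma dt\\
&\leq C\Bigg[\lambda^3\mu^4\int_{Q_0}\theta^2\varphi^3 z^2\,dxdt+\int_Q \theta^2 F_1^2\,dxdt+\lambda\mu\int_\Sigma \theta^2\varphi F_3^2\,d\sigma dt\Bigg],
\end{align*}
valid for all $\mu\geq \mu_0$ and $\lambda\geq C(e^{C\mu}T+T^2)$. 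Both integrands being nonnegative, Fubini's theorem lets me take expectation on both sides, which produces \eqref{3.20102}.

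The only point requiring care is uniformity of the constants $C$ and $\mu_0$ in $\omega$. This is where the assumption $\mathcal{A}\in L^\infty_\mathcal{F}(\Omega;C^1([0,T];W^{1,\infty}(G;\mathbb{R}^{N\times N})))$ is used: inspecting the proof of \cite[Theorem 1]{fernandez2006null}, the constant $C$ and the threshold $\mu_0$ depend on $\mathcal{A}$ only through $c_0$ and $|\mathcal{A}|_{C^1([0,T];W^{1,\infty}(G))}$, both of which admit $\omega$-independent upper bounds under our hypotheses. Hence the same $C,\mu_0$ work for $\mathbb{P}$-a.e.\ $\omega$. No additional stochastic machinery (It\^o's formula, martingale terms, duality) is needed, since the Brownian term is absent by assumption; this is precisely what makes the case $F_2\equiv 0$ accessible by a direct pathwise argument, in contrast with the full estimate \eqref{3.202002} for \eqref{3.010} where the duality trick of Section \ref{sec3} is required.
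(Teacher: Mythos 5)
Your proposal is correct and follows essentially the same route as the paper: the authors likewise observe that with $F\equiv F_2\equiv0$ equation \eqref{3.010} is a random (pathwise deterministic) parabolic equation and deduce \eqref{3.20102} by applying \cite[Theorem 1]{fernandez2006null} $\omega$-by-$\omega$ and taking expectation. Your added remarks on the $\omega$-uniformity of $C$ and $\mu_0$ via the $L^\infty_\mathcal{F}$ bound on $\mathcal{A}$ simply make explicit what the paper leaves implicit.
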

Now, let us proceed with the proof of Proposition \ref{prop3.1}.
\begin{proof}[Proof of Proposition \ref{prop3.1}]
Here, the proof will follow a similar approach to the previous one of Proposition \ref{prop2.4}. Therefore, we will omit some details. Fix $\mu=\mu_0$ as provided in Lemma \ref{lm3.1} and any $\varepsilon>0$, we utilize the same weight functions $\alpha_\varepsilon$ and $\theta_\varepsilon$ defined in \eqref{alptheeps} with the new given $\mu_0$. Now, let us introduce the following optimal control problem
\begin{equation}\label{ch23.96}
\inf\{J_\varepsilon(u)\,\,\vert\,\,u\in\mathcal{U}\},
\end{equation}
where
\begin{align*}\begin{aligned}
			 J_\varepsilon(u)=&\;\frac{1}{2}\mathbb{E}\int_{Q_0} \lambda^{-3}\theta^{-2}\varphi^{-3}u^2 \,dx dt+\frac{1}{2}\mathbb{E}\int_Q \theta_\varepsilon^{-2} y^2 \,dx dt\\
			&+\frac{1}{2}\mathbb{E}\int_\Sigma \lambda^{-1}\theta_\varepsilon^{-2}\varphi^{-1}y^2\,d\sigma dt+\frac{1}{2\varepsilon}\mathbb{E}\int_G |y(0)|^2 \,dx,
		\end{aligned}\end{align*}
and
		$$\mathcal{U}=\bigg\{u\in L^2_\mathcal{F}(0,T;L^2(G_0))\,,\,\,\quad\mathbb{E}\int_{Q_0} \theta^{-2}\varphi^{-3}u^2 \,dx dt<\infty\bigg\}.$$

It is easy to see that \eqref{ch23.96} admits a unique optimal solution $u_\varepsilon$. This optimal solution can be characterized by
\begin{equation}\label{B23.97}
u_\varepsilon = \mathbbm{1}_{G_0}\lambda^3\theta^2\varphi^3r_\varepsilon\,\;\,\textnormal{in}\,\,\,Q\,,\,\,\,\mathbb{P}\textnormal{-a.s.},
\end{equation}
with $r_\varepsilon$ is the solution of the following random parabolic equation
		\begin{equation}\label{B23.98}
			\begin{cases}
				\begin{array}{ll}
					dr_\varepsilon - \nabla\cdot(\mathcal{A}\nabla r_\varepsilon) \,dt = \theta_\varepsilon^{-2}y_\varepsilon \,dt  &\textnormal{in}\,\,Q,\\
					\partial^\mathcal{A}_\nu r_\varepsilon= \lambda^{-1}\theta^{-2}_\varepsilon\varphi^{-1}y_\varepsilon&\textnormal{on}\,\,\Sigma,\\
					r_{\varepsilon}(0)=\frac{1}{\varepsilon}y_\varepsilon(0) &\textnormal{in}\,\,G,
				\end{array}
			\end{cases}
		\end{equation}
where $(y_\varepsilon,Y_{\varepsilon})$ is the solution of \eqref{3.6} with the control $u_\varepsilon$. 

Computing $d(y_\varepsilon r_\varepsilon)$, we obtain that
\begin{align*}
\begin{aligned}
&\;\mathbb{E}\int_Q \theta_\varepsilon^{-2} y_\varepsilon^2 \,dxdt+\lambda^3\mathbb{E}\int_{Q_0}\theta^2\varphi^3 r_\varepsilon^2\,dxdt\\
&+ \lambda^{-1}\mathbb{E}\int_\Sigma \theta^{-2}_\varepsilon\varphi^{-1} y_\varepsilon^2 \,d\sigma dt+ \frac{1}{\varepsilon}\mathbb{E}\int_G |y_\varepsilon(0)|^2\,dx\\
&= -\lambda^3\mathbb{E}\int_Q \theta^2\varphi^3r_\varepsilon z \,dx dt-\lambda^2\mathbb{E}\int_\Sigma \theta^2\varphi^2r_\varepsilon z \,d\sigma dt.
\end{aligned}
\end{align*}
Applying Young's inequality and using Carleman estimate \eqref{3.20102} for equation \eqref{B23.98}, it follows that for any $\rho>0$ 
\begin{align}\label{B701}
\begin{aligned}
&\;\mathbb{E}\int_Q \theta_\varepsilon^{-2} y_\varepsilon^2 \,dxdt+\lambda^3\mathbb{E}\int_{Q_0}\theta^2\varphi^3 r_\varepsilon^2\,dxdt\\
&+ \lambda^{-1}\mathbb{E}\int_\Sigma \theta^{-2}_\varepsilon\varphi^{-1} y_\varepsilon^2 \,d\sigma dt+ \frac{1}{\varepsilon}\mathbb{E}\int_G |y_\varepsilon(0)|^2\,dx\\
&\leq C\rho\Bigg[\lambda^3\mathbb{E}\int_{Q_0} \theta^2\varphi^3r_\varepsilon^2 \,dx dt+\mathbb{E}\int_Q \theta_\varepsilon^{-2} y_\varepsilon^2 \,dxdt+\lambda^{-1}\mathbb{E}\int_\Sigma \theta^{-2}_\varepsilon\varphi^{-1} y_\varepsilon^2 \,d\sigma dt\Bigg]\\
&\quad+C(\rho)\Bigg[\lambda^3\mathbb{E}\int_Q \theta^2\varphi^3 z^2 \,dx dt+\lambda^2\mathbb{E}\int_\Sigma \theta^2\varphi^2 z^2 \,d\sigma dt\Bigg].
\end{aligned}
\end{align}
Taking a small enough $\rho>0$ in \eqref{B701} and using \eqref{B23.97}, we obtain that
\begin{align}\label{B801}
\begin{aligned}
&\;\mathbb{E}\int_Q \theta_\varepsilon^{-2} y_\varepsilon^2 \,dxdt+\lambda^{-3}\mathbb{E}\int_{Q}\theta^{-2}\varphi^{-3} u_\varepsilon^2\,dxdt\\
&+ \lambda^{-1}\mathbb{E}\int_\Sigma \theta^{-2}_\varepsilon\varphi^{-1} y_\varepsilon^2 \,d\sigma dt+ \frac{1}{\varepsilon}\mathbb{E}\int_G |y_\varepsilon(0)|^2\,dx\\
&\leq C\Bigg[\lambda^3\mathbb{E}\int_Q \theta^2\varphi^3 z^2 \,dx dt+\lambda^2\mathbb{E}\int_\Sigma \theta^2\varphi^2 z^2 \,d\sigma dt\Bigg].
\end{aligned}
\end{align}
On the other hand, we calculate $d(\theta_\varepsilon^{-2}\varphi^{-2}y_\varepsilon^2)$, then we get that
 \begin{align}\label{B019}
 \begin{aligned}
   &\;2\mathbb{E}\int_Q \theta_\varepsilon^{-2}\varphi^{-2}\mathcal{A}\nabla y_\varepsilon\cdot\nabla y_\varepsilon  \,dx dt+\mathbb{E}\int_Q \theta_\varepsilon^{-2}\varphi^{-2}Y_\varepsilon^2\,dx dt\\
   & = -\mathbb{E}\int_Q(\theta_\varepsilon^{-2}\varphi^{-2})_t y_\varepsilon^2 \,dx dt
	-2\lambda^3\mathbb{E}\int_Q \theta_\varepsilon^{-2} \theta^2\varphi y_\varepsilon z \,dx dt\\
&\;\;\;-2\mathbb{E}\int_{Q_0}\theta_\varepsilon^{-2}\varphi^{-2}y_\varepsilon u_\varepsilon \,dx dt-2\mathbb{E}\int_Q y_\varepsilon \mathcal{A}\nabla y_\varepsilon\cdot\nabla(\theta_\varepsilon^{-2}\varphi^{-2}) \,dx dt\\
&\;\;\;-2\lambda^2\mathbb{E}\int_\Sigma \theta_\varepsilon^{-2}\theta^2 z y_\varepsilon \,d\sigma dt.
\end{aligned}
\end{align}
Note that for $\lambda\geq CT^2$, we have
\begin{equation}\label{B010}
\vert(\theta^{-2}_\varepsilon\varphi^{-2})_t\vert\leq CT\lambda \theta_\varepsilon^{-2},\qquad\quad\vert\nabla(\theta^{-2}_\varepsilon\varphi^{-2})\vert\leq C\lambda\theta^{-2}_\varepsilon\varphi^{-1}.
\end{equation}
By \eqref{B010} and Young's inequality, \eqref{B019} implies that
 \begin{align}\label{B0110}
 \begin{aligned}
&\;\mathbb{E}\int_Q \theta_\varepsilon^{-2}\varphi^{-2}|\nabla y_\varepsilon|^2  \,dx dt+\mathbb{E}\int_Q \theta_\varepsilon^{-2}\varphi^{-2}Y_\varepsilon^2\,dx dt\\
& \leq CT\lambda \mathbb{E}\int_Q \theta_\varepsilon^{-2} y_\varepsilon^2 \,dx dt+C\lambda^3\mathbb{E}\int_Q \theta_\varepsilon^{-1} \theta \varphi |y_\varepsilon| |z| \,dx dt\\
&\;\;\;+C\mathbb{E}\int_{Q}\theta_\varepsilon^{-1}\theta^{-1}\varphi^{-2} |y_\varepsilon| |u_\varepsilon| \,dx dt+C\lambda\mathbb{E}\int_Q \theta_\varepsilon^{-2}\varphi^{-1} |y_\varepsilon| |\nabla y_\varepsilon| \,dx dt\\
&\;\;\;+C\lambda^2\mathbb{E}\int_\Sigma \theta_\varepsilon^{-1}\theta |z| |y_\varepsilon| \,d\sigma dt.
\end{aligned}
\end{align}
Multiplying \eqref{B0110} by $\lambda^{-2}$ and applying Young's inequality, we get that for any $\rho>0$
 \begin{align}\label{B12012}
 \begin{aligned}
&\;\lambda^{-2}\mathbb{E}\int_Q \theta_\varepsilon^{-2}\varphi^{-2}|\nabla y_\varepsilon|^2  \,dx dt+\lambda^{-2}\mathbb{E}\int_Q \theta_\varepsilon^{-2}\varphi^{-2}Y_\varepsilon^2\,dx dt\\
& \leq CT\lambda^{-1} \mathbb{E}\int_Q \theta_\varepsilon^{-2} y_\varepsilon^2 \,dx dt+C\lambda^3\mathbb{E}\int_Q \theta^2 \varphi^3 z^2 \,dx dt+C\lambda^{-1}\mathbb{E}\int_Q \theta_\varepsilon^{-2} \varphi^{-1} y_\varepsilon^2 \,dx dt\\
&\;\;\;+C\lambda^{-4}\mathbb{E}\int_{Q}\theta^{-2}\varphi^{-4} u_\varepsilon^2 \,dx dt+C\mathbb{E}\int_{Q}\theta_\varepsilon^{-2} y_\varepsilon^2 \,dx dt+\rho\lambda^{-2}\mathbb{E}\int_Q \theta_\varepsilon^{-2}\varphi^{-2} |\nabla y_\varepsilon|^2 \,dx dt\\
&\;\;\;+C(\rho)\mathbb{E}\int_Q \theta_\varepsilon^{-2} y_\varepsilon^2 \,dx dt+C\lambda^2\mathbb{E}\int_\Sigma \theta^2\varphi^2 z^2 \,d\sigma dt+C\lambda^{-2}\mathbb{E}\int_\Sigma \theta_\varepsilon^{-2}\varphi^{-2} y_\varepsilon^2 \,d\sigma dt.
\end{aligned}
\end{align}
Choosing a small enough $\rho>0$ in \eqref{B12012} and taking a large $\lambda\geq C(T+T^2)$, we find that
 \begin{align}\label{B1230}
 \begin{aligned}
&\;\lambda^{-2}\mathbb{E}\int_Q \theta_\varepsilon^{-2}\varphi^{-2}|\nabla y_\varepsilon|^2  \,dx dt+\lambda^{-2}\mathbb{E}\int_Q \theta_\varepsilon^{-2}\varphi^{-2}Y_\varepsilon^2\,dx dt\\
& \leq C\Bigg[\mathbb{E}\int_Q \theta_\varepsilon^{-2} y_\varepsilon^2 \,dx dt+\lambda^3\mathbb{E}\int_Q \theta^2 \varphi^3 z^2 \,dx dt+\lambda^{-3}\mathbb{E}\int_{Q}\theta^{-2}\varphi^{-3} u_\varepsilon^2 \,dx dt\\
&\qquad\;\,+\lambda^2\mathbb{E}\int_\Sigma \theta^2\varphi^2 z^2 \,d\sigma dt+\lambda^{-1}\mathbb{E}\int_\Sigma \theta_\varepsilon^{-2}\varphi^{-1} y_\varepsilon^2 \,d\sigma dt\Bigg].
\end{aligned}
\end{align}
Combining \eqref{B1230} and \eqref{B801}, we conclude that
\begin{align}\label{B140}
\begin{aligned}
&\;\mathbb{E}\int_Q \theta_\varepsilon^{-2} y_\varepsilon^2 \,dxdt+\lambda^{-3}\mathbb{E}\int_{Q}\theta^{-2}\varphi^{-3} u_\varepsilon^2\,dxdt\\
&+ \lambda^{-1}\mathbb{E}\int_\Sigma \theta^{-2}_\varepsilon\varphi^{-1} y_\varepsilon^2 \,d\sigma dt+\lambda^{-2}\mathbb{E}\int_Q \theta_\varepsilon^{-2}\varphi^{-2}|\nabla y_\varepsilon|^2  \,dx dt\\
&+\lambda^{-2}\mathbb{E}\int_Q \theta_\varepsilon^{-2}\varphi^{-2}Y_\varepsilon^2\,dx dt+ \frac{1}{\varepsilon}\mathbb{E}\int_G |y_\varepsilon(0)|^2\,dx\\
&\leq C\Bigg[\lambda^3\mathbb{E}\int_Q \theta^2\varphi^3 z^2 \,dx dt+\lambda^2\mathbb{E}\int_\Sigma \theta^2\varphi^2 z^2 \,d\sigma dt\Bigg].
\end{aligned}
\end{align}
From \eqref{B140}, we deduce that there exists  
$$(\widehat{u},\widehat{y},\widehat{Y})\in L^2_\mathcal{F}(0,T;L^2(G_0))\times L^2_\mathcal{F}(0,T;H^1(G))\times L^2_\mathcal{F}(0,T;L^2(G)),$$
such that as $\varepsilon\rightarrow0$,
\begin{align}\label{B150}
\begin{aligned}
&u_\varepsilon \longrightarrow \widehat{u},\,\,\,\textnormal{weakly in}\,\,\,L^2((0,T)\times\Omega;L^2(G));\\
&y_\varepsilon \longrightarrow \widehat{y},\,\,\,\textnormal{weakly in}\,\,\,L^2((0,T)\times\Omega;H^1(G));\\ 
&Y_{\varepsilon} \longrightarrow \widehat{Y},\,\,\,\textnormal{weakly in}\,\,\,L^2((0,T)\times\Omega;L^2(G)).
\end{aligned}
\end{align}
We now prove that $(\widehat{y},\widehat{Y})$ is the solution of equation \eqref{3.6} associated with the control $\widehat{u}$. For that, let us denote by $(\widetilde{y},\widetilde{Y})$ the unique solution of \eqref{3.6} with the control $\widehat{u}$. For any processes $f,g\in L^2_\mathcal{F}(0,T;L^2(G))$, let $\phi$ be the solution of the following forward stochastic parabolic equation
\begin{equation*}
\begin{cases}
\begin{array}{lcl}
d\phi-\nabla\cdot(\mathcal{A}\nabla\phi) \,dt = f \,dt+g \,dW(t) &\textnormal{in}& Q,\\
\partial^\mathcal{A}_\nu \phi=0&\textnormal{on}&\Sigma,\\
					\phi(0)=0&\textnormal{in}&G.
				\end{array}
			\end{cases}
		\end{equation*}
Calculating ``$d(\phi y_\varepsilon)-d(\phi\widetilde{y})$", integrating the equality on $Q$, taking the expectation on both sides and letting $\varepsilon\rightarrow0$, we end up with
\begin{equation*}
\mathbb{E}\int_Q (\widehat{y}-\widetilde{y})f \,dxdt+\mathbb{E}\int_Q (\widehat{Y}-\widetilde{Y})g \,dxdt= 0.
\end{equation*}
Then it follows that $\widehat{y}=\widetilde{y}$ in $Q$, $\mathbb{P}\textnormal{-a.s.}$, and $\widehat{Y}=\widetilde{Y}$ in $Q$, $\mathbb{P}\textnormal{-a.s.}$ Thus, $(\widehat{y},\widehat{Y})$ is the solution of \eqref{3.6} with the control $\widehat{u}$. Finally, by combining the uniform estimate \eqref{B140} and the weak convergence result \eqref{B150}, we conclude the proof of Proposition \ref{prop3.1}.
\end{proof}

\section*{Acknowledgments}
The authors would like to thank Professor Qi L{\"u} for fruitful discussions and helpful comments.

\end{document}